\newtheorem{theorem}{Theorem}
\newtheorem{corollary}[theorem]{Corollary}
\newtheorem{lemma}[theorem]{Lemma}
\newtheorem{proposition}[theorem]{Proposition}
\theoremstyle{nonumberplain}
\newtheorem{proof}{Proof}
\newcommand{\leqnomode}{\tagsleft@true}
\newcommand{\reqnomode}{\tagsleft@false}
\newcommand\PBS[1]{\let\temp=\\%
  #1%
  \let\\=\temp
}
\DeclareMathOperator{\sgn}{sgn}
\DeclareMathOperator{\Var}{Var} 
\DeclareMathOperator{\Cov}{Cov}
\DeclareMathOperator{\Cor}{Cor}
\let\P\relax 
\DeclareMathOperator{\P}{\mathbb{P}} 
\DeclareMathOperator{\E}{\mathbb{E}}
\def\R{\mathbb{R}}
\newcommand{\1}{\mbox{1\hspace{-0.28em}I}}
\newcommand\numberthis{\addtocounter{equation}{1}\tag{\theequation}}
\title{On the Dependence between Functions of \\Quantile and Dispersion Estimators} 
\author{\Large Marcel Br\"autigam$\dagger$ $\ddagger^{\;\ast}$ and Marie Kratz$\dagger^{\;\ast}$\\[1ex]
\small $\dagger$ ESSEC Business School Paris, CREAR \\ \small $\ddagger$ Sorbone University, LPSM \\ \small $^{\;\ast}$ LabEx MME-DII}
\date{}
\begin{document}

\maketitle

\begin{abstract}
\noindent
In this paper, we derive the joint asymptotic distributions of functions of quantile estimators (the non-parametric sample quantile and the parametric location-scale quantile estimator) with functions of measure of dispersion estimators (the sample variance, sample mean absolute deviation, sample median absolute deviation) - assuming an underlying identically and independently distributed sample. We also discuss the conditions required by the use of such estimators.
Further, we show that these results can be extended to any higher order absolute central sample moment as measure of dispersion. 
Aware of the difference in speed of convergence of the two quantile estimators, we compare the impact of the choice of the quantile estimator (and measure of dispersion) on the asymptotic correlations.
Then we prove a scaling law for the asymptotic dependence of quantile estimators with measure of dispersion estimators. 
Finally, we show a good finite sample performance of the asymptotics in simulations for elliptical distributions. 
All the results should constitute an important and useful complement in the statistical literature as those estimators are either of standard use in statistics and application fields, or should become as such because of weaker conditions in the asymptotic theorems.

\bigskip
\noindent {\emph 2010 AMS classification}: 60F05; 62H10; 62H20\\
{\emph JEL classification}: C13; C14; C30\\[1ex] 
\noindent\textit{Keywords}: asymptotic distribution; central limit theorem; correlation; location-scale; measure of dispersion; non-linear dependence; sample mean absolute deviation; sample median absolute deviation; sample quantile; sample variance; scaling law
\end{abstract}
%


\newpage

\section{Introduction and Notation}
\label{sec:Intro}

The joint asymptotic distribution between a sample quantile and a measure of location estimator, for an identically and independently distributed (iid) sample, has been considered in the literature for two location estimators, the sample median and the sample mean, respectively. Note that the case of the sample median (sample quantile itself) is included in the well-known asymptotics of a vector of sample quantiles.
The sample mean case was treated by \cite{Lin80} and later, using another approach, by \cite{Ferguson99}. These latter results have then been used by \cite{Bera16} to introduce a new characterization and hence also test for the normal distribution.
In this paper, we want to move from measures of location to measures of dispersion and present joint asymptotics for their functions with functions of quantile estimators, offering then a useful complement in the statistical literature.

As quantile estimators we consider, apart from the (non-parametric) sample quantile, also the parametric location-scale quantile estimator. The interest in considering two quantile estimators lies in the fact that, although being both consistent estimators, they have different speeds of convergence. Consequently, this impacts the joint asymptotic dependence with the measure of dispersion estimators. By measures of dispersion we mean well-known quantities as the variance or standard deviation, but also less frequently used ones as, for instance, the mean absolute deviation around the mean (MAD) or median absolute deviation around the median (MedianAD). The latter two have the advantage of relaxing the asymptotic constraints coming with the use of the sample variance (such as the existence of the fourth moment of the underlying distribution). For a more general and historical overview of measures of dispersion, we refer e.g. to \cite{David98}. 

Joint asymptotics between quantile estimators and measure of dispersion estimators have not yet been considered in full generality in the literature. Nevertheless a few examples exist. For instance, for symmetric location-scale distributions, the MedianAD equals the interquartile range (IQR), see e.g~\cite{Hampel74}, and their sample estimators are asymptotically equivalent, as shown in~\cite{Hall85}. It can be seen as a first contribution on joint asymptotics of quantile estimators and measure of dispersion estimators. Further, under some symmetry-type conditions on the underlying distribution, Falk proved  in \cite{Falk97} the asymptotic independence of the sample median (sample quantile of order 0.5) and the sample MedianAD.
In \cite{DasGupta06}, the joint asymptotics between the interquartile range and the standard deviation are shown.
An extension to higher moments has been given in \cite{Bos13}, where the authors provide the asymptotic joint distribution of the sample quantile with the r-th absolute sample moment in the case of a Gaussian distribution.
These results of \cite{Hall85}, \cite{Falk97}, \cite{DasGupta06}, \cite{Bos13} can be seen as special cases of our treatment of joint asymptotic distributions of functions of quantile estimators and functions of measure of dispersion estimators. 
%
In addition, while the results presented only hold for quantiles of order $p \in (0,1)$, one can see numerically that for $p$ tending to 0 or 1, we recover the asymptotic independence of the sample maximum/minimum and the measures of dispersion. The asymptotic independence of the sample maximum/minimum with the sample variance goes back to \cite{Loynes90} (and with the sample mean to \cite{Chow78}). 
For the MAD and the MedianAD no such results exist, to the best of our knowledge (for symmetric distributions, the sample MedianAD is asymptotically equivalent to half the sample IQR, thus the asymptotic independence in this case can be explained by the corresponding results on order statistics, see e.g. \cite{Falk88}).

Those asymptotic results should be of great use for applications in statistics or other application fields.
Note that the theoretical question arised from previous studies in financial risk management, one (see \cite{Brautigam18_WP}), where the correlation between a log-ratio of sample quantiles with the sample standard deviation is measured using log-returns from different stock indices, the other (see \cite{Zumbach18} and \cite{Zumbach12}) considering the correlation of ‘the realized volatilities with the centred volatility increment’ for different underlying processes.
A further application of the results presented in this paper may be  found in risk measure estimation: The sample quantile can be seen as a Value-at-Risk estimator and the functional framework allows us to extend the results to Expected Shortfall. 

The structure of the paper is as follows. We present in Section~\ref{sec:asympt_results} the main results about the asymptotic joint distribution and dependence between functions of quantile estimators (the sample quantile and the parametric location scale quantile) and functions of three different measures of dispersion (sample variance, sample MAD and sample MedianAD). 
Further, we analyse the effect of the sample size, when the quantile estimator and the measure of dispersion estimator have different sample sizes (in an asymptotic sense). 
We conclude the section discussing the conditions on the underlying distribution needed in the theorems and outline the methods used for proving the results. 
In Section~\ref{sec:examples}, we focus on two different applications.
First, we analyse the difference in the asymptotics depending on the choice of the quantile estimator and the measure of dispersion. 
Therefore, we compare the asymptotic correlations with the sample quantile versus the parametric location-scale quantile estimator (for each corresponding measure of dispersion). We discuss this in the cases of the two main elliptical distributions, the Gaussian and Student ones.
Second, we evaluate the finite sample approximation of the theoretical asymptotics: In a simulation study, we compare the sample correlation between quantile estimators and measure of dispersion estimators (each on a finite sample) to the theoretical asymptotic correlation - considering elliptical distributions with light and heavy tails, respectively.
We conclude in Section~\ref{sec:conclusio}.
Extensions of the main theorem and proofs are deferred to the Appendix.

{\sf \bf \large Notation}

Let $(X_1,\cdots,X_n)$ be a sample of size $n$, with parent random variable (rv) $X$, parent cumulative distribution function (cdf) $F_X$, (and, given they exist,) probability density function (pdf) $f_X$, mean $\mu$, variance $\sigma^2$, and quantile of order $p$ defined as $q_X(p):= \inf \{ x \in \R: F_X(x) \geq p \}$.
We denote its ordered sample by $X_{(1)}\leq ...\leq X_{(n)}$.
Whenever it exists, we introduce the standardised version (with mean 0 and variance 1) of $X$, namely $Y := \frac{X-\mu}{\sigma}$, and correspondingly the cdf, pdf and quantile of order $p$ as $F_Y$, $f_Y$ and $q_Y (p)$. 
In the special case of the standard normal distribution $\mathcal{N}(0,1)$, we use the standard notation $\Phi,\phi, \Phi^{-1} (p)$ for the cdf, pdf and quantile of order $p$, respectively.
We use the symbol $\sim$ for `distributed as', e.g. $Y \sim \mathcal{N}(0,1)$ means that $Y$ is $\mathcal{N}(0,1)$-distributed.

In this paper, we focus on the following five estimators.
First, we consider three estimators of the dispersion: \eqref{eq:D_in_sigma_n_sq}~the sample variance $\hat{\sigma}_n^2$, \eqref{eq:D_in_theta_n}~the sample mean absolute deviation around the sample mean (MAD) $\hat{\theta}_n$, and \eqref{eq:D_in_xi_n}~the sample median absolute deviation around the sample median (MedianAD) $ \hat{\xi}_n$. We introduce a unified notation for them:

\vspace*{-0.7cm}
\begin{minipage}{0.35\textwidth}
\begin{equation*}
D_{i}=  \begin{cases} \sigma^2 & \text{~for~} i=1,
\\ \theta & \text{~for~} i=2,
\\ \xi  & \text{~for~} i=3,
\end{cases}
 \text{~~and estimators~~~~~}
\end{equation*}
\end{minipage}
\begin{minipage}{0.65\textwidth}
\begin{numcases}{\hat{D}_{i,n}= } 
\hat{\sigma}_n^2:= \frac{1}{n-1} \sum_{j=1}^n (X_j - \bar{X}_n)^2, & for $i=1$,~ \label{eq:D_in_sigma_n_sq}
\\ \hat{\theta}_n := \frac{1}{n} \sum_{j=1}^n \lvert X_j - \bar{X}_n \rvert,  &for $i=2$,~ \label{eq:D_in_theta_n}
\\  \hat{\xi}_n:=  \frac{1}{2} (W_{(\lfloor \frac{n+1}{2}  \rfloor)} + W_{(\lfloor \frac{n+2}{2}  \rfloor)}), & for $i=3$,~ \label{eq:D_in_xi_n}
\end{numcases}
\end{minipage}

where $\bar{X}_n = \frac{1}{n} \sum_{j=1}^n X_j$, $W_j = \lvert X_j - \hat{\nu}_n \rvert, j=1,...,n$, and $\hat{\nu}_n = \frac{1}{2} (X_{(\lfloor \frac{n+1}{2}  \rfloor)} + X_{(\lfloor \frac{n+2}{2}  \rfloor)})$ (sample median of the original sample).
By $\lceil x \rceil =   \min{ \{ m \in \mathbb{Z}  : m \geq x \} }$, $\lfloor x \rfloor =   \max{ \{ m \in \mathbb{Z}  : m \leq x \} }$, we denote the rounded-up and rounded-off integer-parts of a real number $x \in \R$, respectively.

Note that we chose $\hat{\theta}_n$ with a factor of $\frac{1}{n}$ instead of $\frac{1}{n-1}$ to be in line with the literature (see e.g.~\cite{Gorard05},\cite{Pham01},\cite{Segers14}), and since it does not matter asymptotically.

Then, we consider two quantile estimators. 
Again, to have a unified notation, we introduce the quantile estimator $\hat{q}_n$ that may represent either the sample quantile $q_n$, or the location-scale quantile estimators $q_{n,\hat{\mu}, \hat{\sigma}}$ (or $q_{n, \hat{\sigma}}$ for $\mu$ known) whenever $F_X$ belongs to the location-scale family of distributions. Those estimators are defined as follows, for any order $p$,
\begin{align}
 q_n (p) &= X_{( \lceil np \rceil )},   \label{eq:qn}
\\ q_{n,\hat{\mu},\hat{\sigma}} (p) &= \hat{\mu}_n + \hat{\sigma}_n q_Y (p) \text{~~(and~} q_{n,\hat{\sigma}} (p) = \mu + \hat{\sigma}_n q_Y(p), \text{~respectively)}, \label{eq:q_mu_sigma}
\end{align}
where $\hat{\mu}_n$ and $\hat{\sigma}_n$ (by abuse of notation) are any estimators of the mean $\mu$ and standard deviation $\sigma$ . We choose them to be the sample mean $\bar{X}_n$ and the square root of the sample variance $\sqrt{\hat{\sigma}_n^2}$, respectively.

Recall that the location-scale family of distributions $\mathcal{F}$
is the class of distributions such that 
\begin{equation}\label{eq:loc-scale}
\text{if~} F \in \mathcal{F}, \,\text{then for any~} a\in \mathbb{R}, 0<b<\infty, ~G(x) := F(ax+b) \in \mathcal{F}.
\end{equation}
In addition,  to be consistent in the notation with related results in the literature, we generalise a notation used in \cite{Bera16} and \cite{Ferguson99}:
Assuming that the underlying rv $X$ has finite moments up to order $l$, and that $\eta$ is a continuous real-valued function, we set, for $1\leq k\leq l$ and $p \in (0,1)$,
\begin{align}
\tau_{k} (\eta(X),p) &= 
 (1-p) \left( \E[\eta^k (X) \vert X > q_X (p)] - \E[\eta^k(X)]\right) \label{eq:def-tau} 
\\ &= p (1-p) \left( \E[\eta^k (X) \vert X > q_X (p)] - \E[\eta^k(X) \vert X < q_X(p)]\right), \label{eq:def-tau2} 
\end{align} 
where the expression \eqref{eq:def-tau2} points out that this quantity involves the truncated moments of both tails.
When $\eta$ is the identity function, we abbreviate $\tau_k(X,p)$ as $\tau_k(p)$.

\vspace*{-.5cm}
Finally, the signum function is denoted by $\sgn$ and defined, as usual, by $\sgn(x) := \begin{cases}
-1 & \text{if } x < 0, \\
0 & \text{if } x = 0, \\
1 & \text{if } x > 0, \end{cases}$
and the standard notations $\overset{d}\rightarrow$ and $\overset{P}\rightarrow$ correspond to the convergence in distribution and in probability, respectively. Further, for a sequence of random variables $X_n$ and constants $a_n$, we denote by $X_n = o_P(a_n)$ the convergence in probability to 0 of $X_n/a_n$.
%

\section{Asymptotic Joint Properties of Quantile and Dispersion Estimators for iid rv's}
\label{sec:asympt_results}

\vspace{-1ex}
Let us present the main results, either with the sample quantile as quantile estimator (for any iid sample), or the location-scale quantile estimator (when considering location-scale distributions). Note that we refer to 'historical estimation' when estimating the quantile with the sample quantile,  as it is evaluated on the historical data sample.
In both cases, the asymptotic properties are subject to some smoothness and moment conditions.

We introduce on purpose two different ways of estimating the quantile
as it has some impact on their asymptotic covariance and correlation with the corresponding measure of dispersion (sample variance, sample MAD or sample MedianAD), thus in practice too.
Although both quantile estimators converge to the same quantity, the theoretical quantile, they do not have the same rate of convergence.
Using the location-scale quantile estimator, we obtain a better rate of convergence than with the historical estimation, as expected. Hence the interest of investigating this second way of estimation, to see how this impacts the dependence structure. 

Before stating the main results, let us present the different conditions the underlying random variable $X$ needs to fulfil. Depending on the choice of quantile estimator and measure of dispersion estimator, we impose three different types of conditions:
The existence of a finite $2k$-th moment for any integer $k>0$, 
the continuity or $l$-fold differentiability of the distribution function $F_X$ (at a given point or neighbourhood) for any integer $l> 0$, 
and the positivity of the density (at a given point or neighbourhood). Those conditions are named as:
\begin{align*}
&(M_K) &&\E[X^{2k}] < \infty, 
\\ &(C_0) && F_X \text{~is continuous}, 
\\ \phantom{text to make the distance less} &(C_l^{~'}) &&F_X \text{~is~} l\text{-times differentiable,} \phantom{text to make the distance between \& and \& \& less} 
\\ &(P) &&f_X \text{~is positive.} 
\end{align*}
Note that a standard condition often stated in the literature is $F_X$ to be absolutely continuous and strictly monotonically increasing. Clearly, this latter requirement is more general than our conditions $(C_0)$, $(P)$ almost everywhere. 

Finally, to have results as general as possible (in view of statistical applications), all along the paper we consider functions $h_1,h_2$ of the estimators that we assume to be continuous real-valued functions with existing derivatives denoted by $h_1'$ and $h_2'$ respectively. Note that in fact, to apply the Delta method, it suffices for the derivatives to exist only at the point where they are evaluated at. We will omit recalling it in the conditions of the results.

\subsection{Historical Estimation}
\label{ssec:hist-estim}

We present two results for the relation of functions of the sample quantile with functions of sample estimators of dispersion measures, one for the MAD and the variance, the other for the MedianAD. Both results provide the bivariate asymptotic normality of the estimators, but require a different mathematical framework to handle the dependence between the estimators. 
%
\begin{theorem} \label{th:Q-sigma}
Consider an iid sample with parent rv $X$ having existing (unknown) mean $\mu$ and variance $\sigma^2$. 
Assume conditions $(C_1^{~'}), (P)$ at $q_X(p)$ each, $(M_r)$ for $r=1,2$ respectively, as well as $(P)$ at $\mu$ for $r=1$. 
Then the joint behaviour of the functions $h_1$ of the sample quantile $q_n(p)$ (defined in \eqref{eq:qn}), for $p \in (0,1)$, and $h_2$ of the sample measure of dispersion $\hat{D}_{r,n}$ (defined in \eqref{eq:D_in_sigma_n_sq} or~\eqref{eq:D_in_theta_n}, for $r=1$ and $2$ respectively), is asymptotically normal:
\begin{equation}\label{eq:cdf-Q-sigma}
\sqrt{n} \, \begin{pmatrix} h_1(q_n (p)) - h_1(q_X(p)) \\ h_2(\hat{D}_{r,n})  - h_2(D_{r}) \end{pmatrix} \; \underset{n\to\infty}{\overset{d}{\longrightarrow}} \; \mathcal{N}(0, \Sigma^{(r)}), 
\end{equation}
where the asymptotic covariance matrix $\displaystyle \Sigma^{(r)}=(\Sigma^{(r)}_{ij}, 1\le i,j\le 2)$ satisfies 
\begin{align}
\Sigma^{(r)}_{11}&=\frac{p(1-p)}{f_X^2(q_X(p))} \,\left(h_1'(q_X(p))\right)^2 ; \quad  \Sigma^{(r)}_{22}=\left(h_2'(D_r)\right)^2 \, \Var\left(\lvert X - \mu \rvert^r + (2-r)(2F_X(\mu)-1)X \right) ;  \label{eq:cov-hist-general1}
\\  \Sigma^{(r)}_{12}&= \Sigma^{(r)}_{21} =
h_1'(q_X(p)) \,h_2'(D_r) \times \frac{\tau_r (\lvert X - \mu \rvert,p) +(2-r)(2F_X(\mu)-1) \tau_1 (p)}{f_X(q_X(p))},  \label{eq:cov-hist-general2}
\end{align}
$\tau_r$ being defined in \eqref{eq:def-tau}.

The asymptotic correlation between the functional $h_1$ of  the sample quantile and the functional $h_2$ of the measure of dispersion is - up to its sign $a_{\pm} = \sgn( \,h_1'(q_X(p)) \times h_2'(D_r))$ - the same whatever the choice of $h_1,h_2$:
\begin{equation} \label{eq:cor-hist-general}
\!\!\! \lim_{n \rightarrow \infty} \Cor\left(h_1(q_n(p)),h_2( \hat{D}_{r,n})\right) =  a_{\pm} \times \frac{\tau_r (\lvert X - \mu \rvert,p)  +(2-r)(2F_X(\mu)-1) \tau_1 (p)}{\sqrt{ p(1-p) \Var\left(\lvert X - \mu \rvert^r + (2-r) (2F_X(\mu)-1)X \right)}}.
\end{equation} 
\end{theorem}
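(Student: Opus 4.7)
The plan is to reduce the joint behaviour of the two estimators to a single sum of iid random vectors via asymptotic linearisations, apply the multivariate CLT, and then invoke the Delta method. Under $(C_1^{~'})$ and $(P)$ at $q_X(p)$, the sample quantile admits the standard Bahadur representation
\[
q_n(p)-q_X(p)=-\frac{1}{f_X(q_X(p))}\cdot\frac{1}{n}\sum_{j=1}^{n}\bigl(\mathbf 1_{\{X_j\le q_X(p)\}}-p\bigr)+o_P(n^{-1/2}).
\]
For the sample variance ($r=2$), writing $\hat\sigma_n^2-\sigma^2=\tfrac1n\sum_j[(X_j-\mu)^2-\sigma^2]-\tfrac{n}{n-1}(\bar X_n-\mu)^2+O(1/n)$ together with $\bar X_n-\mu=O_P(n^{-1/2})$ yields at once, under $(M_2)$, an iid linearisation with summand $(X_j-\mu)^2-\sigma^2$ -- consistent with the factor $(2-r)=0$ in the statement.

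The delicate step is the sample MAD ($r=1$), since $|X_j-\bar X_n|$ is not smooth in $\bar X_n$. Under $(P)$ at $\mu$, the function $t\mapsto\E[|X-t|]$ is differentiable at $t=\mu$ with derivative $2F_X(\mu)-1$. A stochastic-equicontinuity (or convexity) argument then gives
\[
\hat\theta_n-\theta=\frac{1}{n}\sum_{j=1}^{n}\Bigl[\lvert X_j-\mu\rvert-\theta+(2F_X(\mu)-1)(X_j-\mu)\Bigr]+o_P(n^{-1/2}),
\]
requiring only $(M_1)$ and $(P)$ at $\mu$. The $(\bar X_n-\mu)$-correction here is the source of the $(2-r)$ term in \eqref{eq:cov-hist-general1}--\eqref{eq:cov-hist-general2}. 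I expect this linearisation to be the main obstacle: the non-differentiability of the absolute value precludes a direct pointwise Taylor expansion, and it is the step that explains why $(P)$ at $\mu$ is needed only for $r=1$.

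Combining the two linearisations, $\sqrt n\bigl(q_n(p)-q_X(p),\;\hat D_{r,n}-D_r\bigr)^T=n^{-1/2}\sum_j Z_j+o_P(1)$ for an iid bivariate sequence $(Z_j)$; the bivariate CLT then yields joint asymptotic normality, and the Delta method applied to $(h_1,h_2)$ produces \eqref{eq:cdf-Q-sigma}. The entries of $\Sigma^{(r)}$ follow by direct covariance computation: $\Sigma^{(r)}_{11}$ from $\Var(\mathbf 1_{\{X\le q_X(p)\}})=p(1-p)$, and $\Sigma^{(r)}_{22}$ by reading off the linearisation of $\hat D_{r,n}$. For the off-diagonal entry, I would use the identity
\[
\Cov\bigl(\mathbf 1_{\{X\le q_X(p)\}},\,g(X)\bigr)=-(1-p)\bigl(\E[g(X)\mid X>q_X(p)]-\E[g(X)]\bigr)=-\tau_1(g(X),p),
\]
applied to $g(X)=|X-\mu|^r+(2-r)(2F_X(\mu)-1)X$; combined with the factor $-1/f_X(q_X(p))$ from the Bahadur representation and the Delta-method multipliers $h_1'(q_X(p))h_2'(D_r)$, this produces \eqref{eq:cov-hist-general2}. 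The correlation formula \eqref{eq:cor-hist-general} is then obtained by division, the sign $a_\pm$ and its invariance under the choice of $(h_1,h_2)$ arising because the derivatives enter only as common scalar factors in numerator and denominator.
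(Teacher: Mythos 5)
Your proposal follows essentially the same route as the paper's own proof: the Ghosh--Bahadur representation for $q_n(p)$, the exact identity $\hat\sigma_n^2$ versus $\tfrac1n\sum(X_j-\mu)^2$ plus a negligible $(\bar X_n-\mu)^2$ term for $r=2$, the Segers-type linearisation of the sample MAD with the $(2F_X(\mu)-1)(\bar X_n-\mu)$ correction for $r=1$, then the bivariate CLT, the truncated-moment identity for the covariance of the indicator with $g(X)$, and the Delta method. The only cosmetic difference is that you compute the covariance with the combined summand $g(X)=\lvert X-\mu\rvert^r+(2-r)(2F_X(\mu)-1)X$ in one step, whereas the paper treats the known-mean case first and adds the mean-estimation correction separately via Ferguson's covariance of $\bar X_n$ with $q_n(p)$; the results agree.
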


Note that the choice of estimator for the measure of dispersion (sample variance, $r=2$, or sample MAD, $r=1$) has an impact on the required existence of moments of $X$. Indeed, for $r=2$, we require the existence of the fourth moment of $X$, while, for $r=1$, only a finite second moment.

Turning to the case with the sample MedianAD, the different dependence structure appears clearly in the expressions of the covariance and correlation when compared to Theorem~\ref{th:Q-sigma} (e.g. involving maxima - something we do not have in Theorem~\ref{th:Q-sigma}).
\begin{theorem} \label{thm:Q-MedianAD}
Consider an iid sample with parent rv $X$ with (unknown) median $\nu$, MedianAD $\xi$ and, if existing, mean $\mu$ and variance $\sigma^2$.
Assume conditions $(C_0)$ in neighbourhoods of $\nu \pm \xi$, $(C_1^{~'})$ at $q_X(p), \nu$ and $\nu \pm \xi$, and $(P)$ at $\nu, q_X(p)$ and at least at one of $\nu\pm \xi$ each. Then
the joint behaviour of the functions $h_1$ of the sample quantile $q_{n}(p)$ (for $p \in (0,1)$) and $h_2$ of the sample MedianAD $\hat{\xi}_n$ (defined in \eqref{eq:D_in_xi_n} or Table~\ref{tbl-notation}) is asymptotically normal:
\[ \sqrt{n} \, \begin{pmatrix} h_1(q_n (p)) - h_1(q_X(p)) \\ h_2(\hat{\xi}_n)  - h_2(\xi) \end{pmatrix} \; \underset{n\to\infty}{\overset{d}{\longrightarrow}} \; \mathcal{N}(0, \Gamma), \]
where the asymptotic covariance matrix $\displaystyle \Gamma=(\Gamma_{ij}, 1\le i,j\le 2)$ satisfies 
\begin{align} 
\Gamma_{11}&=\frac{p(1-p)}{f_X^2(q_X(p))} \,\left( h_1'(q_X(p))\right)^2 ; \qquad  
\Gamma_{22}=\frac{1+ \gamma / f_X^2(\nu)}{4(f_X(\nu + \xi) + f_X(\nu-\xi))^2}\,\left(h_2'(\xi)\right)^2;   \label{eq:cov-hist-general1-MedianAD}
\\  \Gamma_{12}&= \Gamma_{21} = \, h_1'(q_X(p)) \,h_2'(\xi) \times
\label{eq:cov-hist-general2-MedianAD}
\\ & 
\frac{- \max{\left(0, \,F_X(\nu +\xi) - \max{\left( F_X(\nu - \xi),p\right)}\right)} + \frac{1-p}{2} + \frac{f_X(\nu+ \xi) - f_X(\nu -\xi)}{f_X(\nu)} \max{\left(-\frac{p}{2},\frac{p-1}{2} \right)} }{ f_X(q_X(p))\, \left(f_X(\nu + \xi) + f_X(\nu-\xi)\right)} \nonumber
\end{align}
with $\gamma := \left(f_X(\nu+ \xi) - f_X(\nu -\xi)\right) f_X(\nu)\,\left(f_X(\nu+ \xi) - f_X(\nu -\xi) -4\right) \, \left(1-F_X(\nu - \xi) - F_X(\nu + \xi)\right).$

The asymptotic correlation between the two functions 
is - up to its sign $a_{\pm} := \sgn(\,h_1'(q_X(p)) \,h_2'(\xi))$ - the same whatever the choice of $h_1,h_2$: 
$\displaystyle \quad \lim_{n \rightarrow \infty} \Cor\left( h_1(q_n(p)), h_2(\hat{\xi}_n)\right) = a_{\pm} \times $
\begin{align} 
 & \frac{- \max{\left(0, F_X(\nu +\xi) - \max{\left(F_X(\nu - \xi),p\right)}\right)} + \frac{1-p}{2} + \frac{f_X(\nu+ \xi) - f_X(\nu -\xi)}{f_X(\nu)} \max{\left(-\frac{p}{2}, \frac{p-1}{2} \right)}}{\sqrt{\frac{p(1-p)}{4}} \,\sqrt{1 + \frac{\gamma}{f_X^2(\nu)} }} \, .  \label{eq:cor-functional-sample-quant-MedianAD}
\end{align} 
\end{theorem}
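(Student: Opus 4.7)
The strategy is to linearize each of $q_n(p)$ and $\hat{\xi}_n$ as an asymptotically linear functional of the empirical cdf $\hat{F}_n$, apply a multivariate CLT to the resulting averages, and transfer the joint normality from $(q_n(p), \hat{\xi}_n)$ to $(h_1(q_n(p)), h_2(\hat{\xi}_n))$ via the Delta method. Since $h_1, h_2$ are differentiable at $q_X(p)$ and $\xi$ respectively, the factors $h_1'(q_X(p))$ and $h_2'(\xi)$ appearing throughout $\Gamma$ are produced entirely by the Delta step, which is why the correlation~\eqref{eq:cor-functional-sample-quant-MedianAD} depends on $(h_1,h_2)$ only through the sign $a_\pm$.

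\textbf{Linearizations.} For the sample quantile, the classical Bahadur representation gives, under $(C_1^{~'})$ and $(P)$ at $q_X(p)$, $\sqrt{n}(q_n(p)-q_X(p)) = \sqrt{n}\,(p-\hat{F}_n(q_X(p)))/f_X(q_X(p)) + o_P(1)$, and analogously $\sqrt{n}(\hat{\nu}_n-\nu) = \sqrt{n}\,(1/2-\hat{F}_n(\nu))/f_X(\nu) + o_P(1)$. For $\hat{\xi}_n$, introduce $G_n(\nu_0,y) := \hat{F}_n(\nu_0+y) - \hat{F}_n((\nu_0-y)^-)$ and $G(\nu_0,y) := F_X(\nu_0+y) - F_X(\nu_0-y)$, so that $G(\nu,\xi) = 1/2$ and $G_n(\hat{\nu}_n,\hat{\xi}_n) = 1/2 + O(1/n)$. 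Splitting $G_n(\hat{\nu}_n,\hat{\xi}_n) - G(\nu,\xi)$ into the deterministic increment $G(\hat{\nu}_n,\hat{\xi}_n) - G(\nu,\xi)$ plus a stochastic remainder, Taylor-expanding the former via $(C_1^{~'})$ at $\nu\pm\xi$, and controlling the latter by stochastic equicontinuity, I obtain after substituting the representation for $\hat{\nu}_n$:
\begin{equation*}
\sqrt{n}(\hat{\xi}_n - \xi) = \frac{\sqrt{n}(1/2 - G_n(\nu,\xi))}{f_X(\nu+\xi)+f_X(\nu-\xi)} \;-\; \frac{f_X(\nu+\xi) - f_X(\nu-\xi)}{f_X(\nu)\,(f_X(\nu+\xi)+f_X(\nu-\xi))}\,\sqrt{n}(1/2 - \hat{F}_n(\nu)) + o_P(1).
\end{equation*}
The requirement $(P)$ at at least one of $\nu\pm\xi$ is exactly what makes the denominator nonzero.

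\textbf{CLT and covariance identification.} The pair $(q_n(p),\hat{\xi}_n)$ is now $o_P(1)$-close to a linear combination of the three centred empirical averages $\hat{F}_n(q_X(p)) - p$, $\hat{F}_n(\nu) - 1/2$ and $G_n(\nu,\xi) - 1/2$, so the multivariate CLT for i.i.d.\ indicators together with Slutsky's lemma delivers the claimed joint bivariate normality, with covariances obtained from $n\,\Cov(\hat{F}_n(x),\hat{F}_n(y)) \to F_X(\min(x,y)) - F_X(x)F_X(y)$. The variance expression $\Gamma_{22}$ then follows by collecting terms, the asymmetry-dependent cross-covariances between $\hat{F}_n(\nu)$ and $G_n(\nu,\xi)$ being absorbed into the constant $\gamma$. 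For $\Gamma_{12}$, a three-case distinction on whether $q_X(p) \leq \nu-\xi$, $q_X(p) \in [\nu-\xi,\nu+\xi]$ or $q_X(p) \geq \nu+\xi$ rewrites $\min(p,F_X(\nu+\xi)) - \min(p,F_X(\nu-\xi)) - p/2$ as $(1-p)/2 - \max(0, F_X(\nu+\xi) - \max(F_X(\nu-\xi), p))$, while $\min(p,1/2) - p/2 = -\max(-p/2,(p-1)/2)$ by the split $p \lessgtr 1/2$; feeding these into the coefficients dictated by the linearization yields~\eqref{eq:cov-hist-general2-MedianAD}.

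\textbf{Main obstacle.} The technical crux is the rigorous linearization of $\hat{\xi}_n$: because $G_n$ is a step function in both arguments and the inner estimator $\hat{\nu}_n$ is itself random, a direct differentiation is inadmissible. One must separate the smooth deterministic part $G$ (expandable to first order by $(C_1^{~'})$ at $\nu\pm\xi$) from the empirical fluctuation $G_n - G$, and show that $\sup_{(\nu_0,y)\in B_n}|(G_n - G)(\nu_0,y) - (G_n - G)(\nu,\xi)| = o_P(n^{-1/2})$ on a shrinking neighbourhood $B_n$ of $(\nu,\xi)$ that contains $(\hat{\nu}_n,\hat{\xi}_n)$ with high probability. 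The indicator class $\{\1(|\cdot - \nu_0| \leq y):(\nu_0,y)\in\R^2\}$ being VC, this stochastic equicontinuity follows from standard empirical-process arguments once continuity $(C_0)$ is imposed on neighbourhoods of $\nu\pm\xi$, which is precisely why the theorem's smoothness hypothesis takes that slightly stronger form there than at the other reference points.
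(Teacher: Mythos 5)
Your proposal is correct and follows essentially the same route as the paper: linearize both estimators as asymptotically linear functionals of the empirical cdf, apply the bivariate CLT to the indicator sums, identify the cross-covariance by a case distinction on the position of $q_X(p)$ relative to $\nu-\xi$, $\nu$, $\nu+\xi$, and transfer to $(h_1,h_2)$ by the Delta method; your algebraic identities rewriting $\min(p,F_X(\nu\pm\xi))$ and $\min(p,1/2)$ in terms of the max-expressions of \eqref{eq:cov-hist-general2-MedianAD} check out. The one genuine difference is how the linearization of $\hat{\xi}_n$ is obtained: the paper simply imports the Bahadur representation of the sample MedianAD from Mazumder (2009) (itself a refinement of Hall and Welsh), whereas you sketch a self-contained derivation by splitting $G_n(\hat\nu_n,\hat\xi_n)-G(\nu,\xi)$ into a smooth deterministic increment and an empirical fluctuation controlled by stochastic equicontinuity over the VC class of intervals. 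Your route is more self-contained and makes transparent why the hypotheses take the form they do (in particular why $(C_0)$ is needed on neighbourhoods of $\nu\pm\xi$ and $(P)$ at only one of $\nu\pm\xi$), at the cost of an empirical-process argument the paper avoids by citation; the covariance bookkeeping downstream is identical in both treatments.
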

Two remarks can be made with respect to the result presented. First, the asymptotic dependence with the sample MedianAD does not even require a finite mean. Second, for symmetric distributions, it holds that $f_X(\nu+ \xi) = f_X(\nu -\xi)$ and $\gamma=0$, so the expressions of the asymptotic covariance matrix $\Gamma$ and \eqref{eq:cor-functional-sample-quant-MedianAD} simplify a lot: Then the asymptotic correlation is independent of the specific underlying distribution (see \cite{Brautigam19_iid}).

\subsection{Location-Scale Quantile} \label{ssec:loc-scale-estim}

As a comparison to using historical estimation via sample quantiles (denoted by $q_n$), let us estimate the quantile via the known analytical formula for the quantile of the model, considering a given location-scale distribution with unknown but finite mean $\mu$ and variance $\sigma^2$ as defined in \eqref{eq:loc-scale}.
Consequently, we can write the quantile of order $p$ in such cases as
\begin{equation}\label{eq:q-loc-scale}
q_X (p) = \mu + \sigma q_Y (p),
\end{equation}
where $Y$ is the corresponding rv with standardised distribution having mean 0 and variance 1.

Hence, if we estimate $\mu$ by the sample mean $\bar{X}_n$ and $\sigma$ by the square-root of the sample variance, $\sqrt{\hat{\sigma}_n^2}$, the estimator defined in \eqref{eq:q_mu_sigma} (based on \eqref{eq:q-loc-scale}) can be written as
\begin{equation*} \label{eq:q-location-scale-model}
q_{n, \hat{\mu}, \hat{\sigma}} (p) = \bar{X}_n + \hat{\sigma}_n q_Y(p).
\end{equation*}
%
We keep the same structure as in Subsection~\ref{ssec:hist-estim}: First, we present a unified result for the dependence of (here) the location-scale quantile estimator with the sample variance or sample MAD. Then, we present the corresponding result when using the sample MedianAD.
Note that in the case $\mu$ known, the estimator reduces to
$\displaystyle q_{n, \hat{\sigma}} (p) = \mu + \hat{\sigma}_n q_Y(p)$ (as given in \eqref{eq:q_mu_sigma}), and studying the dependence with the dispersion measure estimators becomes simpler; we refer to \cite{Brautigam19_iid} in such a case.

Let us start with presenting the analogon of Theorem~\ref{th:Q-sigma} for functions of the location-scale quantile estimator.
\begin{proposition}\label{prop-q_mu_sigma-general}
Consider an iid sample with parent rv $X$ having existing (unknown) mean $\mu$ and variance $\sigma^2$.
Assume conditions $(M_2)$, as well as $(C_0)$ at $\mu$ if $r=1$.
Then, taking $r=1,2,$ the joint behaviour of the functions $h_1$ of the quantile estimator $q_{n, \hat{\mu}, \hat{\sigma}}(p)$ from a location-scale model (for $p \in (0,1)$) and $h_2$ of the measure of dispersion $\hat{D}_{r,n}$ (defined in \eqref{eq:D_in_sigma_n_sq} or~\eqref{eq:D_in_theta_n}, for $r=1$ and $2$, respectively) is asymptotically normal:
\begin{equation}
\sqrt{n} \, \begin{pmatrix} h_1(q_{n, \hat{\mu}, \hat{\sigma}} (p)) - h_1(q_X(p)) \\ h_2(\hat{D}_{r,n})  - h_2(D_r) \end{pmatrix} \; \underset{n\to\infty}{\overset{d}{\longrightarrow}} \; \mathcal{N}(0, \Lambda^{(r)}), 
\end{equation}
where the asymptotic covariance matrix $\displaystyle \Lambda^{(r)}=(\Lambda^{(r)}_{ij}, 1\le i,j\le 2)$ satisfies 
\begin{align}
\Lambda^{(r)}_{11}&=\sigma^2 \,\left(h_1'(q_X(p))\right)^2 \Big( 1+q_Y(p) \Big( q_Y(p) (\E[Y^4]-1)/4+\E[Y^3] \Big) \Big) ; \quad   \phantom{xxxxxxxxxxxxxx}
\\ \Lambda^{(r)}_{22}&=\left(h_2'(D_r)\right)^2 \, \Var\Big(\lvert X - \mu \rvert^r + (2-r)(2F_X(\mu)-1)X \Big) ;  \label{eq:cov_qsigma_locationscale-general1}
\\  \Lambda^{(r)}_{12}&= \Lambda^{(r)}_{21} = \sigma^{r+1} \,h_1'(q_X(p)) \,h_2'(D_r) \times  \label{eq:cov_qsigma_locationscale-general2} 
\end{align} 
{\small
\[ \hspace*{-1cm}\footnotesize \left(\E[Y^{r+1}] + (2-r) \left( 2 F_Y(0) -1 - 2 \E[\lvert Y \rvert^{r+1} \1_{(Y<0)} ] \right)+ \frac{q_Y(p)}{2}  \Big( \E[\lvert Y \rvert^{r+2}] - \E[\lvert Y \rvert^{r}] + (2-r) (2 F_Y(0)-1) \E[Y^3]\Big) \right).   \]
}
The asymptotic correlation between the functional $h_1$ of  the location-scale quantile estimator and the functional $h_2$ of the measure of dispersion is - up to its sign $a_{\pm} = \sgn( \,h_1'(q_X(p)) \times h_2'(D_r))$ - the same whatever the choice of $h_1,h_2$:
{\small
\begin{align} 
&\lim_{n \rightarrow \infty} \Cor\left(h_1(q_{n, \hat{\mu}, \hat{\sigma}}(p)), h_2(\hat{D}_{r,n}) \right) = a_{\pm} \times 
\label{eq:cor_qsigma_locationscale-general} \\
&  \frac{\E[Y^{r+1}] + (2-r) \Big( 2 F_Y(0) -1 - 2 \E[\lvert Y \rvert^{r+1} \1_{(Y<0)}]  \Big)+ \frac{q_Y(p)}{2}  \Big( \E[\lvert Y \rvert^{r+2}] - \E[\lvert Y \rvert^{r}] + (2-r) (2 F_Y(0)-1) \E[Y^3]\Big) }{\sqrt{\left(1+q_Y(p) \Big(q_Y(p) \frac{\E[Y^4] -1}{4} +  \E[Y^3]\Big) \right)\, \Var\Big(\lvert Y \rvert^r + (2-r) (2F_Y(0)-1) Y\Big)}}. \notag
\end{align}
}
%
\end{proposition}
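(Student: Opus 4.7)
The plan is to reduce each side of the bivariate convergence to a sum of iid random variables, apply the multivariate central limit theorem, and then invoke the Delta method with $(h_1, h_2)$. Contrary to the sample quantile case of Theorem~\ref{th:Q-sigma}, the location-scale quantile estimator $q_{n,\hat\mu,\hat\sigma}(p) = \bar X_n + q_Y(p)\sqrt{\hat\sigma_n^2}$ is purely a smooth function of sample moments, so the entire argument essentially reduces to the standard CLT plus Delta method machinery.

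For the quantile estimator I write $q_{n,\hat\mu,\hat\sigma}(p) - q_X(p) = (\bar X_n - \mu) + q_Y(p)(\hat\sigma_n - \sigma)$, apply the Delta method to the square-root function at $\sigma^2$ to obtain $\hat\sigma_n - \sigma = (\hat\sigma_n^2 - \sigma^2)/(2\sigma) + o_P(n^{-1/2})$, and expand the sample variance as $\hat\sigma_n^2 = \frac{1}{n}\sum_j (X_j-\mu)^2 - (\bar X_n - \mu)^2 + O(1/n)$; the squared mean term is itself $o_P(n^{-1/2})$. This yields
\[ \sqrt n\,\bigl(q_{n,\hat\mu,\hat\sigma}(p) - q_X(p)\bigr) = \frac{1}{\sqrt n}\sum_{j=1}^n A_j + o_P(1), \quad A_j := (X_j-\mu) + \tfrac{q_Y(p)}{2\sigma}\bigl((X_j-\mu)^2-\sigma^2\bigr), \]
with $\Var(A_j) < \infty$ under $(M_2)$. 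The same expansion also gives the $r=2$ linearisation $\sqrt n(\hat\sigma_n^2-\sigma^2) = \frac{1}{\sqrt n}\sum_j ((X_j-\mu)^2-\sigma^2) + o_P(1)$ at once.

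For $r=1$ I rely on the formal identity $|X_j - \bar X_n| = |X_j - \mu| - \sgn(X_j - \mu)(\bar X_n - \mu) + R_j$, where $R_j$ vanishes unless $X_j$ lies between $\mu$ and $\bar X_n$, in which case $|R_j| \leq |\bar X_n - \mu|$. By $(C_0)$ at $\mu$, the probability $F_X(\mu + \varepsilon) - F_X(\mu - \varepsilon)$ tends to $0$ as $\varepsilon \to 0$, so the number of such indices is $o_P(n)$, whence $\frac{1}{n}\sum_j R_j = o_P(n^{-1/2})$. Combined with $\frac{1}{n}\sum_j \sgn(X_j - \mu) \to 1 - 2F_X(\mu)$ in probability and $\bar X_n - \mu = O_P(n^{-1/2})$, this gives
\[ \sqrt n\,(\hat\theta_n - \theta) = \frac{1}{\sqrt n}\sum_{j=1}^n B_j + o_P(1), \quad B_j := |X_j - \mu| - \theta + (2F_X(\mu)-1)(X_j - \mu), \]
while for $r=2$ simply $B_j := (X_j-\mu)^2 - \sigma^2$.

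The bivariate CLT applied to the iid pairs $(A_j, B_j)$ yields joint asymptotic normality of $\sqrt n\,(q_{n,\hat\mu,\hat\sigma}(p) - q_X(p),\; \hat{D}_{r,n} - D_r)$, and a further Delta method with $(x,y) \mapsto (h_1(x), h_2(y))$ at $(q_X(p), D_r)$ produces the stated bivariate normal limit. The entries of $\Lambda^{(r)}$ in \eqref{eq:cov_qsigma_locationscale-general1}--\eqref{eq:cov_qsigma_locationscale-general2} follow from direct computation of $\Var(A_1)$, $\Var(B_1)$ and $\Cov(A_1, B_1)$ after substituting $X - \mu = \sigma Y$ and using $F_X(\mu) = F_Y(0)$; the correlation \eqref{eq:cor_qsigma_locationscale-general} is then obtained by dividing by the square roots of the diagonal entries, with the functional derivatives cancelling up to the sign of $h_1'(q_X(p))\,h_2'(D_r)$. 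The main obstacle is the $r=1$ case, where one must show that the non-smooth remainder $R_j$ in the MAD expansion is $o_P(n^{-1/2})$ on average given only the continuity condition $(C_0)$ at $\mu$; once that is in place, everything else reduces to straightforward covariance bookkeeping.
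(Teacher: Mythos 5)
Your proof is correct and follows essentially the same route as the paper's: linearise both $q_{n,\hat\mu,\hat\sigma}(p)$ and $\hat D_{r,n}$ into iid sums (via the Delta method for the square root and the asymptotic negligibility of $(\bar X_n-\mu)^2$), apply the bivariate CLT, and finish with the Delta method for $(h_1,h_2)$. The only cosmetic differences are that you aggregate the quantile estimator into a single iid average $A_j$ before invoking the CLT, whereas the paper computes $\Var\bigl(\bar X_n+q_Y(p)\hat\sigma_n\bigr)$ and the two covariance contributions separately, and that you prove the MAD linearisation directly rather than citing \cite{Segers14} --- your remainder argument is essentially the one the paper itself reproduces in Lemma~\ref{lemma:segers_modif}.
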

%
Note that using the location-scale quantile model implies assuming the existence of a finite fourth moment with the sample variance and with the sample MAD - this is in contrast to the historical estimation with the sample quantile.

We now consider the joint asymptotics of functions of the location-scale quantile estimator with functions of the sample MedianAD.
\begin{proposition}\label{prop-q_mu_sigma-MedianAD}
Consider an iid sample with parent rv $X$ from a location-scale distribution having (unknown) median $\nu$, MedianAD $\xi$, mean $\mu$ and variance $\sigma^2$.
Under $(M_2)$, $(C_0)$ in neighbourhoods of $\nu \pm \xi$, $(C_1^{~'})$ at $\nu, \nu \pm \xi$, $(P)$ at $\nu$, and at least at one of $\nu\pm \xi$ each, the joint behaviour of the functions $h_1$ of the quantile estimator $q_{n, \hat{\mu}, \hat{\sigma}}(p)$ from a location-scale model (for $p \in (0,1)$) and $h_2$ of the sample MedianAD $\hat{\xi}_n$ (defined in \eqref{eq:D_in_xi_n} or Table~\ref{tbl-notation}) is asymptotically normal:
\begin{equation}
\sqrt{n} \, \begin{pmatrix} h_1(q_{n, \hat{\mu},\hat{\sigma}} (p)) - h_1(q_X(p)) \\ h_2(\hat{\xi}_n)  - h_2(\xi) \end{pmatrix} \; \underset{n\to\infty}{\overset{d}{\longrightarrow}} \; \mathcal{N}(0, \Pi), 
\end{equation}
where the asymptotic covariance matrix $\displaystyle \Pi=(\Pi_{ij}, 1\le i,j\le 2)$ satisfies 
\begin{align}
\Pi_{11}&=\sigma^2 \,\left(h_1'(q_X(p))\right)^2 \,\Big(1+q_Y(p) \Big(q_Y(p) (\E[Y^4]-1)/4+\E[Y^3]\Big)\Big)  \,; \quad  
\\ \Pi_{22}&=\frac{1+ \gamma/f_X^2(\nu)}{4 (f_X(\nu +\xi) + f_X(\nu - \xi))^2} \,\left(h_2'(\xi)\right)^2;  
\\  \Pi_{12}&= \Pi_{21}  = \frac{\, h_1'(q_X(p))\, h_2'(\xi) \, \sigma^2}{2\left(f_Y(\frac{\nu+ \xi -\mu}{\sigma}) +f_Y(\frac{\nu- \xi -\mu}{\sigma})\right)} \times  \footnotesize  \Bigg( \left. - \E\left[(Y^2 q_Y (p)+ 2Y) \1_{\left(\frac{\nu - \xi -\mu}{\sigma} < Y \leq \frac{\nu + \xi -\mu}{\sigma}\right)}\right] \right.   \label{eq:cov_qsigma_locationscale-MedianAD}
\\ &  
\hspace{-.8cm} \left.  + \frac{f_Y(\frac{\nu+ \xi -\mu}{\sigma}) -f_Y(\frac{\nu- \xi -\mu}{\sigma})}{f_Y (\frac{\nu-\mu}{\sigma})} \E\left[(Y^2 q_Y (p)  +2Y) \1_{\left(Y \leq \frac{\nu - \mu}{\sigma}\right)}\right]+ \frac{q_Y(p)}{2}\left(1-\frac{f_Y(\frac{\nu+ \xi -\mu}{\sigma}) -f_Y(\frac{\nu- \xi -\mu}{\sigma})}{ f_Y(\frac{\nu-\mu}{\sigma})} \right)\right.  \Bigg).\notag 
\end{align}
The asymptotic correlation remains independent - up to its sign $a_{\pm} = \sgn(h_1'(q_X(p)) h_2'(\xi)) $ - of the specific choice of $h_1,h_2$:
{\small
\begin{align} 
& \lim_{n \rightarrow \infty} \Cor\left( h_1(q_{n, \hat{\mu}, \hat{\sigma}}(p)), h_2(\hat{\xi}_n) \right) =  a_{\pm} \times \label{eq:cor_qsigma_locationscale-MedianAD}
\\ & 
\footnotesize \hspace{-1.9cm} \frac{ - \E\left[(Y^2 q_Y (p)  +2Y) \1_{\left(\frac{\nu - \xi -\mu}{\sigma} < Y \leq \frac{\nu + \xi -\mu}{\sigma}\right)}\right] + \frac{f_Y(\frac{\nu+ \xi -\mu}{\sigma}) -f_Y(\frac{\nu- \xi -\mu}{\sigma})}{f_Y (\frac{\nu-\mu}{\sigma})}  \E\left[\left(Y^2 q_Y (p)  +2Y\right) \1_{\left(Y \leq \frac{\nu - \mu}{\sigma}\right)}\right] + \frac{q_Y (p)}{2}\!\!\left(1-\frac{f_Y(\frac{\nu+ \xi -\mu}{\sigma}) -f_Y(\frac{\nu- \xi -\mu}{\sigma})}{f_Y (\frac{\nu-\mu}{\sigma})} \right) } {\sqrt{ 1 + \frac{\left(f_Y(\frac{\nu+ \xi -\mu}{\sigma}) -f_Y(\frac{\nu- \xi -\mu}{\sigma})\right)^2}{f_Y^2 (\frac{\nu-\mu}{\sigma})}  - \frac{4 \left(f_Y(\frac{\nu+ \xi -\mu}{\sigma}) -f_Y(\frac{\nu- \xi -\mu}{\sigma})\right)}{f_Y (\frac{\nu-\mu}{\sigma})} \left(1 - F_Y(\frac{\nu- \xi-\mu}{\sigma}) - F_Y(\frac{\nu+\xi-\mu}{\sigma})\right)} \times\sqrt{1+ q^2_Y(p) \frac{\E[Y^4]-1}{4} + q_Y(p) \E[Y^3]}} . \notag
\end{align}
}
%
\end{proposition}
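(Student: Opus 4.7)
The plan is to reduce the claim to a joint CLT on the underlying estimators $(\bar X_n, \hat\sigma_n^2, \hat\xi_n)$ and then apply the bivariate Delta method with $(h_1, h_2)$. Since $q_{n,\hat\mu,\hat\sigma}(p) = \bar X_n + \sqrt{\hat\sigma_n^2}\,q_Y(p)$, a first-order Taylor expansion of the square root around $\sigma^2$ gives
\[
q_{n,\hat\mu,\hat\sigma}(p) - q_X(p) \;=\; (\bar X_n - \mu) + \frac{q_Y(p)}{2\sigma}\bigl(\hat\sigma_n^2 - \sigma^2\bigr) + o_P(n^{-1/2}),
\]
which, after substituting the standard expansion $\hat\sigma_n^2 - \sigma^2 = \tfrac{1}{n}\sum_j[(X_j-\mu)^2 - \sigma^2] + o_P(n^{-1/2})$, puts the quantile side in asymptotically linear form $\tfrac{1}{n}\sum_j \phi(X_j) + o_P(n^{-1/2})$ with $\phi(x) := (x-\mu) + \tfrac{q_Y(p)}{2\sigma}((x-\mu)^2 - \sigma^2)$. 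The condition $(M_2)$ (finite fourth moment) ensures $\Var(\phi(X_1))<\infty$.

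For the MedianAD side I would reuse the Bahadur-type asymptotic linear representation $\hat\xi_n - \xi = \frac{1}{n}\sum_j \psi(X_j) + o_P(n^{-1/2})$ already established in the proof of Theorem~\ref{thm:Q-MedianAD}. Here $\psi$ is bounded and mean-zero, built from $\1_{(\nu-\xi<X\le\nu+\xi)}$ (direct contribution) with weight $1/(2\kappa)$, plus a correction proportional to $\delta\,(\hat\nu_n - \nu)$ that is itself asymptotically linear in $\1_{(X\le\nu)}$ via the Bahadur expansion of the sample median, with $\kappa := f_X(\nu+\xi)+f_X(\nu-\xi)$ and $\delta := f_X(\nu+\xi)-f_X(\nu-\xi)$. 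Combining the two representations, $\sqrt n\,(q_{n,\hat\mu,\hat\sigma}(p) - q_X(p),\,\hat\xi_n - \xi)$ is a normalised sum of i.i.d. bivariate vectors with finite covariance, so the multivariate CLT together with Slutsky delivers joint asymptotic normality. Applying the Delta method with $(h_1,h_2)$ then produces the announced limit. The diagonal entries $\Pi_{11}$ and $\Pi_{22}$ are inherited verbatim from Proposition~\ref{prop-q_mu_sigma-general} with $r=2$ and Theorem~\ref{thm:Q-MedianAD} respectively, so no new work is required there.

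The main remaining task is the explicit evaluation of the cross-covariance
\[
\Pi_{12} \;=\; h_1'(q_X(p))\,h_2'(\xi)\,\Bigl[\Cov\bigl(X_1 - \mu,\,\psi(X_1)\bigr) + \frac{q_Y(p)}{2\sigma}\,\Cov\bigl((X_1 - \mu)^2,\,\psi(X_1)\bigr)\Bigr],
\]
and this is where the bulk of the work sits. The location-scale structure is essential: setting $X = \mu + \sigma Y$, each truncated moment $\E[(X-\mu)^k\,\1_{(\nu-\xi<X\le\nu+\xi)}]$ becomes $\sigma^k\,\E[Y^k\,\1_{(\frac{\nu-\xi-\mu}{\sigma}<Y\le\frac{\nu+\xi-\mu}{\sigma})}]$, and $f_X(\cdot) = \tfrac{1}{\sigma}f_Y(\tfrac{\cdot-\mu}{\sigma})$ rewrites both $\kappa$ and $\delta$ in terms of $f_Y$ at the standardised thresholds. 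Using $\E[X-\mu]=0$ and $\E[(X-\mu)^2]=\sigma^2$ to absorb the constant parts of $\psi$, the two covariance contributions collapse into a single expectation in $Y$ matching the bracketed expression in \eqref{eq:cov_qsigma_locationscale-MedianAD}; dividing by $\sqrt{\Pi_{11}\Pi_{22}}$ lets the factors $h_1',h_2',\sigma^2$ cancel up to their sign, yielding \eqref{eq:cor_qsigma_locationscale-MedianAD}. The principal obstacle is tracking the $\hat\nu_n$-correction through all four covariance contributions (one for each pair of $\{X-\mu,(X-\mu)^2\}$ with $\{\1_{(\nu-\xi<X\le\nu+\xi)},\1_{(X\le\nu)}\}$) without losing the cancellations that collapse the expression into the compact form displayed in the statement.
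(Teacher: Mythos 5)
Your proposal follows essentially the same route as the paper's proof: an iid-sum (Bahadur) representation for $\hat{\xi}_n$ combined with the linearisation $q_{n,\hat{\mu},\hat{\sigma}} = \bar{X}_n + q_Y(p)\hat{\sigma}_n$, the multivariate CLT plus Slutsky, inheritance of $\Pi_{11}$ and $\Pi_{22}$ from Proposition~\ref{prop-q_mu_sigma-general} and Theorem~\ref{thm:Q-MedianAD}, and the cross-covariance split into the $\bar{X}_n$ and $\hat{\sigma}_n^2$ contributions (your explicit Taylor expansion of the square root is exactly the paper's Delta-method step converting $\Cov(\hat{\sigma}_n^2,\hat{\xi}_n)$ into $\Cov(\hat{\sigma}_n,\hat{\xi}_n)$ via the factor $1/(2\sigma)$). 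The only blemish is the stated weight $1/(2\kappa)$ on the indicator $\1_{(\nu-\xi<X\le\nu+\xi)}$, which should be $1/\kappa$ as in the Mazumder--Serfling representation you defer to; this does not affect the structure of the argument.
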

While in the case of the asymptotics of the sample MedianAD with the sample quantile, we did not even require a finite mean of the underlying distribution, here, with the location-scale quantile estimator, we need a finite fourth moment.

\subsection{The Effect of Sample Size on the Asymptotics} \label{ssec:sample_size_theor}

We analyse how the asymptotic dependence of the quantile estimator with the measures of dispersion estimator (sample variance, sample MAD, sample MedianAD) is influenced by the chosen (asymptotic) sample size $n$.

Instead of looking separately at all the different cases when using either the historical estimation of the quantile or the location scale model, and one of the three measures of dispersion, we consider a unified approach.
This is motivated by the fact that we are more interested in seeing how the values change with different sample sizes, than presenting formulae for each of the sub-cases, which, anyway, can be deduced from the previous results. Further, we will observe the same scaling property for each measure of dispersion; thus, a unified approach seems appropriate.

We consider sample sizes, say $n_v=vn$ and $n_w=wn$ for integers $v,w>0$, so that they are asymptotically multiples of each other, {\it i.e.} $\displaystyle \lim_{n \rightarrow \infty} n_v/n_w =v/w$. This is a way to introduce `different' sample sizes into an asymptotic framework.
Also, for the sake of readibility, we use here a notation for the quantile estimator, namely $\hat{q}_n$, which only implicitly involves the order of the quantile $p$. With those notations, we can prove the following result.
\begin{theorem}[Asymptotic Scaling Law] \label{thm:general-longer-sample}
Let $v,w$ be positive integers and consider an iid sample with parent rv $X$ with, if existing, mean $\mu$ and variance $\sigma^2$. Under appropriate moment and continuity conditions for $X$ (i.e. $(M_1),(M_2),(C_0), (C_1^{~'}),(P)$, depending on the estimators), the asymptotic covariance between functions of a quantile estimator with sample size $n_v$, 
$h_1(\hat{q}_{n_v})$, and functions of the measure of dispersion estimator for $i \in \{1,2,3 \}$ with sample size $n_w$, $h_2(\hat{D}_{i,n_w})$, satisfies
\begin{equation} \label{eq:cov_longer_samplesize-general}
 \lim_{n \rightarrow \infty} \Cov\left(\sqrt{n} \,h_1(\hat{q}_{n_v}), \sqrt{n} \,h_2(\hat{D}_{i,n_w})\right) = 
 \frac1{\max{\left(v,w\right)}}\, \displaystyle \lim_{n \rightarrow \infty} \Cov\left(\sqrt{n} \,h_1(\hat{q}_{n}), \sqrt{n} \,h_2(\hat{D}_{i,n})\right).
\end{equation} 
Accordingly, the asymptotic correlation is given by
\begin{equation} \label{eq:cor_longer_samplesize-general}
 \lim_{n \rightarrow \infty} \Cor\left( h_1(\hat{q}_{n_v}), h_2(\hat{D}_{i,n_w})\right) = 
 \sqrt{\frac{\min{\left(v,w\right)}}{\max{\left(v,w\right)}}}\, \displaystyle \lim_{n \rightarrow \infty} \Cor\left( h_1(\hat{q}_{n}),  h_2(\hat{D}_{i,n})\right).
\end{equation} 
\end{theorem}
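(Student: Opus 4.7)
The plan is to reduce everything to an iid-sum (influence-function) representation of each estimator, which is already produced, explicitly or implicitly, in the proofs of Theorems~\ref{th:Q-sigma}--\ref{thm:Q-MedianAD} and Propositions~\ref{prop-q_mu_sigma-general}--\ref{prop-q_mu_sigma-MedianAD}. Without loss of generality I would assume $v\le w$, so that $n_v\le n_w$, $\min(v,w)=v$ and $\max(v,w)=w$; the case $v>w$ follows by interchanging the roles of the pairs $(v,n_v)$ and $(w,n_w)$. Under the stated assumptions, the delta method combined with the relevant linearization (Bahadur's representation for the sample quantile; Taylor expansion for $\hat{\sigma}_n^2$ and $\hat{\theta}_n$; the linearization built in the earlier proofs for $\hat{\xi}_n$ and for $q_{n,\hat{\mu},\hat{\sigma}}$) delivers centered, square-integrable influence functions $\psi_1,\psi_2$ satisfying
\begin{align*}
\sqrt{n_v}\bigl(h_1(\hat{q}_{n_v})-h_1(q_X(p))\bigr) &= \frac{1}{\sqrt{n_v}}\sum_{j=1}^{n_v}\psi_1(X_j)+o_P(1), \\
\sqrt{n_w}\bigl(h_2(\hat{D}_{i,n_w})-h_2(D_i)\bigr) &= \frac{1}{\sqrt{n_w}}\sum_{k=1}^{n_w}\psi_2(X_k)+o_P(1).
\end{align*}

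The second step is purely combinatorial. Rescaling by $\sqrt{n}=\sqrt{n_v}/\sqrt{v}=\sqrt{n_w}/\sqrt{w}$ yields $\sqrt{n}\bigl(h_1(\hat{q}_{n_v})-h_1(q_X(p))\bigr)=\tfrac{1}{\sqrt{v\,n_v}}\sum_{j=1}^{n_v}\psi_1(X_j)+o_P(1)$ and analogously for $h_2$. Since $(X_j)$ is iid with $\E[\psi_1(X)]=\E[\psi_2(X)]=0$, all off-diagonal covariances vanish, leaving only the diagonal indices $j=k\le n_v=\min(n_v,n_w)$; each contributes $C:=\E[\psi_1(X)\psi_2(X)]$. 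Therefore
\[
\Cov\!\left(\sqrt{n}\,h_1(\hat{q}_{n_v}),\,\sqrt{n}\,h_2(\hat{D}_{i,n_w})\right) = \frac{n_v}{\sqrt{vw\,n_v n_w}}\,C+o(1) \;\underset{n\to\infty}{\longrightarrow}\; \frac{C}{\max(v,w)},
\]
because $n_v/\sqrt{n_v n_w}=\sqrt{v/w}$ and $\sqrt{v/w}/\sqrt{vw}=1/w=1/\max(v,w)$. Specializing to $v=w=1$ identifies $C=\lim_{n}\Cov(\sqrt{n}\,h_1(\hat{q}_{n}),\sqrt{n}\,h_2(\hat{D}_{i,n}))$, which yields~\eqref{eq:cov_longer_samplesize-general}. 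The same argument applied to the two marginal variances gives $\Var(\sqrt{n}\,h_1(\hat{q}_{n_v}))\to\Var(\psi_1(X))/v$ and $\Var(\sqrt{n}\,h_2(\hat{D}_{i,n_w}))\to\Var(\psi_2(X))/w$; dividing produces the prefactor $\sqrt{vw}/\max(v,w)=\sqrt{\min(v,w)/\max(v,w)}$ relative to the reference correlation at $v=w=1$, which is~\eqref{eq:cor_longer_samplesize-general}.

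The main obstacle I anticipate is not the combinatorics above but rather checking that the $o_P(1)$ remainders from each linearization remain negligible after rescaling by $\sqrt{n}$ instead of by $\sqrt{n_v}$ or $\sqrt{n_w}$. Since $n/n_v=1/v$ and $n/n_w=1/w$ are fixed positive constants, the remainders only pick up bounded multiplicative factors, so the uniform negligibility already exploited in the proofs of the earlier theorems carries over and, by Slutsky's theorem, the above covariance and variance limits are indeed those of the original centered, $\sqrt{n}$-scaled statistics. Putting the two displays together delivers both~\eqref{eq:cov_longer_samplesize-general} and~\eqref{eq:cor_longer_samplesize-general}.
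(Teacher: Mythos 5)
Your proposal is correct and follows essentially the same route as the paper: both rest on centered iid-sum (influence-function) representations of every estimator taken from the earlier proofs, obtain the factor $1/\max(v,w)$ from the fact that only the $\min(n_v,n_w)$ overlapping indices contribute to the covariance of the two sums (the paper organizes this by splitting the longer-sample statistic as a rescaled copy of the shorter-sample one plus a disjoint-sample piece, which is the same computation in different bookkeeping), and then divide by the $1/v$ and $1/w$ scalings of the marginal variances to get the correlation factor $\sqrt{\min(v,w)/\max(v,w)}$. The one place you are less careful than the paper is in discarding the $o_P(1)$ remainders: Slutsky's theorem controls limit distributions but not covariances, and the paper instead invokes Cauchy--Schwarz together with the condition $\lim_{n\to\infty}\Var(\sqrt{n}\,b_{r,n})=0$ on each remainder, which is the sufficient condition you would need to state and verify estimator by estimator.
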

\vspace*{-0.5cm}
Note that the conditions of applicability (i.e. moment and continuity conditions on rv $X$) of this proposition depend on the chosen estimators ($q_{n,t}$ or $q_{n, \hat{\mu}, \hat{\sigma}}, q_{n ,\hat{\sigma},t}$, as well as $\hat{D}_{i,n, t}$ for $i=1,2,3$) and are the same as in the corresponding cases in Sections~\ref{ssec:hist-estim} and~\ref{ssec:loc-scale-estim}.

\subsection{Discussion} \label{ssec:discussion}

\vspace{-1ex}
Let us address different points regarding the main results presented in Sections~\ref{ssec:hist-estim} and~\ref{ssec:loc-scale-estim}.
First, we collect some remarks and implications, which directly follow from the asymptotic theorems.
Then, we review the conditions imposed by them on the underlying distribution $F_X$. 
Finally, we briefly outline the methods used in the proofs of the theorems and propositions. \\[-6ex]
\paragraph{Remarks -} First, regarding the generality of the results presented:
\begin{itemize}
\vspace*{-0.5cm}
\item[-] Theorem~\ref{th:Q-sigma} can be seen as the asymptotics of the sample quantile with the r-th absolute sample moment $\frac{1}{n} \sum_{i=1}^n \lvert X_i - \bar{X}_n \rvert^r$ for $r=1,2$. With some extra work, this result can be extended to any positive integer $r$ and can be found in Appendix~\ref{sec:Appendix_extension}. Here, we prefered to focus on the most well-known measures of dispersion.\\[-5ex]
\item[-] All results could be extended to a more general form using a function $h(x,y) = \begin{pmatrix} h_1 (x,y) \\ h_2(x,y) \end{pmatrix} $ or considering a vector-valued version of the theorem (vector of sample quantiles). Again, this can be found in Appendix~\ref{sec:Appendix_extension} as we prefered, for readability, the above presentation.
\end{itemize}
\vspace*{-0.5cm}
Further, it is worth to point out the following properties when $X$ follows a location-scale distribution:
\vspace*{-.5cm}
\begin{itemize}
\item[-] For the class of location-scale distributions, all the asymptotic correlations do depend neither on the location parameter $\mu$, nor on the scale parameter $\sigma$.\\[-4ex]
\item[-] The asymptotics when considering a location-scale quantile estimator with known mean $\mu$, i.e. $q_{n, \hat{\sigma}_n}$, can be deduced from Propositions~\ref{prop-q_mu_sigma-general} and~\ref{prop-q_mu_sigma-MedianAD}. The explicit results can be found in \cite{Brautigam19_iid}. Therein one can see, for instance, that the asymptotic correlation of $q_{n, \hat{\sigma}}$ with any of the measure of dispersion estimators is independent (up to its sign) of the order $p$ of the quantile.\\[-5ex]
\item[-] Further, if we assume the location-scale distribution to be symmetric, all the asymptotic correlations have their minimum (of value $0$) at $p=0.5$ and are point symmetric around $p=0.5$.
Additionally, recall that in such a case the asymptotic correlation of the sample quantile with the MedianAD is completely independent of the choice of the underlying symmetrical location-scale distribution, see~\cite{Brautigam19_iid}.
\end{itemize}
\vspace*{-0.5cm}
Last but not least, we comment on the functions $h_1,h_2$ used in the results.
\vspace*{-0.5cm}
\begin{itemize}
\item[-] Regarding the choice of functions $h_1$ and $h_2$, some care has to be taken when applying the Delta method, as the conditions will not always be satisfied: E.g.~when using the logarithm, $h_1= \log$, the quantity $h_1^{'}(q_X(p)) = 1/q_X(p)$ is not defined at $p=F_X(0)$. In such a case, if the left-sided and right-sided limits (for the asymptotic covariance and correlation respectively) coincide, we simply set the value at the point itself, by continuity of the limit, to be the left-sided limit.\\[-4ex]
\item[-] If the functions $h_1, h_2$ are such that $h_1'(q_X(p)) \times \,h_2'(D_i) = 0$ (for any $i \in \{1,2,3\}$), then we have asymptotic linear independence in any of the results presented: The asymptotic covariances and asymptotic correlations will equal zero (as $\sgn(0)=0$, by definition).
\end{itemize}
\vspace{-3ex}
\paragraph{Conditions on the underlying distribution -}
While we specified in each theorem which moment and smoothness conditions the underlying parent random variable has to fulfil, we offer in Table~\ref{tbl-conditions-on-estimators} an overview:
For each estimator, we present separately the conditions needed for having a Bahadur/ iid-sum representation (second column) as well as the conditions for the univariate asymptotics (third column). These latter conditions are also sufficient for the joint asymptotics of any measure of dispersion and  quantile estimator, as found in the theorems.

{\small
\begin{table}[H]
\begin{center}
\parbox{460pt}{\caption{\label{tbl-conditions-on-estimators}\sf\small Conditions needed for its representation as Bahadur/iid-sum representation (second column), and for its use in computation of univariate or bivariate asymptotic normality (third column)}}\\[-1ex]
\hspace*{-0.8cm}
\begin{tabular*}{480pt}{>{\centering}m{1.8cm} |   >{\centering}m{8.7cm} >{\centering}m{5.8cm}  >{\centering}m{0.1cm}}
\tabularnewline[0.1ex] Quantile Estimator & Bahadur/ iid-sum Representation &  Asymptotic Normality of Estimator / \\ Joint Asymptotics (with a \\Measure of Dispersion Estimator) 
\tabularnewline[0.5ex] \hline 
 $q_n (p)$  &  $(\text{Q}1)$: $(C_1^{~'})$ and $(P)$ at $q_X(p)$ each & $(\text{Q}1)$ & 
\tabularnewline[2.5ex] $q_{n, \hat{\mu}, \hat{\sigma}}(p)$ \\or $q_{n, \hat{\sigma}}(p)$ & $-$ & $(\text{Q}2): \begin{cases} (M_2) \\ (X-\mu)^2 \text{~not constant} \end{cases}$ &  
\tabularnewline[0.2ex]
\tabularnewline \hline \hline
\tabularnewline[-0.3ex] Measure of Dispersion Estimator&   Bahadur / iid-sum Representation & Asymptotic Normality of Estimator / \\ Joint Asymptotics (with a \\Quantile Estimator)
\tabularnewline[0.5ex] \hline 
$\hat{\sigma}_n^2$& $-$ & $(\text{MD}1): \begin{cases} (M_2) \\ (X-\mu)^2 \text{~not constant} \end{cases}$ & 
\tabularnewline[5ex] $\hat{\theta}_n$ & $(\text{MD}2): \begin{cases} (M_1) \\ (C_0) \text{~at~} \mu \end{cases}$ & $(\text{MD}2)$ &
\tabularnewline[2.5ex] $\hat{\xi}_n$  & $(\text{MD}3): \begin{cases} (C_0) \text{~in a neighbourhood of~} \nu \pm \xi \\ (D_1) \text{~at~} \nu, \nu \pm \xi, \\ (P) \text{~at~} \nu, \text{~and at least one of~} \nu-\xi, \nu+\xi  \end{cases}$ & $(\text{MD}3)$ &
\end{tabular*}
\end{center}
\end{table}
}

We see from Table~\ref{tbl-conditions-on-estimators} that, in most cases, the conditions for the Bahadur representation and the asymptotic normality are the same. The two exceptions are the location-scale quantile estimator and the sample variance.
Also, as already mentioned, the main differences in the choice of estimators lie in the moment conditions: Using the location scale quantile requires the existence of a fourth moment (in contrast to no moment condition for the sample quantile). Also, for the measure of dispersion estimators, a fourth moment is needed when using the sample variance, whereas only a finite second moment with the sample MAD, and no moment conditions at all are imposed by the sample MedianAD. 
This is a very important remark as it needs to be taken into account when choosing estimators in practice.
Lastly, we see that for all different estimators the continuity and differentiability conditions on $F_X$ are not very restrictive. 

Apart from comparing the theoretical conditions of these asymptotics, another important point in applications is to assess the finite sample performance; this will be done in Section~\ref{ssec:sample_size_simulation_study}, via a simulation study.\\[-5ex]

\paragraph{Methods -}Let us now outline the proofs of the main results, which are given in detail in Appendix~\ref{sec:Appendix_general}.

To show the joint asymptotic normality in Theorems~\ref{th:Q-sigma}~and~\ref{thm:Q-MedianAD},  we use the bivariate central limit theorem. It means that the main work is to compute the asymptotic covariances and correlations. To ease the task, we use well chosen representations of the estimators: The Bahadur representations of the sample quantile or sample MedianAD, respectively, and a similar iid-sum representation for the sample MAD. For the sample quantile, provided first by Bahadur in \cite{Bahadur66}, we use the version of Ghosh with weaker conditions, \cite{Ghosh71}, where, under $(C_1^{~'})$ and $(P)$ at $q_X(p)$ each, it holds that
\begin{equation*} 
q_n (p) = q_X (p) + \frac{1- F_n (q_X(p)) - (1-p)}{f_X(q_X(p))} + R_{n,p}, \text{~~with~} R_{n,p} = o_P(n^{-1/2}).
\end{equation*} 
The Bahadur representation for the sample MedianAD was first shown in \cite{Hall85}; here we use the more general version with weaker conditions of \cite{Mazumder09}:
{\small
\begin{equation*}
\hat{\xi}_n - \xi = \frac{1/2 - (F_n(\nu+\xi) -F_n(\nu-\xi))}{f_X(\nu +\xi) + f_X(\nu - \xi)} - \frac{f_X(\nu +\xi) - f_X(\nu - \xi)}{f_X(\nu +\xi) + f_X(\nu - \xi)} \frac{1/2 - F_n(\nu)}{f_X(\nu)} + \Delta_n, \text{~~where~} \Delta_n= o_P(n^{-1/2}).
\end{equation*}
}
Finally, for the sample MAD (see e.g. \cite{Babu92}, \cite{Segers14}) we have that under $(C_0)$ at $\mu$
\begin{equation*}
\hat{\theta}_n = \tilde{\theta}_n + (2F_X(\mu) -1) (\bar{X}_n - \mu) + S_{n,p}, \text{~~with~} S_{n,p} = o_P(n^{-1/2}).
\end{equation*}
%
Note that in \cite{Bos13}, the authors obtained in Theorem A.1 the asymptotic distribution of sample quantiles with $r$-th absolute sample moments, in the case of an iid Gaussian sample with known mean $\mu$. We show that the extension of their result to the general iid case is straightforward. 
But some work and care is required  to extend the corresponding results (when considering the sample variance and sample MAD) to the case of an unknown, hence estimated, mean for any underlying iid distribution (with respective moment and smoothness conditions). As mentioned, the extension for arbitrary r-th absolute central sample moments (with unknown mean $\mu$) is also possible, and can be found in Appendix~\ref{sec:Appendix_extension}. 

Finally, let us mention an alternative approach to prove Theorem~\ref{th:Q-sigma};  it is based on a Taylor expansion of the sample variance (or sample MAD respectively) and the sample quantile. Although the method involves more restrictive smoothness conditions on the underlying distribution, it  has interest on its own, as a natural extension and adaption of the techniques developed by Ferguson in \cite{Ferguson99}, who considered the asymptotic joint distribution of the sample mean with the sample quantile. We present it in Appendix~\ref{sssec:appendix_taylor}.

In contrast to the work with the sample quantile, the proofs involving the parametric location-scale quantile estimator are direct computations. Thus, no specific comments on the procedure are given here and we refer to Appendix~\ref{ssec:appendix_qsigma} for details.

\vspace{-2ex}
\section{Examples} \label{sec:examples}
\vspace{-2ex}
We consider two different applications in this section. Both aim at further understanding the asymptotic dependence behaviour of the different estimators and the implications for their use in practice.
First, we bring together the asymptotic results of the historical estimation with the sample quantile and the usage of the location-scale quantile estimator: We simply compare the strength of asymptotic correlation for the different quantile and measure of dispersion estimators. 
To do so, we focus on the two examples of typical elliptical distributions, one with light tails (Gaussian), the other with heavy tails (Student).
Second, we evaluate empirically in a simulation study for elliptical distributions how well the finite sample results approximate the theoretical asymptotics of Section~\ref{sec:asympt_results}.

For the ease of readibility, by abuse of notation, the term `sample' in the context of estimators may be omitted in this section as we will be exclusively referring to sample quantities throughout: We will use variance, MAD and MedianAD synonymously for sample variance, sample MAD and sample MedianAD, respectively.

\vspace{-2ex}
\subsection{The Impact of the Choice of the Quantile Estimator for Elliptical Distributions} \label{ssec:impact_choice_of_estimator}
 \vspace{-2ex}
We consider the asymptotic correlations when using, respectively, the sample quantile and the location-scale quantile estimator assuming $\mu$ to be known.
While it is known that both estimators are consistent, the parametric location-scale quantile estimator has smaller asymptotic variance than the sample quantile. Here we look at how this influences the asymptotic correlations with the measure of dispersion estimators. Clearly, these correlations can be deduced from the theorems presented in Section~\ref{sec:asympt_results}. 
Here, for the sake of conciseness, we only present the results without derivation. 
The expressions and their computations, as well as an extended analysis also including the asymptotic covariances and ratios of asymptotic covariances or asymptotic correlations, can be found in \cite{Brautigam19_iid}. 
 
As the correlations (up to their sign) do not depend on the functions $h_1,h_2$ of the corresponding quantities, we  focus here on the case where $h_1, h_2$ are the identity functions.
This will make the results more traceable.
Further, we comment only on $p\geq 0.5$, as the case of $p < 0.5$ can be deduced by the corresponding symmetry around $p=0.5$. 

In Figure~\ref{fig:cor_norm_stud}, we plot the asymptotic correlations for the different quantile and measure of dispersion estimators ($\hat{q}_n$ and $\hat{D}_{i,n}, i=1,2,3$). The left plot corresponds to a Gaussian distribution, the two on the right to Student distributions with decreasing degrees of freedom ($\nu=10$ then $5$). 

\begin{figure}[h]
\hspace*{-1.2cm}
\begin{minipage}{0.35\textwidth}
\includegraphics[scale=1.05,width=6cm,height=6cm]{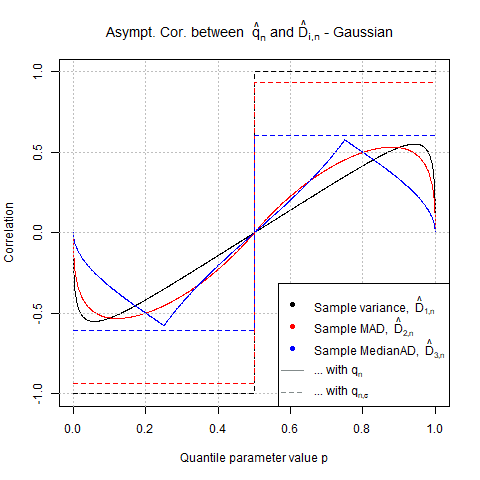}
\end{minipage}
\begin{minipage}{0.35\textwidth}
\includegraphics[scale=1.05,width=6cm,height=6cm]{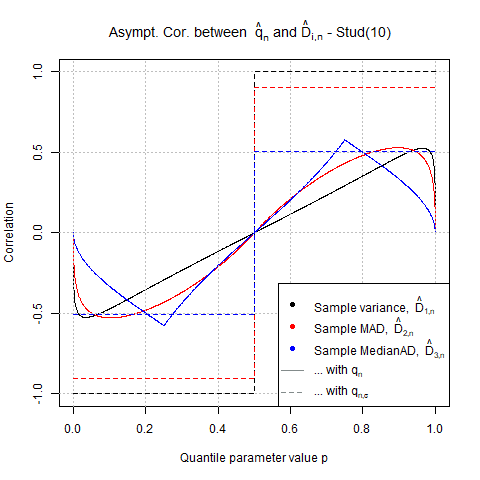}
\end{minipage}
\begin{minipage}{0.35\textwidth}
\includegraphics[scale=1.05,width=6cm,height=6cm]{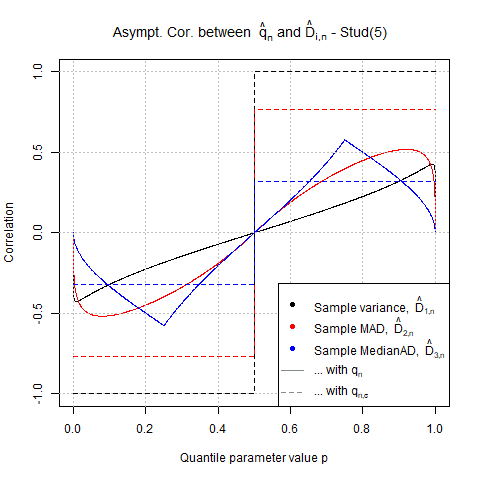}
\end{minipage}
\caption{\label{fig:cor_norm_stud} \sf\small Comparison of asymptotic correlations between the different quantile and measure of dispersion estimators;
From left to right: Gaussian, Student(10), Student(5).}
\end{figure} 
Let us start with two general observations on the plots (which we already mentioned in the remarks in Section~\ref{ssec:discussion}):
\vspace*{-0.5cm}
\begin{itemize}
\item[-] The different correlations are point symmetric around $p=0.5$ where they attain the value of $0$ (as for any symmetric location-scale distribution).\\[-4ex]
\item[-]  The correlation with the location-scale quantile estimator is constant up to the sign of $(p-0.5)$. The reason is that, in $q_{n, \hat{\sigma}}(p)$, only $\sigma$ needs to be estimated - which does not depend on the chosen order of the quantile $p$.
\end{itemize}
\vspace*{-0.5cm}
Concerning the correlations with the sample quantile $q_n$ (solid lines), we see that:
\vspace*{-0.5cm}
\begin{itemize}
\renewcommand{\labelitemi}{--}
\item[-]  All three correlations have a similar range.\\[-4ex]
\item[-]  In the tails (i.e. for small and big values of $p$), the correlations are somewhat similar for variance (black) and MAD (red), but clearly the lowest for the MedianAD (blue). This may be explained by the fact that for a very robust measure of dispersion (as e.g. the MedianAD), an extreme value in a sample does not influence the measure so much as for the variance (which incorporates every deviation from its mean to the square).\\[-4ex]
\item[-]  The distinctive shape of the correlation with the MedianAD (with its two peaks at $p=0.25$ and $p=0.75$) is related to the sample MedianAD being asymptotically equivalent to half of the sample interquartile range, $\frac{q_n(0.75) -q_n(0.25)}{2}$ for symmetric distributions (see \cite{Hall85}).\\[-4ex]
\item[-]  All three correlations with $q_n$, tend to $0$ for $p \rightarrow 1$ (but they are only defined for $p\in (0,1)$). These empirical observations are confirmed theoretically in the literature: The sample maximum, $q_n(1)$, is independent from the sample variance (see \cite{Loynes90}). Corresponding results for the MedianAD and MAD can be expected (for symmetric distributions, because of asymptotically equivalennce of sample MedianAD and half the sample IQR, the asymptotic independence can be explained by the corresponding results on order statistics, see e.g. \cite{Falk88}).
\end{itemize}
\vspace*{-0.5cm}
With respect to the location-scale quantile estimator $q_{n, \hat{\sigma}}$ (dotted lines), we notice that:
\vspace*{-0.5cm}
\begin{itemize}
\renewcommand{\labelitemi}{--}
\item[-]  The correlation is the strongest for the variance (black) and of similar magnitude as with the MAD (red). The correlation with the MedianAD (blue) is at least one third weaker.\\[-4ex]
\item[-]  For the Gaussian distribution, the correlations (dotted lines) are stronger than with the sample quantile (solid lines) whatever the measure of dispersion: The correlation with the MedianAD bounds the correlations of the different measures of dispersion with the sample quantile.
\end{itemize}
\vspace*{-0.5cm}
Finally, the following changes can be observed with heavier tailed distributions:
\vspace*{-0.5cm}
\begin{itemize}
\renewcommand{\labelitemi}{--}
\item[-]  With increasing heavyness of the tails, the correlation of the location-scale quantile estimator $q_{n, \hat{\sigma}}$ decreases significantly for $\nu=5$ in the case of the MAD, and for both $\nu=5$ and $10$ for the MedianAD.
As we are talking about the correlation of $\sigma$ with the measures of dispersion, it seems logic that the heavier the distribution, the less correlation between a robust and non-robust measure of dispersion.\\[-4ex]
\item[-]  The correlation between $q_{n, \hat{\sigma}}$ and the MedianAD does not bound the correlations with $q_n$ anymore. \\[-4ex]
\item[-]  With increasing heavyness of the tail, the correlation of the sample quantile $q_n$ with the sample variance decreases for tail values of $p$ (while staying similar for the MAD).
\end{itemize}

\subsection{The Effect of Sample Size in Estimation} 
\label{ssec:sample_size_simulation_study}
\vspace{-2ex}
We want to assess the finite sample performance, in view of the asymptotic results we obtained for the joint distribution of the quantile and dispersion measure estimators.
When working with data, we estimate the quantile and measure of dispersion estimators on finite samples of size $n$, and as well their corresponding covariance and correlation. It means we build a time-series of quantile estimates and measure of dispersion estimates, then evaluate empirically their linear dependence.

For notational convenience, we use a framework that incorporates all different cases (as introduced in Section~\ref{sec:Intro}):
Denote the quantile estimator simply by $\hat{q}_{n,t}$ for a quantile estimated at time $t$ over a sample of size $n$ (it could be either the sample quantile $q_{n,t}$ or the location-scale quantile model $q_{n, \hat{\mu}, \hat{\sigma},t}$, $q_{n, \hat{\sigma},t}$ respectively), and by $\hat{D}_{i,n,t}$, for $i=1,2,3$, the measure of dispersion of sample size $n$ (and by $D_i$ its theoretical counterpart), referring 
by $\hat{D}_{1,n,t} = \hat{\sigma}_{n,t}^2$ to the sample variance, $\hat{D}_{2,n,t}= \hat{\theta}_{n,t}$ to the sample MAD, and $\hat{D}_{3,n,t}= \hat{\xi}_{n,t}$ to the sample MedianAD.

To assess the finite sample performance, we conduct a simulation study in the following way: We simulate an iid sample with, e.g. mean $\mu=0$ and variance $\sigma^2 =1$, from three different distributions each: Either a Gaussian distribution or Student distributions with 3 and 5 degrees of freedom, respectively. The sample is of varying size $n \times l$. It is determined by the fact that we use different sample sizes $n$ for the estimation of either the quantile or the dispersion measure, with $n=126,252,504, 1008$ (being multiples or fractions of one year of data, i.e. 252 working days/ data points), and different lengths of time-series $l$ to estimate the sample correlation of interest. In each of the cases, the overall sample size needed is $n \times l$. 
Taking the example of an extreme quantile with $p=0.95$, we compute the time series of quantile estimates $\hat{q}_{n,t}(p)$ on disjoint samples and, accordingly, the time series of measure of dispersion estimates $\hat{D}_{i,n,t}$.
%
We then estimate $\Cor\left(\hat{q}_{n,t}(p), \hat{D}_{i,n,t}\right)$, using these two time series of $l$ estimates.
This procedure is repeated 1'000-fold in each case.
We report the averages of the 1'000-fold repetition with, into brackets, the corresponding empirical 95\% confidence interval. Further, we provide as benchmark the theoretical asymptotic value ($n \rightarrow \infty$, $l$ fixed) of this correlation in the last column. 
The explicit expressions in the case of a Gaussian or Student distribution of the asymptotic correlation used to calculate the theoretical values in Table~\ref{tbl:emp_cor-general-summary} could be derived from the theorems presented in Section~\ref{sec:asympt_results} (and can be found in \cite{Brautigam19_iid}).
Further, we have seen that the correlation results for location-scale distributions are independent of its parameters, hence the choice of $\mu, \sigma^2$ does not matter.

Also, we provide theoretical confidence intervals for the sample Pearson correlation coefficient (using the classical variance-stabilizing Fisher transform of the correlation coefficient for a bivariate normal distribution to compute the confidence intervals -see the original paper \cite{Fisher21} or e.g. a standard encyclopedia entry \cite{Rodriguez82}).
Note that those confidence interval values have to be considered with care. Recall that the bivariate normality of the quantile estimator and measure of dispersion estimator holds asymptotically. Hence, it is not clear if for the sample sizes $n$ considered, we can assume bivariate normality (this could be tested). Still, we provide those theoretical confidence intervals as approximate guidance.

In Table~\ref{tbl:emp_cor-general-summary}, we focus on the approximation of the joint asymptotic correlation as a function of the sample size $n$, the different dispersion estimators and the three different distributions considered.
Thus, we only consider the sample quantile (not the location-scale quantile estimator) and fix the length of the sample correlation time series to $l=50$ (from the simulations performed in \cite{Brautigam19_iid}, one can see that such a time series is long enough for a good estimation of the correlation). We present here the case for $p=0.95$. Clearly, for a higher quantile, as e.g. $p=0.99$, but with the same sample size $n$ for the estimation of the quantile, the sample correlation will be less precise (the full results of the simulation study are available in the Appendix of \cite{Brautigam19_iid}).

Recall that when working with the sample standard deviation, the existence of the fourth moment is a necessary condition. Thus, as they do not exist for a Student distribution with 3 degrees of freedom, we simply write `NA' as theoretical value instead.

Let us look at the results in Table~\ref{tbl:emp_cor-general-summary}. First we consider the Gaussian case. For the three dispersion measures, we see that a sample size of $n=126$ suffices to estimate on average the asymptotic correlation well enough. Also, the theoretical confidence intervals coming from a sample correlation of size $l=50$ are well captured by the empirical confidence intervals.
Moving to heavier tailed distributions, the picture changes a bit. The sample correlation with the sample variance does not estimate on average accurately the theoretical value. For increasing $n$, it approaches the theoretical value. This can be explained by the fact that the theoretical correlation values come from the underlying asymptotic bivariate normal distribution. Hence, for a small $n$ the corresponding sample quantities are not yet bivariate normally distributed and one would need a larger sample for this. 
This different behaviour is not observed for the MAD or MedianAD with more accurate results, comparable to the Gaussian case. While the average with the MAD is slightly (one percent point) below the theoretical value for most values of $n$ (which is acceptable), it equals the theoretical value exactly in the case of the MedianAD. In both cases, the sample confidence intervals correspond quite well to the theoretical ones, potentially indicating that the sample quantities converge faster to a bivariate normal distribution.
\begin{table}[H]
\begin{center}
\caption{\label{tbl:emp_cor-general-summary} \small Average values from a 1'000-fold repetition. Comparing the sample correlation of the sample measure of dispersion with the sample quantile, as a function of the sample size $n$ on which the quantile is estimated (fixed length $l=50$ of the time-series  used to estimate the correlation). Underlying samples are simulated from a Gaussian, Student(5) and Student(3) distributions. Average empirical values are written first (with empirical 95\% confidence interval in brackets). The theoretical asymptotic values ($n \rightarrow \infty$, $l=50$ fixed) of the sample correlation and its 95\% confidence interval, are provided as benchmark in the last column. We consider the threshold $p = 0.95$.}
\setlength{\tabcolsep}{7pt}
\begin{tabular*}{420pt}{l cc c cc}    \toprule 
$p=0.95$ & $n=126$ & $n=252$ & $n=504$ & $n=1008$ & theoretical value \\ & & & & & ($n \rightarrow \infty$) 	\\[1ex] \hline\hline
	\\ [-1.5ex]
Gaussian distr. & & & & & 
\\[1ex] $\widehat{\Cor}(\hat{\sigma}_n^2, q_n(p))$ & 55 (33;71)  & 55 (34;73) & 55 (34;73) &  55 (34;71)& 55 (32;72) 
\\[0.5ex]  $\widehat{\Cor}(\hat{\theta}_n, q_n(p))$ & 48 (26;66) & 48 (26;69) & 48 (25;69) & 48 (26;66) & 48 (23;67)
\\[0.5ex] $\widehat{\Cor}(\hat{\xi}_n, q_n(p))$ & 23 (-4;48) & 23 (-3;48) & 23 (-4;49) & 23 (-4;45) & 23~ (-5;48)\\[0.5ex]
\hline 
	\\ [-1.5ex]
Student(5) distr. & & & & & 
\\[1ex] $\widehat{\Cor}(\hat{\sigma}_n^2, q_n(p))$ & 51 (19;75) & 49 (19;71) & 47 (19;68) & 46 (20;67) & 43 (17;63)
\\[0.5ex] $\widehat{\Cor}(\hat{\theta}_n, q_n(p))$ & 50 (27;71) & 50 (27;69) & 50 (27;70) & 51 (27;69) & 51 (27;69)
\\[0.5ex] $\widehat{\Cor}(\hat{\xi}_n, q_n(p))$
& 23 (-6;50) & 23 (-6;47) & 23 (-6;47) &23 (-6;48) &  23~ (-5;48) \\[0.5ex]
\hline 
	\\ [-1.5ex]
Student(3) distr. & & & & & 
\\[1ex] $\widehat{\Cor}(\hat{\sigma}_n^2, q_n(p))$ & 25 (-8;55) & 22 (-9;52) & 19 (-9;47) & 17 (-14;44) &NA
\\[0.5ex] $\widehat{\Cor}(\hat{\theta}_n, q_n(p))$& 48 (21;68) & 47 (23;67) & 47 (20;68) & 47 (23;67) & 48 (23;67) 
\\[0.5ex] $\widehat{\Cor}(\hat{\xi}_n, q_n(p))$ & 23 (-4;49) & 22 (-7;48) & 22 (-7;47) &23 (-7;49) & 23~ (-5;48)
\\[0.5ex]
\hline
\end{tabular*}
\end{center}
\end{table}
\setlength{\tabcolsep}{6pt}
\vspace{-1ex}
In summary, the superior finite sample performance (for the heavier-tailed distributions considered) makes the use of the MAD or MedianAD more favourable than the variance. Especially keeping in mind that for heavy distributions where the fourth moment does not exist, the correlation with the variance is not defined theoretically.
As can be seen in Table~\ref{tbl:emp_cor-general-summary}, the correlation value with the MedianAD is the same for all three distributions considered (and in general for any symmetric location-scale distribution; see \cite{Brautigam19_iid}). Thus, to better discriminate the results according to the distribution, we recommend the usage of the MAD over the MedianAD.
\vspace{-4ex}

\section{Conclusion} \label{sec:conclusio}
\vspace{-2ex}
In this paper we showed the joint asymptotic bivariate normality of functions of two quantile estimators (the sample quantile and the parametric location-scale quantile estimator) and three measure of dispersion estimators (sample variance, sample MAD, sample MedianAD) each - for underlying iid models. Further, we studied theoretically the effect of using different sample sizes, 
providing an asymptotic scaling law for the asymptotic covariance and correlation. 
We also verified through simulation on elliptical distributions a good finite sample performance of the theoretical asymptotic results presented. 
All these results provide theoretical evidence for empirical observations as made in \cite{Brautigam18_WP}, \cite{Zumbach18}, \cite{Zumbach12}.

We considered two different quantile estimators, as their different speed in convergence also affects their asymptotic joint distribution with the measure of dispersion estimators, respectively. 
We analysed this in further detail by looking at the asymptotic correlations when using the sample quantile versus the location-scale quantile estimator (for each of the different measures of dispersion respectively) considering the two main elliptical distributions, the Gaussian and Student distributions. 

Apart from the speed of convergence, the use of the different quantile estimators also implies different moment conditions. So does also using the sample variance (finite fourth moment needed), versus the sample MAD (finite second moment needed) or the sample MedianAD (not even a finite mean is necessary), which may play a role in practice.

All the results should constitute an important and useful complement in the statistical literature as those estimators are either of standard use in statistics and application fields, or should become as such because of weaker conditions in the asymptotic theorems.
We are currently working on extending these joint asymptotic dependence results to different classes of stochastic processes as the class of GARCH processes, as well as developing applications of those results to risk measures and financial risk management.

\small
\bibliography{noteestimation-iid}
\bibliographystyle{acm}

\newpage

\section*{APPENDIX}
\vspace{-1ex}
\begin{appendices} 

The appendix has three sections. The first one provides an overview of the most commonly used notations in this paper. Of big mathematical interest are the proofs of the main theorems (asymptotic distributions of quantile estimators and measure of dispersion estimators) in Appendix~\ref{sec:Appendix_general}.
Finally, Appendix~\ref{sec:Appendix_extension} presents extensions of the main asymptotic theorems, namely the general joint asymptotics of the sample quantile with the r-th absolute central sample moment for any integer $r$ and a vector-valued version of Theorem~\ref{th:Q-sigma} using more general functions. 
\vspace{-3ex}

\section{Notations used in the paper} \label{sec:Appendix_Notation_table}
\vspace{-2ex}
For convenience, we summarise the notation of the different statistical quantities with their corresponding estimators in Table~\ref{tbl-notation}.
{\small
\begin{table}[H]
\begin{center}
\parbox{440pt}{\caption{\label{tbl-notation}\sf\small Notation of statistical quantities and their (possibly various) estimators used in this paper}}\\[-1ex]
\small
\hspace*{-1.2cm} \begin{tabular*}{530pt}{p{4.1cm} p{3.3cm}  | p{4.5cm} p{5.5cm}}
\hline
\\ 
 \multicolumn{2}{c}{Statistical quantities} & \multicolumn{2}{c}{Corresponding Estimators} 
\\ [0.5ex] \hline\hline
\\
mean & $\mu$ & sample mean & $\bar{X}_n = \frac{1}{n} \sum_{i=1}^n X_i$
\\ variance & $\sigma^2$ & sample variance (unknown $\mu$) & $\hat{\sigma}_n^2 = \frac{1}{n-1} \sum_{i=1}^n (X_i - \bar{X}_n)^2$
\\ && \hspace*{2.3cm} (known $\mu$)& $\tilde{\sigma}_n^2 = \frac{1}{n} \sum_{i=1}^n (X_i - \mu)^2$
\\mean absolute deviation (MAD)&$\theta = \E[\lvert X - \mu\rvert]$ & Sample MAD (unknown $\mu$) \quad \phantom{         aa Sample MAD} (known $\mu$) & $\hat{\theta}_n = \frac{1}{n} \sum_{i=1}^n \lvert X_i - \bar{X}_n \rvert$ \phantom{   aaa manaa} $\tilde{\theta}_n = \frac{1}{n} \sum_{i=1}^n \lvert X_i - \mu \rvert$
\\ r-th centred/central moment & $\mu_r =\E[(X-\mu)^r]$ &&
\\ r-th absolute centred moment (measure of dispersion) & $m(X,r) = \E[\lvert X - \mu \rvert^r]$ & Sample measure of dispersion (unknown $\mu$)  & $\hat{m}(X,n,r) = \frac{1}{n} \sum_{i=1}^n \lvert X_i - \bar{X}_n \rvert^r$ 
\\ & & Sample measure of dispersion (known $\mu$)& $\tilde{m}(X,n,r) = \frac{1}{n} \sum_{i=1}^n \lvert X_i - \mu \rvert^r$ 
\\ cdf & $F_X(x)$ & empirical cdf & $F_n(x) = F_{n,X} (x) = \frac{1}{n} \sum_{i=1}^n \1_{(X_i \leq x)}$
\\ pdf & $f_X (x)$ &&
\\ quantile of order p & $\displaystyle q_X(p)=F_X^{-1}(p)$ & sample quantile &  $q_n (p) = X_{( \lceil np \rceil )}$
\\ & & parametric location-scale quantile estimator (unknown $\mu$) & $q_{n,\hat{\mu},\hat{\sigma}} (p) = \hat{\mu}_n + \hat{\sigma}_n q_Y (p)$
\\ & & parametric location-scale quantile estimator (known $\mu$) & $q_{n,\hat{\sigma}} (p) = \mu + \hat{\sigma}_n q_Y (p)$
\\ median & $\nu = q_X(1/2)$  & sample median & $\hat{\nu}_n = \frac{1}{2} (X_{(\lfloor \frac{n+1}{2}  \rfloor)} + X_{(\lfloor \frac{n+2}{2}  \rfloor)})$
\\ median absolute deviation (MedianAD) & $\xi = q_{\lvert X - \nu \rvert}(1/2)\quad$ where $F_{\lvert X - \nu \rvert}(x) =\quad$\; $F_X( \nu+x) - F_X (\nu - x)$ & sample MedianAD & $\hat{\xi}_n = \frac{1}{2} (W_{(\lfloor \frac{n+1}{2}  \rfloor)} + W_{(\lfloor \frac{n+2}{2}  \rfloor)})$ \quad where $W_j = \lvert X_j - \hat{\nu}_n \rvert, j=1,...,n$ 
\\ [1.5ex] \hline
\end{tabular*}
\end{center}
\end{table}
}

\vspace{-4ex}
\section{Proofs of Section~\ref{sec:asympt_results}}  
\label{sec:Appendix_general}
\vspace{-2ex}

In Appendix~\ref{ssec:appendix_qn} we cover the proofs of the asymptotics of the sample quantile with the three measure of dispersion estimators, given in Theorem~\ref{th:Q-sigma} (asymptotic distribution of sample quantile with either sample variance or sample MAD) and Theorem~\ref{thm:Q-MedianAD} (asymptotic distribution of sample quantile with the sample MedianAD). Both results will be proved using the respective Bahadur representations (Appendix~\ref{sssec:appendix_bahadur}). In the case of Theorem~\ref{th:Q-sigma}, we also offer an alternative proof via Taylor expansion (Appendix~\ref{sssec:appendix_taylor}).
Then, Appendix~\ref{ssec:appendix_qsigma} contains the corresponding proofs of the asymptotics when using the location-scale quantile estimator.  
In Appendix~\ref{ssec:appendix_samplesize} we present the proof of the scaling law (Theorem~\ref{thm:general-longer-sample}).

Note that we prove the various results first without introducing the functions $h_1, h_2$. 
Then we will consider them in the proof of Theorem~\ref{th:Q-sigma}, to give an illustration of the application of the Delta method. This latter step being common to all proofs in this section, it will be done only once.

\subsection{Proofs of Subsection~\ref{ssec:hist-estim}} \label{ssec:appendix_qn}
\vspace{-2ex}

\subsubsection{Bahadur's Method}
\label{sssec:appendix_bahadur}
\vspace{-2ex}

This approach is used to prove both Theorems~\ref{th:Q-sigma}~and~\ref{thm:Q-MedianAD}.
 


\begin{proof}{\bf of Theorem~\ref{th:Q-sigma}.}
It consists of two parts. In the first part, we assume the mean $\mu$ to be known. Using the Bahadur representation, then the bivariate Central Limit Theorem (CLT), we show the asymptotic joint normality of the sample quantile and the sample measure of dispersion with known mean, $\tilde{m}(X,n,r)$ (see Table~\ref{tbl-notation} for the notation), for any integer $r>0$. While this first part is a straightforward extension of the Gaussian case proposed in \cite{Bos13}, the second part when considering an unknown mean, involves more care.

{\sf Part 1 - Known Mean -}
\vspace{-2ex}
\begin{itemize}
\item {\it Bahadur representation}

We use the Bahadur representation for sample quantiles from an iid sample given in \cite{Ghosh71}, as the needed conditions $(C_1^{~'})$ and $(P)$ at $q_X(p)$ each are fulfilled by assumption,
\begin{equation} \label{eq:qn_Bahadur}
q_n (p) = q_X (p) + \frac{1- F_n (q_X(p)) - (1-p)}{f_X(q_X(p))} + R_{n,p}, ~~\text{where~} R_{n,p} =o_P(n^{-1/2}).
\end{equation} 
With this Bahadur representation, we are able to use the bivariate CLT for the sample quantile $q_n (p)$ and the sample measure of dispersion with known mean $\mu$, $\tilde{m}(X,n,r)$.
\item {\it Central Limit Theorem} 

Under condition $(M_r)$, $r>0$, we obtain
\begin{equation} \label{eq:CLT_bahadur_1}
 n^{-1/2} \sum_{i=1}^n \left( \begin{pmatrix} \1_{(X_i > q_X(p))} \\ \lvert X_i - \mu\rvert^r  \end{pmatrix} - \begin{pmatrix} 1-p \\ m(X,r) \end{pmatrix} \right) = n^{1/2}  \left( \begin{pmatrix} 1-F_{n}(q_X(p)) \\  \frac{1}{n} \sum_{i=1}^n \lvert X_i - \mu \rvert^r \end{pmatrix} - \begin{pmatrix} 1-p \\ m(X,r) \end{pmatrix} \right)   \overset{d}\rightarrow \mathcal{N}
(0, \hat{\Sigma}^{(r)}),
\end{equation}
where $\hat{\Sigma}^{(r)} = \begin{pmatrix} \Var(\1_{(X > q_X(p))}) & \Cov(\1_{(X > q_X(p))},\lvert X - \mu \rvert^r) \\ \Cov(\1_{(X > q_X(p))},\lvert X - \mu \rvert^r) & \Var(\lvert X - \mu \rvert^r) \end{pmatrix}$.
\\ Then, we need to pre-multiply (i.e. from the left side) equation~(\ref{eq:CLT_bahadur_1}) by $ \begin{bmatrix} 1/(f_X(q_X(p))) & 0 \\ 0 & 1 \end{bmatrix}$ to use the Bahadur representation \eqref{eq:qn_Bahadur} of the sample quantile. 
One gets (as in \cite{Bos13}, just with a different notation),
\begin{equation} \label{eq:asympt_qn_with_mu_Xnr_known_mu}
n^{1/2}  \begin{pmatrix} \frac{1-F_n(q_X(p)) - (1-p)}{f_X(q_X(p))} \\  \frac{1}{n} \sum_{i=1}^n \lvert X_i - \mu \rvert^r - m(X,r) \end{pmatrix} =
n^{1/2} \begin{pmatrix} q_n(p) - q_X(p) -R_{n,p} \\ \tilde{m} (X,n,r) -  m(X,r) \end{pmatrix}  \overset{d}\rightarrow \mathcal{N}
(0, \tilde{\Sigma}^{(r)})
\end{equation}
where now 
\begin{equation} \label{eq:Sigma_tilde_matrix}
 \tilde{\Sigma}^{(r)} = \begin{pmatrix} \frac{\Var(\1_{(X > q_X(p))})}{f_X^2(q_X(p))} & \frac{\Cov(\1_{(X > q_X(p))},\lvert X - \mu \rvert^r)}{f_X(q_X(p))} \\ \frac{\Cov(\1_{(X > q_X(p))},\lvert X - \mu \rvert^r)}{f_X(q_X(p))} & \Var(\lvert X - \mu \rvert^r) \end{pmatrix} .
\end{equation}
As $R_{n,p} = o_P(n^{-1/2})$, we can ignore it in an asymptotic analysis, as it follows from Slutsky's theorem that the distribution does not change.
Now let us compute the asymptotic covariance matrix $\tilde{\Sigma}^{(r)}$.
As we assume $(C_1)^{~'}$ and $(P)$ at $q_X(p)$, we have $F_X(q_X(p))=p$, hence
\[ \E[\1_{(X > q_X(p))}] = 1-p ~~ \text{and~} \Var(\1_{(X > q_X(p))}) = p (1-p).\] 
Therefore, introducing $\tau_k(\eta(X),p)$ defined in~\eqref{eq:def-tau}, we can write
\begin{equation} \label{eq:cov_ind_abs_moment}
\Cov(\1_{(X > q_X(p))}, \lvert X - \mu \rvert^r)=(1-p)\E[ \lvert X - \mu\rvert^r \vert X > q_X(p)]  - (1-p)\E[\lvert X - \mu\rvert^r] = \tau_r (\lvert X -\mu \rvert,p),
\end{equation}
%
which concludes to the asymptotic joint distribution of the sample quantile and the sample measure of dispersion with known $\mu$ for any integer $r>0$.
\end{itemize}

{\sf Part 2 - Unknown Mean -}

We analyse what happens with respect to the joint asymptotic distribution if we consider $\hat{m}(X,n,r)$ instead of $\tilde{m}(X,n,r)$, for $r=1,2$, and treat the two cases separately (for an extension to any integer $r>0$, see Theorem~\ref{th:qn-abs-central-moment}).
\vspace{-.2cm}
\begin{itemize}
\item Case $r=2$. Recall that $\hat{m}(X,n,2) = \frac{n}{n-1} \hat{\sigma}_n^2$ and $\tilde{m}(X,n,2) = \tilde{\sigma}_n^2$ (sample variance with known mean), so we can write, 
\begin{equation}\label{eq:rel_sigma_hat_tilde1}
\hat{m}(X,n,2) =  \frac{1}{n} \sum_{i=1}^n (X_i - \mu)^2 - (\bar{X}_n - \mu)^2
=   \tilde{m}(X,n,2) - (\bar{X}_n - \mu)^2.
\end{equation}
Since we know that $\displaystyle (\bar{X}_n - \mu) \underset{n\to\infty}{\overset{P}\rightarrow 0}$ and $\displaystyle \sqrt{n} (\bar{X}_n - \mu) \underset{n\to\infty}{\overset{d}\rightarrow} \mathcal{N}(0, \sigma^2)$, it comes, via 
Slutsky's theorem, 
\begin{equation}
\sqrt{n} (\bar{X}_n - \mu)^2 \underset{n\to\infty}{\overset{P}{\longrightarrow}} 0\,, \label{eq:rel_sigma_hat_tilde2}
\end{equation} 
from which we deduce, applying once more Slutsky's theorem, that the bivariate asymptotic distribution will not change when considering $\hat{m}(X,n,2)$ instead of $\tilde{m}(X,n,2)$.
Meaning, we have from \eqref{eq:asympt_qn_with_mu_Xnr_known_mu},~\eqref{eq:Sigma_tilde_matrix},~\eqref{eq:cov_ind_abs_moment} in the case $r=2$ that
\begin{equation*} 
\sqrt{n} \begin{pmatrix} q_n(p) - q_X(p)  \\ \hat{m}(X,n,2)  -  \sigma^2 \end{pmatrix}
\underset{n\to\infty}{\overset{d}{\longrightarrow}}  \mathcal{N}
(0, \Sigma^{(2)}),
\end{equation*}
\[ \text{where}\quad\Sigma^{(2)} = \begin{pmatrix} \frac{p(1-p)}{f_X^2(q_X(p))} & \frac{\tau_2(\lvert X - \mu \rvert,p)}{f_X(q_X(p))} \\ \frac{\tau_2(\lvert X - \mu \rvert,p)}{f_X(q_X(p))}  & \Var(\lvert X - \mu \rvert^2) \end{pmatrix}.\]
\item Case $r=1$. 
In contrast to the case $r=2$, the asymptotics of $\hat{\theta}_n= \hat{m}(X,n,1)$ and $\tilde{\theta}_n= \tilde{m}(X,n,1)$ are, in general, not the same, as we are going to see. \\
E.g. from \cite{Segers14}, for a distribution $F_X$ with finite first moment $\mu$ and continuous at $\mu$ (which is fulfilled by assumption), the sample MAD, as $n \rightarrow \infty$, satisfies almost surely
\begin{equation} \label{eq:theta_representation}
\sqrt{n} (\hat{m}(X,n,1)- \theta) = \sqrt{n} (\tilde{m}(X,n,1) - \theta) + \sqrt{n}(2 F_X(\mu) -1)  (\bar{X}_n - \mu) + o_P(1).
\end{equation}
Combining  \eqref{eq:theta_representation} with the asymptotic joint distribution obtained in part 1 (when $\mu$ is known), provides
\begin{equation*} \label{eq:asympt_qn_with_mu_Xnr}
\sqrt{n} \begin{pmatrix} q_n(p) - q_X(p)  \\ \hat{m}(X,n,1)  -  \theta \end{pmatrix}
\underset{n\to\infty}{\overset{d}{\longrightarrow}}  \mathcal{N}
(0, \Sigma^{(1)}),
\end{equation*}
\[ \text{where}\quad\Sigma^{(1)} = \begin{pmatrix} \frac{p(1-p)}{f_X^2(q_X(p))} & \displaystyle \lim_{n \rightarrow \infty} \Cov(\sqrt{n}\,\hat{q}_n(p),\sqrt{n} \,\hat{m}(X,n,1) \\ \displaystyle \lim_{n \rightarrow \infty} \Cov(\sqrt{n}\,\hat{q}_n(p),\sqrt{n} \,\hat{m}(X,n,1))  & \Var(\lvert X - \mu \rvert+ (2F_X(\mu)-1) X) \end{pmatrix},\]
as $q_n(p)$ remains unchanged (so $\Sigma_{11}^{(1)}$ is known from \eqref{eq:asympt_qn_with_mu_Xnr_known_mu}/~\eqref{eq:Sigma_tilde_matrix})  and $\Sigma_{22}^{(1)}$ follows from the representation \eqref{eq:theta_representation}). 

We conclude with the computation of the asymptotic covariance:
\begin{align*} 
 \lim_{n \rightarrow \infty} \Cov(\sqrt{n} q_n(p) , \sqrt{n} \hat{\theta}_n ) &=  \lim_{n \rightarrow \infty} \Cov(\sqrt{n} q_n(p) , \sqrt{n} \tilde{\theta}_n) +(2F_X(\mu)-1) \lim_{n \rightarrow \infty} \Cov(\sqrt{n} q_n(p) , \sqrt{n} (\bar{X}_n - \mu)) 
\\ &=  \frac{\tau_1(\lvert X-\mu \rvert, p)}{f_X(q_X(p))} + (2 F_X(\mu) -1)  \frac{\tau_1 (p)}{f_X(q_X(p))} .
\end{align*}  
In the first equality, by Slutsky's theorem, we ignored the rest term $o_P(1)$ appearing in \eqref{eq:theta_representation} as it converges to 0 in probability and thus does not change the asymptotic distribution. The first term of the second equality follows from the case with known mean $\mu$ (see \eqref{eq:Sigma_tilde_matrix}/~\eqref{eq:cov_ind_abs_moment}), and the second term $\displaystyle \lim_{n \rightarrow \infty} \Cov(\sqrt{n} \bar{X}_n, \sqrt{n} q_n (p))$ is given in \cite{Ferguson99}. 
%
\end{itemize}
Putting both cases together, we obtain, for $r=1,2$:
\begin{align*}
\lim_{n \rightarrow \infty} \Cov(\sqrt{n} \,q_n(p), \sqrt{n} \,\hat{m}(X,n,r)) &=  \frac{\tau_r(\lvert X - \mu \rvert,p) + (2-r) (2F_X(\mu)-1) \tau_1 (p)}{f_X(q_X(p))} 
\\ 
\text{and }\quad\lim_{n \rightarrow \infty} \Cor(q_n(p), \hat{m}(X,n,r)) &=  \frac{\tau_r(\lvert X - \mu \rvert,p) + (2-r) (2F_X(\mu)-1) \tau_1 (p)}{\sqrt{p(1-p)} \sqrt{\Var(\lvert X - \mu \rvert^r + (2-r) (2F_X(\mu)-1)X)}}. \numberthis \label{eq:p65-2}
\end{align*}
Note that in the specific case of $r=1,2$ we use in Theorem~\ref{th:Q-sigma} the notation $\hat{D}_{r,n}$ instead of $\hat{m}(X,n,r)$ and $D_r$ instead of $m(X,r)$.
{\sf Part 3 - Delta Method -}

Getting the expressions involving the functions $h_1, h_2$ is an application of the bivariate Delta method.
For a given function $h(x,y)= \begin{pmatrix}
h_1(x) \\ h_2(y) \end{pmatrix}$, such that the Jacobian of $h(x,y)$ exists at the point $x=q_X(p), y=m(X,r)$,
\[ J(h(q_X(p),m(X,r))) := \begin{bmatrix}
\frac{\partial h_1(x)} {\partial x} & \frac{\partial h_1(x)} {\partial y} \\ \frac{\partial h_2(y)} {\partial x} & \frac{\partial h_2(y)} {\partial y} \end{bmatrix}_{x=q_X(p), y=m(X,r)} = \begin{bmatrix}
h_1^{'}(q_X(p)) & 0 \\ 0  & h_2^{'}(m(X,r)) \end{bmatrix}, \]
we can apply the Delta method. This implies that from the asymptotics we proved above, namely
\[\sqrt{n} \, \begin{pmatrix} q_n (p) - q_X(p) \\ \hat{D}_{r,n}  - D_{r} \end{pmatrix} \; \underset{n\to\infty}{\overset{d}{\longrightarrow}} \; \mathcal{N}(0, \Sigma^{(r)}), \]
it follows that (where we denote by $z^t$ the transpose of a vector $z$)
\begin{align*}
&\sqrt{n} \, h(q_n (p), \hat{D}_{r,n} ) - h(q_X(p), D_{r})  \; \underset{n\to\infty}{\overset{d}{\longrightarrow}} \; \mathcal{N}(0, J(h(q_X(p), D_r)) \Sigma^{(r)} J(h(q_X(p), D_r))^{t}).
\end{align*}
It means to replace $\Sigma^{(r)}$ by $J(h(q_X(p), D_r)) \Sigma^{(r)} J(h(q_X(p), D_r))^{t}$. That is why the factors $h_1^{'}(q_X(p)), h_2^{'}(D_r)$ appear in the covariance terms of \eqref{eq:cov-hist-general1} and~\eqref{eq:cov-hist-general2}.
%
%
%
%
\end{proof}


\begin{proof}{\bf of Theorem~\ref{thm:Q-MedianAD}.}
Besides the Bahadur representation of the sample quantile, \eqref{eq:qn_Bahadur}, we also use a Bahadur representation (version of \cite{Mazumder09}) for the sample MedianAD $\hat{\xi}_n$, namely 
\begin{equation} \label{eq:MedianAD_bahadur}
\hat{\xi}_n - \xi = \frac{1/2 - (F_n(\nu+\xi) -F_n(\nu-\xi))}{f_X(\nu +\xi) + f_X(\nu - \xi)} - \frac{f_X(\nu +\xi) - f_X(\nu - \xi)}{f_X(\nu +\xi) + f_X(\nu - \xi)} \frac{1/2 - F_n(\nu)}{f_X(\nu)} + \Delta_n, \text{~where~} \Delta_n= o_P(n^{-1/2}).
\end{equation}
Clearly, \eqref{eq:MedianAD_bahadur} can be rewritten in terms of an iid sum as
\begin{equation} \label{eq:MedianAD_bahadur_vers2}
\hat{\xi}_n - \xi = \frac{\frac{1}{n}\sum_{i=1}^n \left(\alpha \1_{(x \leq \nu)} - f_X(\nu) \1_{(\nu - \xi < x \leq \nu + \xi)} \right)  - \frac{1}{2}\left(  \alpha -f_X(\nu) \right) }{\beta f_X(\nu)} + \Delta_n,
\end{equation}
where, for notational simplification, 
$\displaystyle \alpha :=f_X(\nu +\xi) - f_X(\nu - \xi)$ and $\displaystyle \beta := f_X(\nu +\xi) + f_X(\nu - \xi)$, respectively.

Using equations~\eqref{eq:qn_Bahadur} and~\eqref{eq:MedianAD_bahadur_vers2}, and the fact that, by definition of $\nu$ and $\xi$, $\P(X \leq \nu) =F_X(\nu) = 1/2$ and $\P( \nu -\xi < X \leq \nu + \xi) = F_{\lvert X - \nu \rvert} (\xi) = 1/2$, we apply the bivariate CLT and obtain:
\begin{align} 
 &n^{-1/2} \sum_{i=1}^n \left( \begin{pmatrix} \1_{(X_i > q_X(p))} \\ \alpha \1_{(X_i \leq \nu) }- f_X(\nu)\1_{(\nu - \xi < X_i \leq \nu + \xi)}  \end{pmatrix} - \begin{pmatrix} 1-p \\ 1/2 (\alpha-f_X(\nu)) \end{pmatrix} \right)  \notag
 \\ &= n^{1/2}  \left( \begin{pmatrix} 1-F_n(q_X(p)) \\  \frac{1}{n} \sum (\alpha \1_{(X \leq \nu) }- f_X(\nu) \1_{(\nu - \xi < X \leq \nu + \xi)} ) \end{pmatrix} - \begin{pmatrix} 1-p \\ 1/2 (\alpha-f_X(\nu)) \end{pmatrix} \right)   \overset{d}\rightarrow \mathcal{N}
(0, \tilde{\Gamma}), \label{eq:CLT_bahadur_MedianAD}
\end{align}
where $\tilde{\Gamma} = \begin{pmatrix} p(1-p) & cov_{ind: q_n, \hat{\xi}_n} \\ cov_{ind: q_n, \hat{\xi}_n} & \Var(\alpha \1_{(X_i \leq \nu) }- f_X(\nu)\1_{(\nu - \xi < X_i \leq \nu + \xi)}) \end{pmatrix}$ 
\\with $\displaystyle cov_{ind: q_n, \hat{\xi}_n}:=
\alpha \max{(0, p -1/2)} - f_X(\nu) \big(\max{(0,F_X(\nu +\xi) - \max{(F_X(\nu - \xi),p)})} - (1-p)/2 \big) $, as we are going to prove below.
\\ Then, we need to pre-multiply (i.e. from the left side) equation~(\ref{eq:CLT_bahadur_MedianAD}) by $ \begin{bmatrix} 1/(f_X(q_X(p))) & 0 \\ 0 & 1/(\beta f_X(\nu)) \end{bmatrix}$ to use the Bahadur representation of the sample quantile and of the sample MedianAD (recall \eqref{eq:qn_Bahadur}, \eqref{eq:MedianAD_bahadur_vers2}). We obtain:
\begin{equation*} 
n^{1/2} \begin{pmatrix} \frac{1-F_n(q_X(p)) - (1-p)}{f_X(q_X(p))} \\  \frac{\frac{1}{n} \sum (\alpha \1_{(X \leq \nu) }- f_X(\nu) \1_{(\nu - \xi < X \leq \nu + \xi)}) -1/2 ( \alpha - f_X(\nu))}{\beta f_X(\nu)} \end{pmatrix} =
n^{1/2} \begin{pmatrix} q_n(p) - q_X(p) -R_{n,p} \\ \hat{\xi}_n - \xi - \Delta_n \end{pmatrix}  \overset{d}{\underset{n\to\infty}{\longrightarrow}} \mathcal{N} (0, \Gamma)
\end{equation*}
where, ignoring $R_{n,p}$ and $\Delta_n$ since they are $o_P(n^{-1/2})$ a.s. (same argumentation as for $R_{n,p}$ in the proof of Theorem~\ref{th:Q-sigma}, Part 1),
\[ \Gamma = \begin{pmatrix} \displaystyle
\frac{p(1-p)}{f_X^2(q_X(p))} & \displaystyle \frac{cov_{ind: q_n, \hat{\xi}_n}}{\beta f_X(\nu) f_X(q_X(p))} \\ \displaystyle\frac{cov_{ind: q_n, \hat{\xi}_n}}{\beta f_X(\nu) f_X(q_X(p))} & \displaystyle\frac{\Var(\alpha \1_{(X_i \leq \nu) }- f_X(\nu)\1_{(\nu - \xi < X_i \leq \nu + \xi)})}{\beta^2 f_X^2(\nu)}  \end{pmatrix}. \]
%
%
We are left with computing the covariance $\displaystyle cov_{ind: q_n, \hat{\xi}_n}$ and the following variance:
\begin{align*}
\Var(\alpha &\1_{(X_i \leq \nu)} - f_X(\nu) \1_{(\nu - \xi < X_i \leq \nu + \xi)} ) 
\\ &= \alpha^2  \Var(\1_{(X_i \leq \nu)}) + f^2_X(\nu) \Var(\1_{(\nu - \xi < X_i \leq \nu + \xi)}) 
+ 2 \alpha f_X(\nu) \Cov(\1_{(X_i \leq \nu)},- \1_{(\nu - \xi < X_i \leq \nu + \xi)})
\\ & =\frac{1}{4} \left(\alpha^2 + f_X^2(\nu) - 8 \alpha f_X(\nu) (\E[\1_{(\nu - \xi < X_i \leq \nu)}] - 1/4 ) \right)
=\frac{1}{4} \left(\alpha^2 + f_X^2(\nu) - 4 \alpha f_X(\nu) (1/2 - 2 F_X(\nu- \xi) \right) 
\\&=\frac{1}{4} (f_X^2(\nu) + \gamma),\quad \text{where}\quad\gamma := \alpha^2 - 4\alpha f_X(\nu) (1- F_X(\nu -\xi) - F_X(\nu+\xi)).
\end{align*}
Let us turn to the computation of $\displaystyle cov_{ind: q_n, \hat{\xi}_n}$:
\begin{align}
\hspace*{-1cm} &\Cov(\1_{(X_i> q_X(p))}, \alpha \1_{(X_i \leq \nu) }- f_X(\nu) \1_{(\nu - \xi < X_i \leq \nu + \xi)}) =  \notag
\\&  \alpha \E[\1_{(X_i> q_X(p))} \1_{(X_i \leq \nu) }] 
- f_X(\nu) \E[ \1_{(X_i> q_X(p))} \1_{(\nu - \xi < X_i \leq \nu + \xi)}] 
 - (1-p) (\alpha - f_X(\nu))/2.  \label{eq:cov_two_ind_fct_MedianAD}
\end{align}
Let us consider one after the other the two expectations in \eqref{eq:cov_two_ind_fct_MedianAD}. 
Note that we can write (using the definition of $\nu$)
\[ \1_{(X_i> q_X(p))} \1_{(X_i \leq \nu) } = \begin{cases}
0 & \text{if } \nu \leq q_X(p) \quad (\Leftrightarrow p \geq 1/2) \\
\1_{(q_X(p) < X_i \leq \nu)} & \text{if } \nu > q_X(p) \quad (\Leftrightarrow p<1/2)\\ \end{cases}, \]
from which we deduce \; $\displaystyle \E[\1_{(X_i> q_X(p))} \1_{(X_i \leq \nu)}] = \max{(1/2 -p,0)}$.
Analogously,
\[ \1_{(X_i> q_X(p))} \1_{(\nu - \xi < X_i \leq \nu + \xi)} = \begin{cases}
0 & \text{if } q_X(p)> \nu + \xi \quad (\Leftrightarrow p>F_X(\nu+\xi)), \\
\1_{(q_X(p) < X_i \leq \nu + \xi)} & \text{if }  \nu - \xi \leq q_X(p) \leq \nu + \xi  \quad (\Leftrightarrow F_X(\nu-\xi) \leq p \leq F_X(\nu+\xi)), \\
\1_{(\nu - \xi < X_i \leq \nu + \xi)} & \text{if } q_X(p) < \nu - \xi \quad(\Leftrightarrow p< F_X(\nu - \xi)). \\ \end{cases} \]
Thus we have 
$\displaystyle \E[\1_{(X_i> q_X(p))} \1_{(\nu - \xi < X_i \leq \nu + \xi)}] = \max{(0,F_X(\nu +\xi) - \max{(F_X(\nu - \xi),p)})}$.\\
Combining these two expressions in \eqref{eq:cov_two_ind_fct_MedianAD} provides:
\begin{align*}
cov_{ind: q_n, \hat{\xi}_n} &= \alpha \max{(1/2 -p,0)} - f_X(\nu) \max{\big(0,F_X(\nu +\xi) - \max{(F_X(\nu - \xi),p)}\big)} -(1-p)(\alpha - f_X(\nu))/2
\\ &= \alpha \max{(-p/2, (p -1)/2)}  - f_X(\nu) \big( \max{(0,F_X(\nu +\xi) - \max{(F_X(\nu - \xi),p)})} - (1-p)/2 \big).
\end{align*} 
This concludes the computations. Nevertheless, to be explicit, let us write out the overall asymptotic covariance and correlation: \;$\displaystyle\lim_{n \rightarrow \infty} \Cov(\sqrt{n} q_n(p), \sqrt{n} \hat{\xi}_n) = \frac{cov_{ind:q_n, \hat{\xi}_n}}{\beta f_X(\nu) f_X(q_X(p))}=$
$$
\frac{- \max{\left(0, F_X(\nu +\xi) - \max{\left( F_X(\nu - \xi),p\right)}\right)} + \frac{1-p}{2} + \frac{f_X(\nu+ \xi) - f_X(\nu -\xi)}{f_X(\nu)} \max{\left(-\frac{p}{2},\frac{p-1}{2}\right)} }{ (f_X(\nu + \xi) + f_X(\nu-\xi)) f_X(q_X(p))}, 
$$
which is exactly the covariance in \eqref{eq:cov-hist-general2-MedianAD} for the case $h_1(x) = h_2(x) = x$ (the case with general functions $h_1,h_2$ follows directly by the application of the Delta method), whereas the correlation is as in \eqref{eq:cor-functional-sample-quant-MedianAD}:
$$
\lim_{n \rightarrow \infty} \Cor( q_n(p), \hat{\xi}_n) 
= \frac{- \max{\left(0, F_X(\nu +\xi) - \max{\left( F_X(\nu - \xi),p\right)}\right)} + \frac{1-p}{2} + \frac{f_X(\nu+ \xi) - f_X(\nu -\xi)}{f_X(\nu)} \max{\left(-\frac{p}{2},\frac{p-1}{2}\right)} } {\sqrt{\frac{p(1-p)}{4}} \sqrt{1+ \frac{\gamma}{f_X^2 ( \nu)}}}.
$$
As expected, the above computed asymptotic variance of the sample MedianAD, i.e.
\begin{equation} \label{eq:MedianAD_asympt_var}
 \lim_{n \rightarrow \infty} \Var(\sqrt{n} \hat{\xi}_n) = \frac{1+ \gamma / f_X^2(\nu)}{4\left (f_X(\nu +\xi) + f_X(\nu - \xi)\right)^2}, 
\end{equation}
exactly equals the variance of the sample MedianAD as in equation (11) of \cite{Serfling09} (while in \cite{Mazumder09} they seem to have some typos in their definition of the quantity $\gamma$ such that one does not get the same result).
\end{proof}

\vspace{-3ex}
\subsubsection{Taylor's Method}\label{sssec:appendix_taylor}
\vspace{-2ex}
As it may have interest on its own, we offer an additional proof for Theorem~\ref{th:Q-sigma}, which is based on a Taylor expansion and extends the ideas of \cite{Ferguson99}.
The proof consists of two parts. In the first and main part, we show the Taylor expansion and asymptotic normality in the case of estimating the measures of dispersion with known mean $\mu$ for any integer valued $r$ (in analogy to the first part in the proof of Theorem~\ref{th:Q-sigma}). 
The second part consists of extending the previous result to the case where we estimate the measures of dispersion in the case of an unknown mean $\mu$ for $r=1,2$ and is identical to Part 2 in the proof of Theorem~\ref{th:Q-sigma}. Therefore we focus here only on the case $\mu$ known.

We start showing the asymptotic normality in the case of estimating the measure of dispersion by $\tilde{m}(X,n,r) = \frac{1}{n} \sum_{i=1}^n \lvert X_i - \mu \rvert^r$.
This is done in three steps. The first step is to provide a representation such that our
quantities of interest, the sample quantile and the measure of dispersion estimator, are functions of the
uniform order statistics. Then, we use the Taylor expansion to prove the asymptotic normality of each
of the estimators. This is the step which requires more extensive differentiability and continuity conditions than the proof in Appendix~\ref{sssec:appendix_bahadur}. Finally, in a third step, we compute 
the covariance (and then the correlation) between the measure of dispersion estimator and the sample
quantile.

{\sf Step 1: Functions of the uniform order statistics}\\
Recall that for a standard exponentially distributed iid sample $\left( Z_1,...,Z_{n+1} \right)$, defining $U_j := \frac{\sum_{i=1}^j Z_i}{\sum_{k=1}^{n+1} Z_k}$, for $ j=1,...,n$, we have that $(U_1,...,U_{n})$ has the same distribution as the order statistics from a sample of size $n$ from a standard uniform distribution (see e.g.~\cite{Pyke65}). 
%
This allows us to express the sample quantile $q_n (p)$ and the sample measure of dispersion $\tilde{m}(X,n,r)$ as follows:
\begin{align}
q_n(p) &= X_{(\lceil{np} \rceil)} = q_X(U_{\lceil{np} \rceil}), \label{eq:qn_representation}
\\  \tilde{m}(X,n,r) &= \frac{1}{n} \sum_{i=1}^n \lvert q_X(U_i) - \mu \rvert^r. \label{eq:sample_dispersion_representation}
\end{align}
{\sf Step 2: Taylor expansions}\\
Using this, we can proceed with the Taylor expansion.
Only some work is needed for $\tilde{m}(X,n,r)$ as we can use the result of \cite{Ferguson99} for the sample quantile:
By expanding the sample quantile $q_n (p) = q_X(U_{\lceil np \rceil})$ around $p$ Ferguson gets
$\displaystyle q_n (p) = q_X(p) +  q'_X(p) (U_{\lceil np \rceil} -p) + O(n^{-2}) $.
And following equations (11), (13) and (15) in \cite{Ferguson99}, one concludes the asymptotic normality of the sample quantile
\begin{equation*} \label{eq:qn_taylor}
\sqrt{n} (q_n - q_X(p)) \underset{n\to\infty}{\sim} q'_X(p) \left(\frac{\sum_{j=1}^{\lceil np \rceil} Z_j} {\sum_{k=1}^{n+1} Z_k} -p\right) \underset{n\to \infty}{\overset{d}\longrightarrow} q'_X(p) B(p),
\end{equation*}
where $B(t) := W(t) - tW(1)$ is the Brownian bridge, $W$ denoting the standard Wiener process.
\\ Then, expanding each $q_X(U_i)$ in \eqref{eq:sample_dispersion_representation} around $i/(n+1)$, $i=1,...,n$, we obtain: $\displaystyle \tilde{m} (X,n,r) = $
\[ \frac{1}{n} \sum_{i=1}^n \left( \left\lvert q_X\left(\frac{i}{n+1}\right) - \mu \right\rvert^r  \, +\right.
\left.  r \,\left\lvert q_X\left(\frac{i}{n+1}\right) - \mu\right\rvert^{r-1} q'_X\left(\frac{i}{n+1}\right) \sgn\left(q_X\left(\frac{i}{n+1}\right) - \mu\right) \left(U_i - \frac{i}{n+1}\right) + O(n^{-2}) \right)\]
The terms of order $n^{-2}$ are negligible in the asymptotic analysis (i.e. vanish asymptotically).
\\ Then, in analogy to $\mu_n$ in \cite{Ferguson99}, we define
$\displaystyle \mu_n(X,r) := \frac{1}{n} \sum_{i=1}^n \left\lvert q_X\left(\frac{i}{n+1}\right) - \mu\right\rvert^r$. We can interpret it as the right Riemann sum: 
%
$\displaystyle \mu_n(X,r) = \frac{n+1}{n} \times\frac{1}{n+1} \sum_{i=1}^n \left\lvert q_X\left(\frac{i}{n+1}\right) - \mu \right\rvert^r \underset{n\to\infty}{\rightarrow} \int_0^1 \lvert q_X(t) - \mu \rvert^r dt$.\\
Using the transformation $t=F_X(x)$, we obtain:
\[ \int_0^1 \lvert q_X(t) - \mu \rvert^r dt = \int_{-\infty}^{+\infty} \lvert q_X(F_X(x)) - \mu \rvert^r dF_X(x) = \int_{-\infty}^{+\infty} \lvert x - \mu\rvert^r dF_X(x) =  m(X,r), \]
from which we conclude that $\displaystyle \lim_{n \rightarrow \infty} \mu_n(X,r) =  m(X,r)$.

Also, by the order of the error term of the right Riemann sum approximation, $O(n^{-1})$,
 we know that \\$\displaystyle \lim_{n \rightarrow \infty} \sqrt{n} \left(\mu_n(X,r) - m(X,r)\right) =0$.
Hence, $m(X,r)$ can be replaced by $\mu_n(X,r)$, even in asymptotics when multiplied by $\sqrt{n}$, and we can write (with the notation $\displaystyle a_n \underset{n\to\infty}{\sim}  b_n$ whenever $\displaystyle \lim_{n \rightarrow \infty} a_n/b_n =1$):
%
%
{\small
\begin{align*}
\hspace{-1cm}
\sqrt{n} (\tilde{m}(X,n,r) - \mu_n(X,r)) &\underset{n\to\infty}{\sim} \sqrt{n} \left( \frac{1}{n} \sum_{i=1}^n  r \left\lvert q_X\left(\frac{i}{n+1}\right) - \mu \right\rvert^{r-1} q'_X\left(\frac{i}{n+1}\right) \sgn\left(q_X\left(\frac{i}{n+1}\right) -\mu\right)  \left(U_i - \frac{i}{n+1}\right) \right). 
\end{align*}
}
We can then conclude to the following convergence in distribution, by using the asymptotics calculated in \cite{Ferguson99} (see eq. (12),(14) and (16) therein), 
\begin{equation*} \label{eq:sample_var_taylor}
\sqrt{n} (\tilde{m}(X,n,r) - \mu_n(X,r))  \underset{n \rightarrow \infty}{\overset{d}\rightarrow}   \int_0^1 r\, \lvert q_X(t) - \mu \rvert^{r-1} q'_X(t) \,\sgn\left(q_X (t) -\mu\right) B(t) dt, 
\end{equation*}
Hence the asymptotic normality of the measure of dispersion. 

We can now conclude the normal joint distribution by using the Cramer-Wold device (the increments of the Brownian motion being independent and normally distributed).

{\sf Step 3: Asymptotic Covariance and Correlation} 

We have, using the first two moments of the Brownian bridge,
\begin{align*}
&\lim_{n \rightarrow \infty} \Cov\left( \sqrt{n} \left(q_n(p) - q_X(p)\right), \sqrt{n} \left(\tilde{m}(X,n,r) - \mu_n(X,r)\right)\right) =\nonumber
\\&\Cov \left( q'_X(p) B(p),  \int_0^1 r \lvert q_X(t) - \mu \rvert^{r-1} q'_X(t) \sgn(q_X(t) -\mu) B(t) dt \right) \nonumber 
\\ &= q'_X(p) \int_0^1 r \lvert q_X(t) - \mu \rvert^{r-1} q'_X(t) \sgn(q_X(t) -\mu) \E[B(p) B(t)] dt  \label{eq:cov_qn_dispersion_meas_Taylor}
\\&= q'_X(p) \int_0^1 r \lvert q_X(t) - \mu \rvert^{r-1} q'_X(t) \sgn(q_X(t) -\mu) q'_X(t) ( \min{(p,t)} -pt) dt.
\end{align*}
Hence, we are left with computing the integral:
\begin{align*}
\hspace*{-1cm}
& \int_0^1 r \lvert q_X(t) - \mu \rvert^{r-1} q'_X(t) \sgn(q_X(t) -\mu) q'_X(t) (\min{(p,t)} -pt) dt 
\\&= \int_0^p r \lvert q_X(t) - \mu \rvert^{r-1} q'_X(t) \sgn(q_X(t) -\mu) q'_X(t)   t(1-p) dt
+ \int_p^1 r \lvert q_X(t) - \mu \rvert^{r-1} q'_X(t) \sgn(q_X(t) -\mu) q'_X(t) p (1-t) dt 
\\&= (1-p) \left( (\lvert q_X(t) - \mu \rvert^r t )\vert_0^p - \int_0^p \lvert q_X(t) - \mu \rvert^r dt \right) + p \left( (\lvert q_X(t) - \mu \rvert^r (1-t))\vert_p^1 + \int_p^1 \lvert q_X(t) - \mu \rvert^r dt \right) 
\\ &=  p \int_p^1 \lvert q_X(t) - \mu \rvert^r dt - (1-p) \int_0^p \lvert q_X(t) - \mu \rvert^r dt 
= p   \int_{q_X(p)}^{\infty} \lvert x- \mu \rvert^r dF_X(x) - (1-p) \int_{-\infty}^{q_X(p)}  \lvert x- \mu \rvert^r  dF_X(x) 
\end{align*}
using partial integration for each integral (with $u' =r \lvert q_X(t) - \mu \rvert^r q'_X(t) \sgn(q_X(t) -\mu) q'_X(t)$, i.e. $u= \lvert q_X(t) - \mu \rvert^r$ and $v$ being t or $1-t$ respectively) for the second equality, and $t=F_X(x)$ in the last one.
Thus, we have overall, recalling the definition of $\tau_r$ in \eqref{eq:def-tau2},
\[ \lim_{n \rightarrow \infty} \Cov( \sqrt{n} (q_n(p) - q_X(p)), \sqrt{n} (\tilde{m}(X,n,r) - m(X,r)) = q'_X(p) \tau_r( \lvert X - \mu \rvert, p) = \frac{1}{f_X(q_X(p))} \tau_r( \lvert X - \mu \rvert, p),\]
%
from which we can deduce the asymptotic correlation, namely 
\begin{align*}
\lim_{n \rightarrow \infty} \Cor(q_n(p), \tilde{m}(X,n,r)) 
= \frac{ \tau_r( \lvert X - \mu \rvert, p)}{\sqrt{p(1-p)} \sqrt{\Var(\lvert X - \mu \rvert^r)}}.
\end{align*} 


\vspace{-2ex}
\subsection{Proofs of Subsection~\ref{ssec:loc-scale-estim}} 
\label{ssec:appendix_qsigma}
\vspace{-2ex}

In the following we present the analogous proofs to Appendix~\ref{ssec:appendix_qn} but with the location-scale quantile estimator. The main task is to compute the asymptotic covariances of the respective joint asymptotic distributions.

\begin{proof}{\bf of Proposition~\ref{prop-q_mu_sigma-general}.}
Let us recall that $q_{n, \hat{\mu}, \hat{\sigma}} (p) = \bar{X}_n + q_Y(p) \tilde{\sigma}_n + o_P(1)$, and the two relations that can be deduced for $\hat{m}(X,n,r)$ from the proof of Theorem~\ref{th:Q-sigma}:
\begin{eqnarray*}
 \hat{m}(X,n,2) &:= &\frac{n-1}{n}\hat{\sigma}_n^2 = \tilde{\sigma}_n^2 +o_P(1) =: \tilde{m}(X,n,2) + o_P(1)\;\text{(obtained from~\eqref{eq:rel_sigma_hat_tilde1}, and~\eqref{eq:rel_sigma_hat_tilde2})}\\
 \hat{m}(X,n,1) &:=& \hat{\theta}_n = \tilde{\theta}_n + (2F_X(\mu)-\mu) (\bar{X}_n - \mu)  + o_P(1)=: \tilde{m}(X,n,1) + (2F_X(\mu)-\mu) (\bar{X}_n - \mu) + o_P(1) \; \text{(from~\eqref{eq:theta_representation})},
\end{eqnarray*}
which can be rewritten, for any $r=1,2$, as:
\[ \hat{m}(X,n,r) = \tilde{m}(X,n,r) + (2-r) (2F_X(\mu) -1) (\bar{X}_n - \mu) + o_P(1). \]
Since we have iid sums (and finite fourth moment by assumption), we can apply the bivariate CLT to obtain:
\begin{eqnarray*} 
 n^{1/2} \left( \begin{pmatrix} q_{n, \hat{\mu}, \hat{\sigma}}(p) \\ \hat{m}(X,n,r)  \end{pmatrix} - \begin{pmatrix} q_X(p) \\ m(X,r) \end{pmatrix} \right) &=&
 n^{1/2} \left( \begin{pmatrix} \bar{X}_n+ q_Y(p) \tilde{\sigma}_n +o_P(1) \\ \tilde{m}(X,n,r) + (2F_X(\mu)-1) (\bar{X}_n - \mu) + o_P(1)  \end{pmatrix} - \begin{pmatrix} q_X(p) \\ m(X,r) \end{pmatrix} \right) \nonumber
\\ &  \underset{n\to\infty}{\overset{d}\longrightarrow} &\mathcal{N} (0, \Lambda^{(r)})
\end{eqnarray*}
where the covariance matrix $\Lambda^{(r)}=(\Lambda^{(r)}_{ij}), i,j=1,2$, has to be determined. The component $\Lambda^{(r)}_{22}$ is already known from equation~\eqref{eq:p65-2}:
\[ \Lambda^{(r)}_{22} = \lim_{n \rightarrow \infty} \Var(\hat{m}(X,n,r)) = \sigma^{2r} \Var\left(\lvert Y \rvert^r + (2-r) (2 F_Y(0) - 1) Y\right). \]
Let us compute the other components directly. We have
\begin{align*}
\Lambda^{(r)}_{11} &= \lim_{n \rightarrow \infty} \Var(q_{n, \hat{\mu}, \hat{\sigma}}(p)) = \lim_{n \rightarrow \infty} \Var(\bar{X}_n + q_Y(p) \hat{\sigma}_n)
= \lim_{n \rightarrow \infty}  \left(\Var(\bar{X}_n) + q_Y^2(p) \Var(\hat{\sigma}_n) + 2 q_Y(p) \Cov(\bar{X}_n, \hat{\sigma}_n) \right)
\\ &= \sigma^2 + q_Y^2(p) \frac{\mu_4-\sigma^4}{(2\sigma)^2} + 2q_Y(p) \frac{\mu_3}{2\sigma}
= \sigma^2\left (1 + q_Y^2 (p) \frac{\E[Y^4]-1}{4} + q_Y(p) \E[Y^3]\right)  \numberthis \label{eq:var_q_mu_sigma_v2}
\end{align*} 
where we used the Delta method to derive from $\Var(\hat{\sigma}_n^2) = \mu_4 - \sigma^4$ and $\Cov(\bar{X}_n, \hat{\sigma}_n^2) = \mu_3$ the variance and covariance, respectively, in the case of $\hat{\sigma}_n$. We are left with
{\small
\begin{align*}
\Lambda^{(r)}_{12}=\Lambda^{(r)}_{21}& = \lim_{n \rightarrow \infty} \Cov(\sqrt{n} q_{n, \hat{\mu}, \hat{\sigma}}(p), \sqrt{n} \hat{m}(X,n,r)) 
\\ &= \lim_{n \rightarrow \infty} \Cov\left(\sqrt{n} (\bar{X}_n + q_Y(p) \hat{\sigma}_n), \sqrt{n} \left(\tilde{m}(X,n,r) + (2-r) (2F_X(\mu) -1) (\bar{X}_n - \mu)\right)\right)
\\ &= \lim_{n \rightarrow \infty} \Cov\left(\sqrt{n} (\bar{X}_n , \sqrt{n} \left(\tilde{m}(X,n,r) + (2-r) (2F_X(\mu) -1) (\bar{X}_n - \mu)\right)\right) 
\\ &\quad + q_Y(p)  \lim_{n \rightarrow \infty} \Cov\left(\sqrt{n}  \hat{\sigma}_n, \sqrt{n} \left(\tilde{m}(X,n,r) + (2-r) (2F_X(\mu) -1) (\bar{X}_n - \mu)\right)\right). \numberthis \label{eq:cov_q_mu_sigma_mu_Xnr_v2}
\end{align*} 
}
We proceed in two steps, considering separately the asymptotic covariance with the sample mean (first covariance term in \eqref{eq:cov_q_mu_sigma_mu_Xnr_v2}) and that with the sample standard deviation (second covariance term of \eqref{eq:cov_q_mu_sigma_mu_Xnr_v2}).  

In both steps we use the same techniques as in the proof of Theorem~\ref{th:Q-sigma}, when using the bivariate central limit theorem. This means, we compute the covariances by looking at the i-th element of the iid sums:
\[ \Cov( X_i, \lvert X_i - \mu \rvert^r + (2-r) (2F_X(\mu) -1) (X_i - \mu) )\;\text{and}\; 
\Cov( (X_i-\mu)^2, \lvert X_i - \mu \rvert^r + (2-r) (2F_X(\mu) -1) (X_i - \mu)). \]
{\it Step 1: Covariance with the sample mean}
\begin{align*}
 \Cov ( X_i, &\;\lvert X_i - \mu \rvert^r + (2-r) (2F_X(\mu) -1) (X_i - \mu) )
\\ &=\E[X_i \lvert X_i - \mu\rvert^r] - \mu\E[ \lvert X_i - \mu\rvert^r] + (2-r) (2F_X(\mu)-1) \Var(X_i)
\\ &= \E[ (X_i-\mu) \lvert X_i - \mu\rvert^r] + (2-r) (2F_Y(0)-1) \sigma^2
\\ &= \sigma^{r+1} \E[ \lvert Y_i \rvert^{r+1} \1_{(Y_i >0)}] -\sigma^{r+1} \E[ \lvert Y_i \rvert^{r+1} \1_{(Y_i < 0)}] + (2-r) (2F_Y(0)-1) \sigma^{2}
\\ &= \sigma^{r+1} \E[ Y_i^{r+1} ] - \sigma^{r+1} \E[ Y_i^{r+1} \1_{(Y_i < 0)} (1+ (-1)^{r+1})] + (2-r) (2F_Y(0)-1) \sigma^{r+1}
\\ &= \sigma^{r+1} \E[ Y_i^{r+1}] - \sigma^{r+1} 2 (2-r) \E[ Y_i^{r+1} \1_{(Y_i < 0)}] + (2-r) (2F_Y(0)-1) \sigma^{r+1}
\\ &= \sigma^{r+1}\left( \E[ Y_i^{r+1} ] + (2-r) \left(2F_Y(0)-1 -2\E[ Y_i^{r+1} \1_{(Y_i < 0)}]\right) \right)
\end{align*}
where, for the transformation in the last three lines, we used that, since we only consider $r=1,2$, we can write $(2-r) \sigma^2 = (2-r) \sigma^{r+1}$ and $(1+(-1)^{r+1}) = 2(2-r)$.

{\it Step 2: Covariance with the sample standard deviation}

 Considering the covariance with the sample variance, we can write
\begin{align*}
\Cov \big( (X_i-\mu)^2, &\; \lvert X_i - \mu \rvert^r + (2-r) (2F_X(\mu) -1) (X_i - \mu) \big)
\\ &= \E[\lvert X_i - \mu \rvert^{r+2}] - \sigma^2\E[\lvert X_i - \mu \rvert^{r}] +  (2-r) (2F_X(\mu) -1)\E[(X_i -\mu)^3]
\\ &= \sigma^{r+2} \left( \E[\lvert Y_i \rvert^{r+2}] - \E[\lvert Y_i \rvert^{r}] +  (2-r) (2F_Y(0) -1)\E[Y_i^3] \right)
\end{align*}
where we used in the last line the fact that $(2-r) \sigma^3 = (2-r) \sigma^{r+2}$ for $r=1,2$.

Thus, $\displaystyle  \lim_{n \rightarrow \infty} \Cov(\sqrt{n} \hat{\sigma}_n^2, \sqrt{n} \hat{m}(X,n,r)) = \sigma^{r+2} \left( \E[\lvert Y \rvert^{r+2}] - \E[\lvert Y \rvert^{r}] +  (2-r) (2F_Y(0) -1)\E[Y^3] \right) $, from which we obtain the covariance with the sample standard deviation, by applying the Delta method: 
\begin{equation*} \label{eq:cov_Sn_with_mu-Xnr}
\lim_{n \rightarrow \infty} \Cov(\sqrt{n} \hat{\sigma}_n, \sqrt{n} \hat{m}(X,n,r)) = \frac{\sigma^{r+1}}{2} \left( \E[\lvert Y \rvert^{r+2}] - \E[\lvert Y \rvert^{r}] +  (2-r) (2F_Y(0) -1)\E[Y^3] \right). 
\end{equation*}
Putting together the results of both steps in equation~\eqref{eq:cov_q_mu_sigma_mu_Xnr_v2} gives $\Lambda^{(r)}_{12}$. The asymptotic correlation follows by dividing by the asymptotic variances $\Lambda^{(r)}_{11}$ and $\Lambda^{(r)}_{22}$ computed before.
The asymptotics involving general functions $h_1,h_2$ follows by applying the Delta method.
\end{proof}

\begin{proof}{\bf of Proposition~\ref{prop-q_mu_sigma-MedianAD}.}
To use the bivariate CLT in this case, recall the Bahadur representation \eqref{eq:MedianAD_bahadur_vers2}  for the sample MedianAD, and the asymptotic equivalence of $\hat{\sigma}_n^2$ and $\tilde{\sigma}_n^2$ (see \eqref{eq:rel_sigma_hat_tilde1} and \eqref{eq:rel_sigma_hat_tilde2}). 
\\ Thus, as we have iid sums (and finite fourth moment by assumption), we can apply the bivariate CLT and obtain:
\begin{align*} 
n^{1/2} \left( \begin{pmatrix} q_{n, \hat{\mu}, \hat{\sigma}}(p) \\ \hat{\xi}_n  \end{pmatrix} - \begin{pmatrix} q_X(p) \\ \xi \end{pmatrix} \right) &=
 n^{1/2} \left( \begin{pmatrix} \bar{X}_n+ q_Y(p) \tilde{\sigma} +o_P(1) \\ \frac{\frac{1}{n}\sum_{i=1}^n \left(\alpha \1_{(x \leq \nu)} - f_X(\nu) \1_{(\nu - \xi < x \leq \nu + \xi)} \right)  - \frac{1}{2}\left(  \alpha -f_X(\nu) \right) }{\beta f_X(\nu)} + \Delta_n  \end{pmatrix} - \begin{pmatrix} q_X(p) \\ \xi \end{pmatrix} \right)  \nonumber
\\ & \underset{n\to\infty}{\overset{d}\longrightarrow}  \mathcal{N} (0, \Pi),
\end{align*}
with $\alpha =f_X(\nu +\xi) - f_X(\nu - \xi)$, $\beta = f_X(\nu +\xi) + f_X(\nu - \xi)$ and  $\Pi=(\Pi_{ij}), i,j=1,2,$ to be computed.
\\ Since $\Pi_{11}$ and $\Pi_{22}$ have been already evaluated in \eqref{eq:var_q_mu_sigma_v2} and \eqref{eq:MedianAD_asympt_var}, respectively, in the proof of Theorem~\ref{thm:Q-MedianAD},
%
%
we are left with computing the asymptotic covariance $\displaystyle \Pi_{12}=\Pi_{21}=\lim_{n \rightarrow \infty} \Cov\left(\sqrt{n} q_{n, \hat{\mu}, \hat{\sigma}}, \sqrt{n} \hat{\xi}_n\right)=$
\begin{equation}\label{eq:C12qxi}
\lim_{n \rightarrow \infty} \Cov\left(\sqrt{n} (\bar{X}_n + q_Y(p) \tilde{\sigma}_n), \sqrt{n} \frac{\frac{1}{n}\sum_{i=1}^n \left(\alpha \1_{(x \leq \nu)} - f_X(\nu) \1_{(\nu - \xi < x \leq \nu + \xi)} \right)  - \frac{1}{2}\left(  \alpha -f_X(\nu) \right) }{\beta f_X(\nu)} \right).
\end{equation} 
We proceed in two steps, looking separately at $\displaystyle \lim_{n \rightarrow \infty} \Cov(\sqrt{n} \bar{X}_n, \sqrt{n} \hat{\xi}_n)$ and $\displaystyle \lim_{n \rightarrow \infty} \Cov(\sqrt{n} \tilde{\sigma}_n^2, \sqrt{n} \hat{\xi}_n)$.
For the latter, we then use the Delta method to obtain $\displaystyle \lim_{n \rightarrow \infty} \Cov(\sqrt{n} \tilde{\sigma}_n, \sqrt{n} \hat{\xi}_n)$ instead.\\
Since we have iid sums, we are left with computing the covariance of the i-th element of the two sums each.

{\it Step 1: Covariance with the sample mean.}
Recall that $\P(X \leq \nu) = 1/2$ and $\P(\lvert X - \nu \rvert \leq \xi) = 1/2$. Then,
\begin{align*}
\hspace*{-.7cm} \Cov\left( X_i,  \frac{\alpha \1_{(x \leq \nu)} - f_X(\nu) \1_{(\nu - \xi < X_i \leq \nu + \xi)}}{\beta f_X(\nu)} \right) 
 &= \frac{1}{\beta f_X(\nu)} \left(\alpha\E[X_i \1_{(X_i \leq \nu)}] - f_X(\nu)\E[X_i \1_{(\nu - \xi < X_i \leq \nu + \xi)}] - \frac{\mu}{2}(\alpha - f_X(\nu))\right)
\\ &= \frac{\sigma}{\beta f_X(\nu)} \left(\alpha\E[Y_i \1_{(Y_i \leq \frac{\nu -\mu}{\sigma})}] - f_X(\nu)\E[Y_i \1_{(\frac{\nu - \xi -\mu}{\sigma} < Y_i \leq \frac{\nu + \xi-\mu}{\sigma})}]\right),
\end{align*}
using $X_i = \mu+ \sigma Y_i$ for the second equality.
We deduce that 
\begin{equation} \label{eq:cov_mean_xi_n}
\lim_{n \rightarrow \infty} \Cov(\sqrt{n} \bar{X}_n, \sqrt{n} \hat{\xi}_n) = 
\frac{\sigma}{\beta} \left(\frac{\alpha}{f_X(\nu)}\E[Y \1_{(Y \leq \frac{\nu -\mu}{\sigma})}] -\E[Y \1_{(\frac{\nu - \xi -\mu}{\sigma} < Y \leq \frac{\nu + \xi-\mu}{\sigma}) }]\right). 
\end{equation}  
{\it Step 2: Covariance with the sample variance.} 
\begin{align*}
& \Cov \left( (X_i-\mu)^2,  \frac{\alpha \1_{(X_i \leq \nu)} - f_X(\nu) \1_{(\nu - \xi < X_i \leq \nu + \xi)} } {\beta f_X(\nu)}\right) 
\\  &= \frac{1}{\beta f_X(\nu)} \left(\alpha\E[(X_i-\mu)^2 \1_{(X_i \leq \nu)}] - f_X(\nu)\E[(X_i-\mu)^2 \1_{(\nu - \xi < X_i \leq \nu + \xi)}] - \frac{\sigma^2}{2}(\alpha - f_X(\nu))\right)
\\ &= \frac{\sigma^2}{\beta f_X(\nu)} \left(\alpha\E[Y_i^2 \1_{(Y_i \leq \frac{\nu -\mu}{\sigma})}] - f_X(\nu)\E[Y_i^2 \1_{(\frac{\nu - \xi -\mu}{\sigma} < Y_i \leq \frac{\nu + \xi-\mu}{\sigma})}] - \frac12 (\alpha - f_X(\nu)) \right).
\end{align*}
Hence, 
\[ \lim_{n \rightarrow \infty} \Cov(\sqrt{n} \hat{\sigma}_n^2, \sqrt{n} \hat{\xi}_n) =
\frac{\sigma^2}{\beta}\left(\frac{\alpha}{f_X(\nu)}\E[Y^2 \1_{(Y \leq \frac{\nu -\mu}{\sigma})}] -\E[Y^2 \1_{(\frac{\nu - \xi -\mu}{\sigma} < Y \leq \frac{\nu + \xi-\mu}{\sigma})}] - \frac12 \left(\frac{\alpha}{f_X(\nu)} -1\right)\right) \]
and by the Delta method
\begin{equation} \label{eq:cov_Sn_with_MedianAD}
\hspace{-.9cm} \lim_{n \rightarrow \infty} \Cov(\sqrt{n} \hat{\sigma}_n, \sqrt{n} \hat{\xi}_n) =  \frac{\sigma}{2 \beta} \left(\frac{\alpha}{f_X(\nu)}\E[Y^2 \1_{(Y \leq \frac{\nu -\mu}{\sigma})}] -\E[Y^2 \1_{(\frac{\nu - \xi -\mu}{\sigma} < Y \leq \frac{\nu + \xi-\mu}{\sigma})}] - \frac12 \left(\frac{\alpha}{f_X(\nu)} -1\right) \right).
\end{equation}
Combining \eqref{eq:C12qxi}, \eqref{eq:cov_mean_xi_n} and \eqref{eq:cov_Sn_with_MedianAD} provides 
\begin{align*}
&\lim_{n \rightarrow \infty} \Cov(\sqrt{n} q_{n, \hat{\mu}, \hat{\sigma}}, \sqrt{n} \hat{\xi}_n) = \lim_{n \rightarrow \infty} \Cov(\sqrt{n} ( \bar{X}_n + q_Y(p) \hat{\sigma}_n), \sqrt{n} \hat{\xi}_n) 
\\& = \frac{\sigma}{2 \beta} \left(\frac{\alpha}{f_X(\nu)}\E[2Y \1_{(Y \leq \frac{\nu -\mu}{\sigma})}] -\E[2Y  \1_{(\frac{\nu - \xi -\mu}{\sigma} < Y \leq \frac{\nu + \xi-\mu}{\sigma})}]  \right.
\\ &\quad \left. +  q_Y(p) \left(\frac{\alpha}{f_X(\nu)}\E[Y^2 \1_{(Y \leq \frac{\nu -\mu}{\sigma})}] -\E[Y^2 \1_{(\frac{\nu - \xi -\mu}{\sigma} < Y \leq \frac{\nu + \xi-\mu}{\sigma})}] - \frac12\left(\frac{\alpha}{f_X(\nu)} - 1\right) \right) \right)
\end{align*}
from which \eqref{eq:cov_qsigma_locationscale-MedianAD} follows, by plugging in the explicit expressions for $\beta, \alpha, f_X(\nu)$ in terms of $f_Y$ (e.g. $f_X(\nu) = \frac{1}{\sigma} f_Y(\frac{\nu- \mu}{\sigma})$).
The asymptotics involving general functions $h_1, h_2$ follows by applying the Delta method.
%
%
\end{proof}

\vspace{-2ex}
\subsection{Proofs of Subsection~\ref{ssec:sample_size_theor}} \label{ssec:appendix_samplesize}
\vspace{-2ex}
The only proof in this subsection concerns Theorem~\ref{thm:general-longer-sample}. To better structure the proof, we formulate the first result of Theorem~\ref{thm:general-longer-sample}, equation~\eqref{eq:cov_longer_samplesize-general}, as a lemma and prove it separately. Using this lemma, we then prove Theorem~\ref{thm:general-longer-sample}.

As in the formulation of Theorem~\ref{thm:general-longer-sample}, for the sake of readibility, we use here in the proofs a notation for the quantile estimator $\hat{q}_n$ which only implicitly involves the order of the quantile $p$.

\begin{lemma} \label{lemma:1}
Let $v,w$ be positive integers and consider an iid sample with parent rv $X$ (with mean $\mu$ and variance $\sigma^2$, if defined). Given the respective smoothness and moment conditions (given in Theorem~\ref{thm:general-longer-sample}), the asymptotic covariance between functions of a quantile estimator $\hat{q}_{vn}$ with sample size $vn$ (be it $h_1({q}_{vn}), h_1({q}_{vn, \hat{\mu},\hat{\sigma}})$ or $h_1({q}_{vn, \hat{\sigma}})$) and the functional of the measure of dispersion estimator with sample size $wn$, $h_2(\hat{D}_{i,wn})$ for $i \in \{1,2,3 \}$, simply is
\begin{equation*} 
 \lim_{n \rightarrow \infty} \Cov\left(\sqrt{n} \, h_1(\hat{q}_{vn}), \sqrt{n}\, h_2(\hat{D}_{i,wn})\right) = 
 \frac1{\max{\left(v,w\right)}}\, \displaystyle \lim_{n \rightarrow \infty} \Cov\left(\sqrt{n} \,h_1(\hat{q}_{n}), \sqrt{n}\, h_2(\hat{D}_{i,n})\right).
\end{equation*} 
\end{lemma}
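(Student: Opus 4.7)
The plan is to exhibit a common influence-function representation for every estimator in play, embed the two samples as prefixes of a single iid stream, and reduce the covariance to a counting exercise over overlapping indices. Under the hypothesized smoothness and moment conditions, each estimator admits an asymptotic linearisation: for the sample quantile this is the Bahadur expansion \eqref{eq:qn_Bahadur}; for the sample MedianAD, \eqref{eq:MedianAD_bahadur_vers2}; for the sample MAD, \eqref{eq:theta_representation}; for the sample variance, the standard CLT form $\hat\sigma_m^2 - \sigma^2 = \frac{1}{m}\sum_{j=1}^m ((X_j-\mu)^2 - \sigma^2) + o_P(m^{-1/2})$; and for the location-scale quantile estimator, the expansion $\bar X_m + q_Y(p)\hat\sigma_m - q_X(p)$ reduces to an iid sum via the Delta method applied to $(\bar X_m,\hat\sigma_m^2)$, as already carried out in the proof of Proposition~\ref{prop-q_mu_sigma-general}. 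Combined with the univariate Delta method for $h_1,h_2$, this yields bounded-variance influence functions $\psi_Q,\psi_D$ such that, for $m\to\infty$,
\[
h_1(\hat q_m) - h_1(q_X(p)) = \frac{h_1'(q_X(p))}{m}\sum_{j=1}^m \psi_Q(X_j) + o_P(m^{-1/2}),
\]
and analogously $h_2(\hat D_{i,m}) - h_2(D_i) = \frac{h_2'(D_i)}{m}\sum_{j=1}^m \psi_D(X_j) + o_P(m^{-1/2})$.

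Next, I would interpret the ``different sample sizes'' setup by taking the two samples to be nested prefixes $(X_1,\ldots,X_{vn})$ and $(X_1,\ldots,X_{wn})$ of a single iid sequence. By Slutsky's theorem, the $o_P$ remainders may be dropped for the limiting covariance. Expanding the product and using that $\Cov(\psi_Q(X_j),\psi_D(X_k))=0$ for $j\neq k$, only diagonal contributions survive, and only on the overlapping range $j\le \min(vn,wn)$:
\[
\lim_{n\to\infty}\Cov\!\left(\sqrt{n}\, h_1(\hat q_{vn}),\,\sqrt{n}\, h_2(\hat D_{i,wn})\right)
= \frac{n\,\min(vn,wn)}{(vn)(wn)}\, C
= \frac{1}{\max(v,w)}\, C,
\]
where $C := h_1'(q_X(p))\, h_2'(D_i)\,\Cov(\psi_Q(X_1),\psi_D(X_1))$. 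Specialising the same computation to $v=w=1$ identifies $C$ with $\lim_n \Cov\!\left(\sqrt{n}\, h_1(\hat q_n),\,\sqrt{n}\, h_2(\hat D_{i,n})\right)$, which delivers \eqref{eq:cov_longer_samplesize-general}. The correlation formula \eqref{eq:cor_longer_samplesize-general} then follows by applying the same counting argument to each marginal variance (which picks up factors $1/v$ and $1/w$ respectively) and taking the ratio, producing $\sqrt{\min(v,w)/\max(v,w)}$.

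The main obstacle I anticipate is not the arithmetic but justifying that the nested-prefix embedding faithfully represents the asymptotic condition $\lim n_v/n_w = v/w$ stated in Section~\ref{ssec:sample_size_theor}. For non-nested samples whose sizes merely satisfy this ratio, one would partition their union into a common block and two disjoint complementary blocks: independence of the $X_j$'s annihilates the disjoint pieces' contribution to the cross-covariance, while the common block has cardinality asymptotically $\min(vn,wn)\sim\min(v,w)\,n$, so the counting collapses to the same factor $1/\max(v,w)$. A secondary bookkeeping issue is that for the location-scale quantile estimator the influence function $\psi_Q(X) = (X-\mu) + q_Y(p)\cdot\frac{1}{2\sigma}\bigl((X-\mu)^2 - \sigma^2\bigr)$ involves both $\bar X_m$ and $\hat\sigma_m$; but it is still a single function of $X_j$ evaluated pointwise, so the counting argument goes through unchanged.
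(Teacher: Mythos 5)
Your proposal is correct and follows essentially the same route as the paper: both rest on the iid-sum (Bahadur/influence-function) linearisations of each estimator with $o_P(m^{-1/2})$ remainders, the vanishing of cross-covariances over disjoint indices, and a count of the overlapping block of size $\min(vn,wn)$ yielding the factor $\min(v,w)/(vw)=1/\max(v,w)$. The paper phrases the reduction as splitting the longer sample into the shorter one plus a disjoint remainder and disposes of the rest terms via Cauchy--Schwarz, but this is the same argument as your diagonal-counting version.
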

\begin{proof}{\bf of Lemma~\ref{lemma:1}.}
We want to prove the scaling law
\[ \lim_{n \rightarrow \infty} \Cov(\sqrt{n} \, h_1(\hat{q}_{vn}), \sqrt{n}\, h_2(\hat{D}_{i,wn})) =  \frac{ {\displaystyle \lim_{n \rightarrow \infty}} \Cov(\sqrt{n} \, h_1(\hat{q}_{n}), \sqrt{n}\, h_2(\hat{D}_{i,n}))}{\max{\left(v,w\right)}} \]
in the general case considering all three dispersion measure estimators ($i=1,2,3$) and the three possible quantile estimators. All the proofs in those nine different cases share a common approach that we present first (before considering each case).
\\ 
{\it General procedure -}  Consider two sequences of random variables $\displaystyle A_n = \frac{1}{n}\sum_{i=1}^n a(X_i) +a_{r,n}$ and \\$\displaystyle B_n = \frac{1}{n}\sum_{i=1}^n b(X_i) +b_{r,n}$, which are functions of $X_i$ and consist of a sum of two given parts: One linear part, an iid sum of functions $a$ and $b$, respectively, of $X_i$, denoted by $a(X_i)$ or $b(X_i)$, and a second part called the `rest' denoted by $a_{r,n}$ and $b_{r,n}$ respectively.\\
Let us compute the asymptotic covariance $\displaystyle \lim_{n \rightarrow \infty} \Cov( \sqrt{n} A_{vn}, \sqrt{n} B_{wn})$ assuming $w>v$, the reverse case being shown analogously.
%
%
We proceed in two steps. The first step consists of splitting the longer sample as $B_{wn} = B_{vn} + \text{`rest'}$ to have a covariance of equal sample size that we already wnow how to handle, as  $\displaystyle \lim_{n \rightarrow \infty} \Cov( \sqrt{n} A_{vn}, \sqrt{n} B_{vn}) = \frac{1}{v} \displaystyle \lim_{n \rightarrow \infty}   \Cov( \sqrt{n} A_{n}, \sqrt{n} B_{n})$.
The second step consists of showing why the `rest', when splitting $B_{wn}$, is negligible in the calculation of the covariance. Assuming this second step, we have
\begin{equation}\label{eq:cov_with_rests}
\hspace{-1.1cm} \lim_{n \rightarrow \infty} \Cov(\sqrt{n} A_{vn}, \sqrt{n} b_{r,wn}) = 0 \quad \text{and}\quad
\lim_{n \rightarrow \infty} \Cov(\sqrt{n} A_{vn}, \sqrt{n} b_{r,vn}) = 0,
\end{equation}
and noticing that $\displaystyle \lim_{n \rightarrow \infty} \Cov\left(\sqrt{n} A_{vn}, \frac{\sqrt{n}}{wn}\sum_{i=vn+1}^{wn}  b(X_i) \right) = 0$ because this is the covariance of iid random variables over disjoint samples, 
we can write 
\begin{align*}
&\hspace*{-0.2cm} \lim_{n \rightarrow \infty} \Cov( \sqrt{n} A_{vn}, \sqrt{n} B_{wn}) 
 = \lim_{n \rightarrow \infty} \Cov\left(\sqrt{n} A_{vn},  \sqrt{n} \left(\frac{1}{wn}\sum_{i=1}^{wn} b(X_i) +b_{r,wn}\right)\right)
= \lim_{n \rightarrow \infty} \Cov\left(\sqrt{n} A_{vn}, \sqrt{n} \frac{1}{wn}\sum_{i=1}^{wn} b(X_i)\right) \\ 
&\qquad = \lim_{n \rightarrow \infty} \Cov\left(\sqrt{n} A_{vn}, \sqrt{n} \times\frac{v}{w}\left(\frac1{vn}\sum_{i=1}^{vn} b(X_i) +b_{r,vn}\right)\right) 
 +  \lim_{n \rightarrow \infty} \Cov\left(\sqrt{n} A_{vn}, \sqrt{n} \left(\frac{1}{wn}\sum_{i=vn+1}^{wn} b(X_i) -\frac{v}{w}\, b_{r,vn} \right)\right)
\\ &\qquad= \lim_{n \rightarrow \infty} \Cov\left(\sqrt{n} A_{vn}, \sqrt{n} \times\frac{v}{w}\times B_{vn} \right)  
 =   \frac{\displaystyle \lim_{n \rightarrow \infty} \Cov(\sqrt{n} A_{n}, \sqrt{n} B_{n} )}{\max{(v,w)}}.
\end{align*}
By Cauchy-Schwarz, equations~\eqref{eq:cov_with_rests} will equal to 0 if we have that $\displaystyle \lim_{n \rightarrow\infty} \Var(\sqrt{n} \,b_{r,n}) =0$.\\
%
Thus, to show the scaling law for the quantile and measure of dispersion estimators, we will prove the following:
(i) We can express the quantile and measure of dispersion estimators in the form of $A_n, B_n$ respectively;\\ 
(ii) $\displaystyle \lim_{n \rightarrow\infty} \Var(\sqrt{n} \,b_{r,n})=0$ (covers the case $w>v$);\, (iii) $\displaystyle \lim_{n \rightarrow\infty} \Var(\sqrt{n} \,a_{r,n})=0$ (covers the case $w<v$). \\
Let us finish the proof commenting on why these six different estimators fulfil those properties, meaning that for each estimator we will show that the representation (i) is possible and that (ii) holds (which is equivalent to (iii)). \\[-5ex]
\begin{itemize}
\item {\it Sample Variance $\hat{\sigma}_n^2$.} \;
Recall from \eqref{eq:rel_sigma_hat_tilde1} that $\displaystyle\hat{\sigma}_n^2 = \frac{n}{n-1} \tilde{\sigma}_n^2 +  \frac{n}{n-1} (\bar{X}_n - \mu)^2$. Hence, we have $\frac{n-1}{n} A_n = \tilde{\sigma}_n^2$ with $a_{r,n} = \frac{n}{n-1} (\bar{X}_n - \mu)^2$ (thus, (i) holds) and we wnow, by \eqref{eq:rel_sigma_hat_tilde2} that \\
$\displaystyle \lim_{n \rightarrow \infty} \Var(\sqrt{n} (\bar{X}_n - \mu)^2) = 0$ (i.e. (ii) holds).
\item {\it Sample MAD $\hat{\theta}_n$.}\; The calculations were done in the case $r=1$ in Part 2 of the proof of Theorem~\ref{th:Q-sigma}: From \eqref{eq:theta_representation} we wnow that
$\displaystyle \hat{\theta}_n = \tilde{\theta}_n + (2 F_X(\mu) -1) (\bar{X}_n - \mu) + o_P(n^{-1/2}) $. Thus (ii) is fulfilled.
Further, recall that $\tilde{\theta}_n$ is an iid sum so (i) is fulfilled. 
\item {\it Sample MedianAD $\hat{\xi}_n$}.\;
 The Bahadur representation \eqref{eq:MedianAD_bahadur_vers2} for the sample MedianAD gives us directly (i):
\[ \hat{\xi}_n - \hat{\xi} = \frac{\frac{1}{n}\sum_{i=1}^n \left(\alpha \1_{(x \leq \nu)} - f_X(\nu) \1_{(\nu - \xi < x \leq \nu + \xi)} \right)  - \frac{1}{2}\left(  \alpha -f_X(\nu) \right) }{\beta f_X(\nu)} + \Delta_n, \;\text{~with~}\, \Delta_n \overset{a.s.}{=} o_P(n^{-1/2}).
\]
Again, as for the MAD, by the convergence to 0 in probability of $\Delta_n$, (ii) is fulfilled.
\item {\it Sample quantile $q_n$.}\;
As for the Sample MedianAD,  by the Bahadur representation  we can show that (i) and (ii) are fulfilled:  We have from \eqref{eq:qn_Bahadur}
\[ q_n (p) = q_X (p) + \frac{1- F_{n,X} (q_X(p)) - (1-p)}{f_X(q_X(p))} + R_{n,p} 
\quad\, \text{with}\quad R_{n,p}  = o_P(n^{-1/2}).
\]
Analogous to the MAD and MedianAD, this implies that for $R_{n,p}$, (ii) is fulfilled.
\item {\it Location scale quantile (wnown mean) $q_{n, \hat{\sigma}}$.} \,This case can be seen as a functional of the sample variance:  \\$q_{n, \hat{\sigma}} = q_Y(p) \sqrt{\hat{\sigma}_n^2}$. Thus, we simply apply the Delta method to the result from the case with $\hat{\sigma}_n^2$ and we are done (no need to verify (i) and (ii)).
\item {\it Location scale quantile (unwnown mean) $q_{n,\hat{\mu}, \hat{\sigma}}$.}  Recall, that $q_{n,\hat{\mu}, \hat{\sigma}} = \bar{X}_n + \hat{\sigma}_n q_{n, \hat{\sigma}}$ and $\bar{X}_n$ is already an iid sum. Thus, in comparison with the case of $q_{n,\hat{\sigma}}$, nothing changes.
\end{itemize}
For general functions $h_1, h_2$, we simply need to use the first order of the Taylor expansion (and the shown argumentation then holds for this linear approximation), as higher orders are asymptotically negligible.
\end{proof}

\begin{proof}{\bf Theorem~\ref{thm:general-longer-sample}.}
Using the result from Lemma~\ref{lemma:1}, it is straightforward to show the relation for the correlation.
We note that, by the asymptotic normality results for all the different quantile estimators and measure of dispersion estimators, we obtain, for any fixed integer $v,w>0$,
\begin{align*}
&\hspace*{-1.2cm} \lim_{n \rightarrow \infty} \Var(q_{vn}) = \lim_{n \rightarrow \infty} \frac{\Var(q_n)}{v}; ~\lim_{n \rightarrow \infty} \Var(q_{vn, \hat{\mu}, \hat{\sigma}}) = \lim_{n \rightarrow \infty} \frac{\Var(q_{n, \hat{\mu}, \hat{\sigma}})}{v}; ~\lim_{n \rightarrow \infty} \Var(q_{vn, \hat{\sigma}}) = \lim_{n \rightarrow \infty} \frac{\Var(q_{n, \hat{\sigma}})}{v};
\\ & \lim_{n \rightarrow \infty} \Var(\hat{\sigma}_{wn}^2) = \lim_{n \rightarrow \infty} \frac{\Var(\hat{\sigma}_n^2)}{w}; \quad
\lim_{n \rightarrow \infty} \Var(\hat{\theta}_{wn}) = \lim_{n \rightarrow \infty} \frac{\Var(\hat{\theta}_n)}{w}; \quad
\lim_{n \rightarrow \infty} \Var(\hat{\xi}_{wn}) = \lim_{n \rightarrow \infty} \frac{\Var(\hat{\xi}_n)}{w}
\end{align*}
from which we deduce that
\begin{align*}
&\hspace*{-0.3cm}  \lim_{n \rightarrow \infty} \Cor\left(\sqrt{n} h_1(\hat{q}_{vn}), \sqrt{n} h_2(\hat{D}_{i,wn})\right) = \lim_{n \rightarrow \infty} \frac{\Cov\left(\sqrt{n} h_1(\hat{q}_{vn}), \sqrt{n} h_2(\hat{D}_{i,wn})\right)}{\sqrt{\Var(h_1(\hat{q}_{vn}) )} \sqrt{\Var(h_2(\hat{D}_{i,wn}))}}
= \lim_{n \rightarrow \infty} \frac{ \frac{\Cov\left(\sqrt{n} h_1(\hat{q}_{n}), \sqrt{n} h_2(\hat{D}_{i,n})\right)}{\max{(v,w)}}}{\sqrt{\frac{\Var(h_1(\hat{q}_{n}))}{v}} \sqrt{\frac{\Var(h_2 (\hat{D}_{i,n}))}{w}}}
\\ &= \sqrt{\frac{vw}{\max^2{(v,w)}}} \times \lim_{n \rightarrow \infty} \Cor\left(\sqrt{n} h_1(\hat{q}_{n}), \sqrt{n} h_2(\hat{D}_{i,n})\right)  = \sqrt{\frac{\min{(v,w)}}{\max{(v,w)}}} \times \lim_{n \rightarrow \infty} \Cor\left(\sqrt{n} h_1(\hat{q}_{n}), \sqrt{n} h_2(\hat{D}_{i,n})\right),
\end{align*}	
where the second equality follows from Lemma~\ref{lemma:1} and the aforementioned scaling of the asymptotic variances.
\end{proof}

\vspace{-2ex}
\section{Extensions of Theorem~\ref{th:Q-sigma}} \label{sec:Appendix_extension}
\vspace{-2ex}

As mentioned in Subsection~\ref{ssec:hist-estim}, there are different direct extensions of Theorem~\ref{th:Q-sigma}.
First, we extend the asymptotics of the sample quantile with the sample variance or sample MAD, respectively, to the general case of the r-th absolute central sample moment for any integer $r$. This is presented in Theorem~\ref{th:qn-abs-central-moment}.
Equally, we can consider a more general function $h(x,y)$ or, as e.g. in \cite{Bos13}, we can look at the joint distribution of a vector of sample quantiles, instead of only at one sample quantile. 
These latter two ideas are combined in Theorem~\ref{th:Q-sigma-functional-vectorversion}.
Note also that we could provide in the same way extensions of Theorem~\ref{thm:Q-MedianAD} and Propositions~\ref{prop-q_mu_sigma-general} and~\ref{prop-q_mu_sigma-MedianAD}.

Let us start with the joint bivariate asymptotics of the sample quantile and the r-th absolute central sample moment.\\[-3ex]
\begin{theorem} \label{th:qn-abs-central-moment}
Consider an iid sample with parent rv $X$ having existing (unknown) mean $\mu$ and variance $\sigma^2$.
Assume conditions $(C_1^{~'}), (P)$ at $q_X(p)$ each, $(M_r)$ for the correponding integer $r$, as well as $(P)$ at $\mu$ for $r=1$. 
Then the joint behaviour of the functions $h_1$ of the sample quantile $q_n(p)$ (defined in \eqref{eq:qn}), for $p \in (0,1)$, and $h_2$ of the r-th sample absolute central moment $\hat{m}(X,n,r)$ (defined in Table~\ref{tbl-notation}), is asymptotically normal:
\begin{equation*}
\sqrt{n} \, \begin{pmatrix} h_1(q_n (p)) - h_1(q_X(p)) \\ h_2(\hat{m}(X,n,r))  - h_2(m(X,r)) \end{pmatrix} \; \underset{n\to\infty}{\overset{d}{\longrightarrow}} \; \mathcal{N}(0, \Sigma^{(r)}), 
\end{equation*}
where the asymptotic covariance matrix $\displaystyle \Sigma^{(r)}=(\Sigma^{(r)}_{ij}, 1\le i,j\le 2)$ satisfies 
\begin{align*}
\Sigma^{(r)}_{11}&=\frac{p(1-p)}{f_X^2(q_X(p))} \,\left(h_1'(q_X(p))\right)^2 ; \quad  \Sigma^{(r)}_{22}=\left(h_2'(m(X,r))\right)^2 \, \Var\left(\lvert X - \mu \rvert^r -r (X-\mu) \E[(X-\mu)^{r-1} \sgn(X-\mu)^r] \right) ;  
\\  \Sigma^{(r)}_{12}&= \Sigma^{(r)}_{21} =
h_1'(q_X(p)) \,h_2'(m(X,r)) \times \frac{\tau_r (\lvert X - \mu \rvert,p) - r \E[(X-\mu)^{r-1} \sgn(X-\mu)^r] \tau_1 (p)}{f_X(q_X(p))},  
\end{align*}
$m(X,r)$ being defined in Table~\ref{tbl-notation} and $\tau_r$ in \eqref{eq:def-tau}.

The asymptotic correlation between the functional $h_1$ of  the sample quantile and the functional $h_2$ of the r-th absolute sample moment is - up to its sign $a_{\pm} = \sgn( \,h_1'(q_X(p)) \times h_2'(m(X,r)))$ - the same whatever the choice of $h_1,h_2$:
\begin{equation*} 
\lim_{n \rightarrow \infty} \Cor\left(h_1(q_n(p)),h_2( \hat{m}(X,n,r))\right) =  a_{\pm} \times \frac{\tau_r (\lvert X - \mu \rvert,p)  - r \E[(X-\mu)^{r-1} \sgn(X-\mu)^r] \tau_1 (p)}{\sqrt{ p(1-p) \Var\left(\lvert X - \mu \rvert^r -r (X-\mu) \E[(X-\mu)^{r-1} \sgn(X-\mu)^r] \right)}}.
\end{equation*} 
\end{theorem}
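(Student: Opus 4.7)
The plan is to follow closely the two-part structure of the proof of Theorem~\ref{th:Q-sigma} given in Appendix~\ref{sssec:appendix_bahadur}, replacing its ad hoc treatment of $r=1$ and $r=2$ by a single binomial linearisation valid for every integer $r\geq 1$. For the known-mean quantity $\tilde m(X,n,r)=\frac{1}{n}\sum_{i=1}^n|X_i-\mu|^r$, combining Ghosh's Bahadur representation~\eqref{eq:qn_Bahadur} of $q_n(p)$ with the bivariate CLT applied to the iid pair $\bigl(\1_{(X_i>q_X(p))},\,|X_i-\mu|^r\bigr)$ yields, under $(M_r)$, the joint asymptotic normality of $(q_n(p),\tilde m(X,n,r))$ with asymptotic covariance $\tau_r(|X-\mu|,p)/f_X(q_X(p))$ and marginal variance $\Var(|X-\mu|^r)$. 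This is verbatim Part~1 of the proof of Theorem~\ref{th:Q-sigma} with $r$ in place of $1$ or $2$.

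The novel step is a unified linearisation of $\hat m(X,n,r)$ in terms of $\tilde m(X,n,r)$ and $\bar X_n$. Since $\bar X_n\to\mu$ almost surely, on the event $\{|\bar X_n-\mu|<|X_i-\mu|\}$ one has $\sgn(X_i-\bar X_n)=\sgn(X_i-\mu)$, hence $|X_i-\bar X_n|=|X_i-\mu|-\sgn(X_i-\mu)(\bar X_n-\mu)$ and, by the binomial formula,
\[
|X_i-\bar X_n|^r \;=\; \sum_{k=0}^{r}\binom{r}{k}|X_i-\mu|^{r-k}\bigl(-\sgn(X_i-\mu)\bigr)^k(\bar X_n-\mu)^k.
\]
As $(\bar X_n-\mu)^k=O_P(n^{-k/2})$, the terms with $k\geq 2$ average to $o_P(n^{-1/2})$, while the $k=1$ coefficient $\frac{1}{n}\sum_i |X_i-\mu|^{r-1}\sgn(X_i-\mu)$ converges in probability, by the LLN, to $\kappa_r:=\E\bigl[(X-\mu)^{r-1}\sgn(X-\mu)^r\bigr]$ (using the identity $|x|^{r-1}\sgn(x)=x^{r-1}\sgn(x)^r$). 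One thus obtains the clean representation
\[
\hat m(X,n,r) \;=\; \tilde m(X,n,r) - r\,\kappa_r\,(\bar X_n-\mu) + o_P(n^{-1/2}),
\]
which generalises~\eqref{eq:rel_sigma_hat_tilde1}--\eqref{eq:rel_sigma_hat_tilde2} (case $r=2$, where $\kappa_2=0$) and~\eqref{eq:theta_representation} (case $r=1$, where $\kappa_1=1-2F_X(\mu)$).

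From here the conclusion is a Slutsky-type argument identical to Part~2 of the proof of Theorem~\ref{th:Q-sigma}. The joint CLT for the iid sums associated to $q_n(p)$ (through~\eqref{eq:qn_Bahadur}), $\tilde m(X,n,r)$ and $\bar X_n-\mu$, together with Ferguson's asymptotic covariance $\lim_n\Cov(\sqrt n\, q_n(p),\sqrt n\,\bar X_n)=\tau_1(p)/f_X(q_X(p))$ from~\cite{Ferguson99}, produces
\[
\Sigma^{(r)}_{12}=\frac{\tau_r(|X-\mu|,p)-r\,\kappa_r\,\tau_1(p)}{f_X(q_X(p))},\qquad \Sigma^{(r)}_{22}=\Var\bigl(|X-\mu|^r-r\,\kappa_r\,(X-\mu)\bigr),
\]
which are the stated expressions ($\Sigma^{(r)}_{11}$ is unchanged from Theorem~\ref{th:Q-sigma}). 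Promoting the result to general $h_1,h_2$ is the one-line bivariate Delta method as in Part~3 of the proof of Theorem~\ref{th:Q-sigma}, and the asymptotic correlation follows by division.

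The sole delicate point is justifying the binomial expansion near $X_i=\mu$, i.e.\ controlling the contribution of the complement of $\bigcap_i\{\sgn(X_i-\bar X_n)=\sgn(X_i-\mu)\}$. A crude bound gives a per-index bad-set error of order $(2|\bar X_n-\mu|)^r=O_P(n^{-r/2})$. For $r\geq 2$, multiplying by the trivial bound $n$ on the bad-set size and dividing by $n$ already yields $o_P(n^{-1/2})$, so no assumption on $f_X$ at $\mu$ is required. For $r=1$ this crude bound only gives $O_P(n^{-1/2})$, which is why the statement requires $(P)$ at $\mu$: this assumption yields $\P(|X-\mu|\leq|\bar X_n-\mu|)=O_P(n^{-1/2})$, so that the bad set has size $O_P(\sqrt n)$ and the total bad-set error becomes $O_P(1)$, hence $o_P(n^{-1/2})$ after division by $n$ (alternatively one may reinvoke the representation~\eqref{eq:theta_representation} directly). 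This is the main, and only real, obstacle; everything else is a direct generalisation of the arguments already carried out for $r=1,2$.
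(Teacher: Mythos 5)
Your proposal is correct, and its skeleton (Bahadur representation plus bivariate CLT for the known-mean case, linearisation of $\hat m(X,n,r)$ around $\tilde m(X,n,r)$ and $\bar X_n$, Ferguson's covariance $\tau_1(p)/f_X(q_X(p))$, then the Delta method) coincides with the paper's. Where you genuinely diverge is in how the key representation $\hat m(X,n,r)=\tilde m(X,n,r)-r\,\kappa_r(\bar X_n-\mu)+o_P(n^{-1/2})$ is established. The paper isolates this as Proposition~\ref{prop:Abs_central_mom_asympt} and proves it by a three-way case split: even $r$ via the classical central-moment asymptotics of \cite{Lehmann98}, $r=1$ via \cite{Segers14}, and odd $r>1$ by writing $|X_i-\bar X_n|^{2u+1}=(X_i-\bar X_n)^{2u}|X_i-\bar X_n|$, binomially expanding only the even power, and invoking a dedicated Lemma~\ref{lemma:segers_modif} (an adaptation of Segers' Lemma 2.1) to handle the sums $\tfrac1n\sum(X_i-\mu)^v(|X_i-\bar X_n|-|X_i-\mu|)$. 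You instead expand $|X_i-\bar X_n|^r$ directly after the sign stabilisation $\sgn(X_i-\bar X_n)=\sgn(X_i-\mu)$ and control the exceptional indices uniformly; note that your ``bad set'' is exactly the paper's set $\mathcal{K}_n$ of indices with $X_i$ between $\bar X_n$ and $\mu$, so the two arguments rest on the same device. Your version is more unified and makes the constant $\kappa_r=\E[(X-\mu)^{r-1}\sgn(X-\mu)^r]$ and its specialisations $\kappa_2=0$, $\kappa_1=1-2F_X(\mu)$ transparent; the paper's version outsources $r=1$ and even $r$ to the literature and confines the new work to odd $r$. One caveat: for $r=1$ your direct bad-set count needs $\P(|X-\mu|\le\delta)=O(\delta)$, i.e.\ a local \emph{upper} bound on the density near $\mu$, which is not literally what $(P)$ at $\mu$ (positivity) gives and is slightly stronger than the continuity condition $(C_0)$ at $\mu$ under which Segers' representation~\eqref{eq:theta_representation} holds; since you explicitly offer that representation as a fallback, this does not create a gap, but the cleanest statement of your unified proof would cite it for $r=1$ exactly as the paper does.
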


The main work in the theorem is to find the asymptotics of $\hat{m}(X,n,r)= \frac{1}{n} \sum_{i=1}^n \lvert X_i - \bar{X}_n \rvert^r$ for any integer $r\geq 1$. 
As such a result might be of interest in its own right, we  give it separately in Proposition~\ref{prop:Abs_central_mom_asympt}. To prove it, we need the following Lemma first which is an adaption from Lemma 2.1 in \cite{Segers14}.
\begin{lemma} \label{lemma:segers_modif}
Consider an iid sample with parent rv $X$. Then, for any integer $v \geq 1$, given that the $v$-th moment of $X$ exists, letting $n \rightarrow \infty$, it holds almost surely that
\begin{equation} \label{eq:segers_lemma_eq}
\frac{1}{n} \sum_{i=1}^n (X_i - \mu)^{v} \left( \lvert X_i - \bar{X}_n \rvert - \lvert X_i - \mu \rvert \right) = (\bar{X}_n - \mu) \Big( \E[(X - \mu )^v \sgn(\mu-X)] +o_P(1)\Big)+ o_P(1/\sqrt{n}).
\end{equation}
\end{lemma}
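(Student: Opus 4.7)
\textbf{Proof proposal for Lemma~\ref{lemma:segers_modif}.}

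The plan is to set $Y_i := X_i - \mu$ and $D_n := \bar{X}_n - \mu$, so that the left-hand side of \eqref{eq:segers_lemma_eq} becomes $\tfrac{1}{n}\sum_{i=1}^n Y_i^v \,(|Y_i - D_n| - |Y_i|)$. The key algebraic point to exploit is that whenever $Y_i$ and $Y_i - D_n$ have the same sign, one has exactly $|Y_i - D_n| - |Y_i| = -D_n\,\sgn(Y_i)$; the sign can only flip when $Y_i$ lies between $0$ and $D_n$, i.e.\ on the event $\{|Y_i| \leq |D_n|\}$. Thus I would write the pointwise decomposition
\[
|Y_i - D_n| - |Y_i| = -D_n\,\sgn(Y_i) + \epsilon_i,
\]
where $\epsilon_i = 0$ outside $\{|Y_i| \leq |D_n|\}$ and, on this event, a direct case check yields $|\epsilon_i| \leq 2|D_n|$.

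First I would handle the main (linear) term. Summing $-D_n\,\sgn(Y_i)$ against $Y_i^v$ produces
\[
-D_n \cdot \frac{1}{n}\sum_{i=1}^n Y_i^v\,\sgn(Y_i).
\]
Since the $v$-th moment of $X$ exists, $\E[|Y|^v] < \infty$, so the strong law of large numbers gives $\tfrac{1}{n}\sum_{i=1}^n Y_i^v\,\sgn(Y_i) \xrightarrow{a.s.} \E[Y^v\,\sgn(Y)] = -\E[(X-\mu)^v\,\sgn(\mu - X)]$. Therefore this term equals $D_n\,(\E[(X-\mu)^v \sgn(\mu-X)] + o_P(1))$, matching the leading part of the right-hand side of \eqref{eq:segers_lemma_eq}.

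Next I would control the remainder $\tfrac{1}{n}\sum Y_i^v \epsilon_i$. Combining $|\epsilon_i| \leq 2|D_n|\,\1_{(|Y_i|\leq |D_n|)}$ with the pointwise bound $|Y_i|^v\,\1_{(|Y_i|\leq |D_n|)} \leq |D_n|^v$, the whole remainder is dominated by $2|D_n|^{v+1}$. Because $D_n = O_P(n^{-1/2})$ by the central limit theorem (using the standing second-moment assumption ensured in each application), this gives $2|D_n|^{v+1} = O_P(n^{-(v+1)/2})$, which is $o_P(n^{-1/2})$ for every integer $v\geq 1$, exactly the rate required.

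The main obstacle I anticipate is being careful with the sign-casework so that one simultaneously exploits both $|\epsilon_i| \leq 2|D_n|$ \emph{and} $|Y_i|^v \leq |D_n|^v$ on the exceptional set $\{|Y_i|\leq |D_n|\}$: it is the combined gain of the two factors, producing $|D_n|^{v+1}$, that upgrades the naive bound of order $|D_n|$ (which would be insufficient) to the $o_P(n^{-1/2})$ precision the lemma requires. Once this two-sided bound on $\epsilon_i$ is in place, the rest of the argument reduces to the SLLN and the $\sqrt{n}$-rate of the sample mean.
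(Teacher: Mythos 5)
Your proposal is correct and follows essentially the same route as the paper: the exact identity $\lvert Y_i - D_n\rvert - \lvert Y_i\rvert = -D_n\,\sgn(Y_i)$ off the exceptional set where $Y_i$ lies between $0$ and $D_n$ (the paper's set $\mathcal{K}_n$), the SLLN applied to $\tfrac1n\sum (X_i-\mu)^v\sgn(\mu-X_i)$ for the leading term, and the combined bound $\lvert Y_i\rvert^v\,\lvert\epsilon_i\rvert\le 2\lvert D_n\rvert^{v+1}=O_P(n^{-(v+1)/2})=o_P(n^{-1/2})$ for the remainder. The only cosmetic difference is that the paper additionally invokes $\lvert\mathcal{K}_n\rvert/n\to 0$ a.s.\ from Segers, which your cruder bound $\lvert\mathcal{K}_n\rvert\le n$ renders unnecessary.
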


\begin{proof}
The proof relying heavily on the proof of Lemma 2.1 in \cite{Segers14}, we also use the notation of \cite{Segers14}. Namely we need the rv's $A_n :=\min(\bar{X}_n, \mu), B_n := \max(\bar{X}_n, \mu)$ as well as a partition of $\{ 1,...,n \} = \mathcal{K}_n \cup \mathcal{L}_n$ with
\vspace*{-0.2cm}
\begin{align*}
\mathcal{K}_n &:= \{i=1,...,n: A_n < X_i < B_n \},
\\ \mathcal{L}_n &:= \{i=1,...,n\} \backslash \mathcal{K}_n.
\end{align*}
As, for any $x \in \R \backslash (A_n, B_n)$, $(x - \mu)^{v} \left( \lvert x - \bar{X}_n \rvert - \lvert x - \mu \rvert \right) = (x - \mu)^{v}  (\bar{X}_n - \mu) \sgn(\mu-x)$,
we rewrite the left hand side of \eqref{eq:segers_lemma_eq} (ignoring the factor $1/n$ for the moment) as
\begin{align*}
\sum_{i=1}^n (X_i - \mu)^{v} \left( \lvert X_i - \bar{X}_n \rvert - \lvert X_i - \mu \rvert \right) &= (\bar{X}_n - \mu) \sum_{i \in \mathcal{L}_n} (X_i - \mu)^v \sgn(\mu-X_i) + \sum_{i \in \mathcal{K}_n} (X_i - \mu)^v \left( \lvert X_i - \bar{X}_n \rvert - \lvert X_i - \mu \rvert \right)
\\ &= (\bar{X}_n -\mu) \sum_{i=1}^n (X_i - \mu)^v \sgn(\mu-X_i) + \tilde{R}_n, \numberthis \label{eq:another_eq}
\end{align*}
\vspace*{-0.5cm}
\begin{align*}
\text{where~} \tilde{R}_n &:= \sum_{i \in \mathcal{K}_n} (X_i - \mu)^v \left( \lvert X_i - \bar{X}_n \rvert - \lvert X_i - \mu \rvert \right) - (\bar{X}_n - \mu) \sum_{i \in \mathcal{K}_n} (X_i - \mu)^v \sgn(\mu - X_i) 
\\ &= \sum_{i \in \mathcal{K}_n} (X_i - \mu)^v \Big( \lvert X_i - \bar{X}_n \rvert - \lvert X_i - \mu \rvert - (\bar{X}_n - \mu) \sgn(\mu-X_i) \Big).
\end{align*}
Note that, by construction, for $i \in \mathcal{K}_n$, it holds that $\lvert X_i - \mu \rvert \leq \lvert \bar{X}_n - \mu \rvert$. Further, for any $x \in \R$, we have that $\lvert \sgn(x) \rvert \leq 1$ and $\lvert \lvert x- \bar{X}_n \rvert - \lvert x - \mu \rvert \rvert \leq \lvert \bar{X}_n - \mu \rvert$.
Thus, we can bound $\tilde{R}_n$ as
\begin{align*}
\lvert \tilde{R}_n \rvert \leq \lvert \mathcal{K}_n \rvert \times \lvert \bar{X}_n - \mu \rvert^v \left( \lvert \bar{X}_n - \mu \rvert + \lvert \bar{X}_n - \mu \rvert \right) = 2 \lvert \mathcal{K}_n \rvert \lvert \bar{X}_n - \mu \rvert^{v+1}
\end{align*}
where, $\lvert \mathcal{K}_n \rvert$ denotes the cardinality of the set $\mathcal{K}_n$. It is shown in \cite{Segers14} that $\frac{\lvert \mathcal{K}_n \rvert}{n} \underset{n \rightarrow \infty} \rightarrow 0$ a.s. Since we know that $\sqrt{n} \lvert \bar{X}_n - \mu \rvert^{v+1} \underset{n \rightarrow \infty}{\overset{P} \rightarrow} 0$ for any integer $v \geq 1$, we have $\sqrt{n} \frac{1}{n} \lvert \tilde{R}_n \rvert \overset{P} \rightarrow 0$, i.e. $\frac{\tilde{R}_n}{n}= o_P(1/\sqrt{n})$.

Going back to the other term in \eqref{eq:another_eq}, and multiplying it by $ \frac{1}{n}$, we obtain by the strong law of large numbers
\[ \frac{1}{n} \sum_{i=1}^n (X_i - \mu)^v \sgn(\mu-X_i) \underset{n \rightarrow \infty}{\overset{a.s.}\rightarrow} \E[(X - \mu)^v \sgn(\mu-X)], \]
hence the result.
\end{proof}

Now we are ready to state the asymptotic relation between
the r-th absolute central sample moment with known and unknown mean, respectively.
\begin{proposition} \label{prop:Abs_central_mom_asympt}
Consider an iid sample with parent rv $X$. Then, for any integer $r \geq 1$, given that the $r$-th moment of $X$ exists, it holds that
\begin{equation*} 
\sqrt{n} \left(\frac{1}{n} \sum_{i=1}^n \lvert X_i - \bar{X}_n \rvert ^r \right) \underset{n \rightarrow \infty}{\sim} \sqrt{n} \left(\frac{1}{n} \sum_{i=1}^n \lvert X_i - \mu \rvert^r \right) - r \sqrt{n} (\bar{X}_n - \mu) \E[ (X - \mu)^{r-1} \sgn(X - \mu)^r] + o_P(1) .
\end{equation*}
\end{proposition}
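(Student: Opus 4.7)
The plan is to Taylor-expand $\lvert X_i - \bar{X}_n \rvert^r$ pointwise around $\lvert X_i - \mu \rvert^r$ and then aggregate, adapting the partition strategy behind Lemma~\ref{lemma:segers_modif}. For $r = 1$ the claim coincides with Lemma~\ref{lemma:segers_modif} applied with $v = 0$, so one may assume $r \geq 2$, in which case the moment hypothesis gives $\E[X^2] < \infty$ and hence $\sqrt{n}(\bar{X}_n - \mu) = O_P(1)$. I would split the index set $\{1,\ldots,n\} = \mathcal{K}_n \cup \mathcal{L}_n$, where $\mathcal{K}_n$ collects the indices with $X_i$ strictly between $\mu$ and $\bar{X}_n$. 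On $\mathcal{L}_n$ one has $\sgn(X_i - \bar{X}_n) = \sgn(X_i - \mu)$, giving the exact identity $\lvert X_i - \bar{X}_n \rvert = \lvert X_i - \mu \rvert - \sgn(X_i - \mu)(\bar{X}_n - \mu)$ and hence a clean binomial expansion of the $r$-th power.

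For $i \in \mathcal{L}_n$, using $\lvert X_i - \mu\rvert^{r-1} \sgn(X_i - \mu) = (X_i - \mu)^{r-1} \sgn(X_i - \mu)^r$, I would write
\[ \lvert X_i - \bar{X}_n \rvert^r - \lvert X_i - \mu \rvert^r = -r (\bar{X}_n - \mu)(X_i - \mu)^{r-1} \sgn(X_i - \mu)^r + \sum_{k=2}^r \binom{r}{k} (-\sgn(X_i-\mu))^k (\bar{X}_n - \mu)^k \lvert X_i - \mu \rvert^{r-k}. \]
Averaging over $i \in \mathcal{L}_n$, multiplying by $\sqrt{n}$, and invoking the SLLN on $\frac{1}{n}\sum_{i=1}^n (X_i-\mu)^{r-1} \sgn(X_i-\mu)^r$ (integrable since $\lvert X - \mu\rvert^{r-1}$ has finite expectation under the moment hypothesis) produces the target leading contribution $-r\sqrt{n}(\bar{X}_n - \mu)\,\E[(X-\mu)^{r-1}\sgn(X-\mu)^r] + o_P(1)$, provided three error pieces vanish.

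These three pieces are: (i) the $\mathcal{K}_n$-contribution to the full sum, bounded crudely by $2\sqrt{n}\lvert\bar{X}_n - \mu\rvert^r = (\sqrt{n}\lvert\bar{X}_n-\mu\rvert)\cdot \lvert\bar{X}_n - \mu\rvert^{r-1} = O_P(1)\cdot o_P(1) = o_P(1)$; (ii) the higher-order terms ($k\geq 2$) of the expansion, each contributing $\sqrt{n}(\bar{X}_n - \mu)^k \cdot \frac{1}{n}\sum_i \lvert X_i - \mu\rvert^{r-k}$, which factorises as $O_P(1)\cdot(\bar{X}_n-\mu)^{k-1}\cdot O_P(1) = o_P(1)$, the last factor being controlled by the SLLN since $\E[\lvert X - \mu\rvert^{r-k}]<\infty$ for $0\leq k \leq r$; and (iii) the difference between the $\mathcal{L}_n$-sum and the full sum in the leading term, bounded by $(\lvert\mathcal{K}_n\rvert/n)\lvert\bar{X}_n - \mu\rvert^{r-1}\to 0$ almost surely since $\lvert\mathcal{K}_n\rvert/n \to 0$ a.s. (as in the proof of Lemma~\ref{lemma:segers_modif}). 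The main obstacle will be the bookkeeping in step (ii) and the careful interplay between the $O_P(1)$ scaling of $\sqrt{n}(\bar{X}_n - \mu)$ and the vanishing factors $(\bar{X}_n - \mu)^{k-1}$; once these are cleanly managed, the proposition follows by collecting the main term.
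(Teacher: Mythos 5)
Your argument is correct, but it is organized quite differently from the paper's proof, so a comparison is worthwhile. The paper splits into three cases: even $r$ is dispatched by citing the classical central-moment expansion (Lehmann--Casella), $r=1$ by citing Segers, and only odd $r>1$ requires new work, done by writing $\lvert X_i-\bar X_n\rvert^{2u+1}=(X_i-\bar X_n)^{2u}\lvert X_i-\bar X_n\rvert$, binomially expanding the even factor, and then linearizing the remaining $\lvert X_i-\bar X_n\rvert$ via the auxiliary Lemma~\ref{lemma:segers_modif} (which is where the partition $\mathcal{K}_n\cup\mathcal{L}_n$ lives). You instead apply the partition directly to $\lvert X_i-\bar X_n\rvert^r$: on $\mathcal{L}_n$ the exact identity $\lvert X_i-\bar X_n\rvert=\lvert X_i-\mu\rvert-\sgn(X_i-\mu)(\bar X_n-\mu)$ gives a clean binomial expansion of the $r$-th power whose $k=1$ term is exactly the claimed correction, and the three error pieces are controlled by the same two facts the paper uses ($\lvert\mathcal{K}_n\rvert/n\to 0$ a.s.\ and $\sqrt{n}\,\lvert\bar X_n-\mu\rvert^{k}=O_P(1)\cdot o_P(1)$ for $k\ge 2$). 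What your route buys is uniformity: no parity distinction, no appeal to the even-moment literature, and no need for the intermediate lemma when $r\ge 2$; what the paper's route buys is that most cases reduce to citations and the genuinely new computation is confined to odd $r$. Your error estimates (i)--(iii) are all sound for $r\ge 2$.

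Two small points to tighten. First, for $r=1$ you invoke Lemma~\ref{lemma:segers_modif} with $v=0$, but the lemma is stated (and its remainder bound $\sqrt{n}\,\lvert\bar X_n-\mu\rvert^{v+1}\overset{P}{\to}0$ is proved) only for $v\ge 1$; for $v=0$ one must instead bound the remainder by the product $\bigl(\lvert\mathcal{K}_n\rvert/n\bigr)\cdot\sqrt{n}\,\lvert\bar X_n-\mu\rvert=o_P(1)\cdot O_P(1)$, and since the proposition assumes only a first moment when $r=1$, $\sqrt{n}(\bar X_n-\mu)$ need not be tight --- which is why the paper simply cites Segers' result for this case. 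Second, your ``exact identity'' on $\mathcal{L}_n$ fails at indices with $X_i=\mu$ (where $\sgn(X_i-\mu)=0$ but $\lvert X_i-\bar X_n\rvert=\lvert\bar X_n-\mu\rvert$); either absorb these indices into $\mathcal{K}_n$ or note that their total contribution is at most $\sqrt{n}\,\lvert\bar X_n-\mu\rvert^{r}=o_P(1)$ for $r\ge2$. Neither point affects the validity of the argument.
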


\begin{proof}{\bf of Propostion~\ref{prop:Abs_central_mom_asympt}}
We distinguish three different cases for $r$: Even integers $r$, $r=1$ and odd integers $r>1$.

\textit{Even integers $r$ - }
In such a case $\lvert X_i - \mu \rvert^r = (X_i - \mu)^r$ such that we can simply consider the known asymptotics for central moments.
E.g. in \cite{Lehmann98}, Example 5.2.7, they conclude that for any even integer $r>1$
\begin{equation} \label{eq:Lehmann2}
\hspace*{-1cm} \sqrt{n} \left(\frac{1}{n} \sum_{i=1}^n (X_i - \bar{X}_n)^r - \E[(X- \mu)^r] \right)  \underset{n \rightarrow \infty}{\sim} \sqrt{n} \left(\frac{1}{n} \sum_{i=1}^n (X_i - \mu)^r - \E[(X- \mu)^r] \right) - r \sqrt{n} (\bar{X}_n - \mu) \left( \frac{1}{n} \sum_{i=1}^n (X_i - \mu)^{r-1} \right) + o_P(1). 
\end{equation}
\textit{Case $r=1$ - }
This case is known too. E.g. in \cite{Segers14}, we can see that if $\E[X] < \infty$ and $F_X$ is continuous at $\mu$, it holds that, as $n \rightarrow \infty$, almost surely,
\begin{equation} \label{eq:theta_representation2}
\sqrt{n} \left(\frac{1}{n} \sum_{i=1}^n \lvert X_i - \bar{X}_n \rvert - \E[\lvert X -\mu \rvert] \right) = \sqrt{n} \left(\frac{1}{n} \sum_{i=1}^n \lvert X_i - \mu  \rvert - \E[\lvert X -\mu \rvert] \right) + \sqrt{n}(2 F_X(\mu) -1)  (\bar{X}_n - \mu) + o_P(1).
\end{equation}
%
\textit{Odd integer $r>1$ - }
This is the only case requiring some work. Set $r=2u+1$, for any integer $u\geq 1$.

By binomial expansion we get
\begin{align*}
 \sum_{i=1}^n \lvert X_i - \bar{X}_n \rvert^{2u+1} &= \sum_{i=1}^n ( X_i - \bar{X}_n )^{2u} \lvert X_i - \bar{X}_n \rvert
=  \sum_{k=0}^{2u} (-1)^k \binom{2u}{k} (\bar{X}_n - \mu)^k \left( \sum_{i=1}^n (X_i - \mu)^{2u-k} \lvert X_i - \bar{X}_n \rvert \right)
\\ &=  \sum_{i=1}^n \lvert X_i - \bar{X}_n \rvert (X_i - \mu)^{2u}  -2u (\bar{X}_n - \mu)  \sum_{i=1}^n  \lvert X_i - \bar{X}_n \rvert (X_i - \mu)^{2u-1} 
\\ & \quad \quad + \sum_{k=2}^{2u}  (-1)^k \binom{2u}{k} (\bar{X}_n - \mu)^k \left( \sum_{i=1}^n (X_i - \mu)^{2u-k} \lvert X_i - \bar{X}_n \rvert \right). \numberthis \label{eq:temp_eq}
\end{align*}
Recall that, for the asymptotics, we need to multiply \eqref{eq:temp_eq} by $\sqrt{n} \frac{1}{n}$. Following the analogous argumentation as in \cite{Lehmann98} for even integers, we conclude that all terms in \eqref{eq:temp_eq} apart from the first two ($k=0,1$) vanish as $\sqrt{n} (\bar{X}_n -\mu)^v \overset{P}\rightarrow 0$ for $v\geq 2$.
Hence, we are left with the analysis of the first two terms of \eqref{eq:temp_eq}. 
\\ For the first two terms, multiplied by $1/n$, we have, as $n\rightarrow \infty$,
\begin{align*} 
 \frac{1}{n} \sum_{i=1}^n (X_i - \mu)^{2u} \lvert X_i - \bar{X}_n \rvert &= \frac{1}{n} \sum_{i=1}^n \left( \lvert X_i - \mu \rvert^{2u+1} + (X_i - \mu)^{2u} \left( \lvert X_i - \bar{X}_n \rvert - \lvert X_i - \mu \rvert \right) \right)
\\&= \frac{1}{n} \sum_{i=1}^n \lvert X_i - \mu \rvert^{2u+1} + (\bar{X}_n - \mu) \Big( \E[(X - \mu )^{2u} \sgn(\mu-X)] +o_P(1) \Big)+ o_P(\frac{1}{\sqrt{n}}), \numberthis \label{eq:term1}
\end{align*}
\vspace*{-1cm}
\begin{align*}
\text{and~} \frac{1}{n} (-2u) (\bar{X}_n - \mu) &\sum_{i=1}^n (X_i - \mu)^{2u-1} \lvert X_i - \bar{X}_n \rvert 
\\ &=  \frac{1}{n} (-2u) (\bar{X}_n - \mu) \sum_{i=1}^n \left( (X_i - \mu)^{2u} \sgn(X_i - \mu)  
+  (X_i - \mu)^{2u-1} \left(\lvert X_i - \bar{X}_n \rvert - \lvert X_i - \mu \rvert \right) \right).
\\ &\overset{a.s.}{=} -2u(\bar{X}_n - \mu) \E[(X - \mu )^{2u} \sgn(X-\mu)]  \numberthis \label{eq:term2}
\\ &-2u(\bar{X}_n - \mu) \left( (\bar{X}_n - \mu) (\E[(X - \mu )^{2u-1} \sgn(\mu-X)] +o_P(1))+ o_P(\frac{1}{\sqrt{n}})  \right), 
\end{align*}
applying Lemma~\ref{lemma:segers_modif} and the law of large numers in \eqref{eq:term1} and \eqref{eq:term2}.  Putting \eqref{eq:term1} and \eqref{eq:term2} together, we get from \eqref{eq:temp_eq}, for $n \rightarrow \infty$, that a.s.
\begin{align*}
\frac{1}{n} \sum_{i=1}^n \lvert X_i - \bar{X}_n \rvert^{2u+1} &= \frac{1}{n} \sum_{i=1}^n \lvert X_i - \mu \rvert^{2u+1} - (2u+1) (\bar{X}_n - \mu) \Big(\E[(X - \mu )^{2u} \sgn(X-\mu)] +o_P(1)\Big) + o_P(\frac{1}{\sqrt{n}})
\\ &-2u(\bar{X}_n - \mu) \left( (\bar{X}_n - \mu) \Big(\E[(X - \mu )^{2u-1} \sgn(\mu-X)] +o_P(1)\Big)+ o_P(\frac{1}{\sqrt{n}})  \right) .
\end{align*}
As $\sqrt{n} (\bar{X}_n - \mu)^2 \overset{P}\rightarrow 0$, we can conclude to the following asymptotic equivalence:
\begin{equation} \label{eq:abs_central_sample_mom_uneven}
\sqrt{n} \frac{1}{n} \sum_{i=1}^n \lvert X_i - \bar{X}_n \rvert^{2u+1} \underset{n \rightarrow \infty}{\sim} \sqrt{n} \frac{1}{n} \sum_{i=1}^n \lvert X_i - \mu \rvert^{2u+1} - (2u+1) \sqrt{n} (\bar{X}_n - \mu) \E[(X - \mu )^{2u} \sgn(X-\mu)]. 
\end{equation}
%
{\it A statement for any integer $r$.}
To conclude, we can summarize the different cases in~\eqref{eq:Lehmann2},~\eqref{eq:theta_representation2} and~\eqref{eq:abs_central_sample_mom_uneven} as follows. For any integer $r \geq 1$, it holds 
\begin{equation*}
\hspace*{-.3cm} \sqrt{n} \left(\frac{1}{n} \sum_{i=1}^n \lvert X_i - \bar{X}_n \rvert ^r \right)  \underset{n \rightarrow \infty}{\sim} \sqrt{n} \left(\frac{1}{n} \sum_{i=1}^n \lvert X_i - \mu \rvert^r \right) - r \sqrt{n} (\bar{X}_n - \mu) \E[(X - \mu)^{r-1} \sgn(X-\mu)^r] + o_P(1). 
\end{equation*}
\end{proof}

\begin{proof}{\bf of Theorem~\ref{th:qn-abs-central-moment}}
Recall from Part 1 of the proof of Theorem~\ref{th:Q-sigma}, that 
\[ \lim_{n \rightarrow \infty} \Cov(\sqrt{n} q_n(p), \sqrt{n} \tilde{m}(X,n,r) ) = \frac{\tau_r(\lvert X - \mu \rvert,p)}{f_X(q_X(p))}. \]
Further, from \cite{Ferguson99}, we know that $\lim_{n \rightarrow \infty} \Cov(\sqrt{n}~q_n(p), \sqrt{n} (\bar{X}_n - \mu) ) = \frac{\tau_1(p)}{f_X(q_X(p))}.$
Hence, using Proposition~\ref{prop:Abs_central_mom_asympt},
we can write
\begin{align*}
\Sigma_{12}^{(r)} = \Sigma_{21}^{(r)} &=  \lim_{n \rightarrow \infty} \Cov(\sqrt{n} ~q_n(p), \sqrt{n}~ \hat{m}(X,n,r) ) 
\\ &=  \lim_{n \rightarrow \infty} \Cov(\sqrt{n} ~q_n(p),  \sqrt{n}~ \tilde{m}(X,n,r) - r \sqrt{n} (\bar{X}_n - \mu) \E[ (X - \mu)^{r-1} \sgn(X - \mu)^r] + o_P(1) ) 
\\ &= \frac{\tau_r(\lvert X - \mu \rvert,p)}{f_X(q_X(p))} - \frac{r  \E[ (X - \mu)^{r-1} \sgn(X - \mu)^r] \tau_1(p)}{f_X(q_X(p))}.
\end{align*}
$\Sigma_{11}^{(r)}$ remains unchanged in comparison to Theorem~\ref{th:Q-sigma}, and the variance of $\hat{m}(X,n,r)$ follows directly from Proposition~\ref{prop:Abs_central_mom_asympt}:
\begin{align*}
\Sigma_{22}^{(r)} &= \lim_{n \rightarrow \infty} \Var(\sqrt{n} ~\hat{m}(X,n,r)) 
\\ &=\lim_{n \rightarrow \infty} \Var( ~\sqrt{n} \tilde{m}(X,n,r) - r \sqrt{n} (\bar{X}_n - \mu) \E[ (X - \mu)^{r-1} \sgn(X - \mu)^r] + o_P(1) ) )
\\ &= \Var( \lvert X - \mu \rvert^r - r \E[ (X - \mu)^{r-1} \sgn(X - \mu)^r] (X- \mu)),
\end{align*}
which concludes the proof of the theorem.
\end{proof}

The second extension considers a more general function $h(x,y)$ and a vector of sample quantiles, instead of only one sample quantile. For this, denote by $\textbf{q}_X(\textbf{p})$ the m-vector of quantiles evaluated at $p_i, i=1,...,m$, where $0<p_1 <...< p_m<1$, and by $\textbf{q}_n(\textbf{p})$ the corresponding $m$-vector of sample quantiles $q_n(p_i), i=1,...,m$. Recall that $z^{t}$ denotes the transpose of a vector $z$.

We state the theorem for $r=1,2$ but it could be formulated for any integer $r>0$ as for Theorem~\ref{th:qn-abs-central-moment}.
\begin{theorem} \label{th:Q-sigma-functional-vectorversion}
Consider an iid sample with parent rv $X$ having existing (unknown) mean $\mu$ and variance $\sigma^2$. Further, consider a function $h: \R^{m+1} \mapsto \R^{m+1}$, i.e. $h(x_1,...,x_m,y) = \begin{pmatrix}
h_1 (x_1,...,x_m,y) \\... \\ h_{m+1}(x_1,...,x_m,y)
\end{pmatrix} $ with continuous real-valued components $h_i(x_1,...,x_m,y), i=1,...,m$, and existing partial derivatives denoted by $\partial_i h_j, i,j \in \{1,...,m+1\}$. Assume conditions $(C_1^{~'}), (P)$ at 
$q_X(p_i), i=1,...,m$ each, and $(M_r)$ for $r=1,2$, respectively, as well as $(P)$ at $\mu$ for $r=1$. 
Then, the joint behaviour of the functional $h$ of the sample quantile vector $\textbf{q}_n(\textbf{p})$ and of the measure of dispersion $\hat{m}(X,n,r)$ (defined in Table~\ref{tbl-notation}) is asymptotically normal:
\begin{equation*} 
\sqrt{n} \, h \begin{pmatrix} \textbf{q}_n (\textbf{p}) \\ \hat{m}(X,n,r) \end{pmatrix} -  h \begin{pmatrix}
\textbf{q}_X(\textbf{p}) \\  m(X,r) \end{pmatrix} \; \underset{n\to\infty}{\overset{d}{\longrightarrow}} \; \mathcal{N}(0, J(h\begin{pmatrix} \textbf{q}_n (\textbf{p}) \\ \hat{m}(X,n,r) \end{pmatrix}) \Sigma^{(m,r)} J( h\begin{pmatrix} \textbf{q}_n (\textbf{p}) \\ \hat{m}(X,n,r) \end{pmatrix})^t), 
\end{equation*}
where $ J( h\begin{pmatrix} \textbf{q}_n (\textbf{p}) \\ \hat{m}(X,n,r) \end{pmatrix})$ is the Jacobian matrix of $h\begin{pmatrix} \textbf{q}_n (\textbf{p}) \\ \hat{m}(X,n,r) \end{pmatrix}$ and the asymptotic covariance matrix $\Sigma^{(m,r)}$ of dimension $(m+1) \times (m+1)$ can be written as 
\begin{equation*} \label{eq:cov_matrix-q-sigma-vector}
 \Sigma^{(m,r)}= \begin{bmatrix} \Sigma^{(m)} & \textbf{s}(X,r) \\ \textbf{s}(X,r)^t  &\Var\Big( (\lvert X - \mu \rvert^r + (2-r)(2F_X(\mu)-1)X \Big) \end{bmatrix}
\end{equation*}
with $\Sigma^{(m)}_{ij} = \Sigma^{(m)}_{ji} = \frac{p_i (1-p_j)}{f_X(q_X(p_i)) f_X(q_X(p_j))}$ for $i,j \in \{ 1,...,m\}$ and the i-th element of the m-vector $\textbf{s}(X,r)$ being
\\ $\frac{\tau_r (\lvert X - \mu \rvert,p_i) +(2-r)(2F_X(\mu)-1) \tau_1 (p_i)}{f_X(q_X(p_i))}, i=1,...,m$, where $\tau_r$ is defined in \eqref{eq:def-tau}.
\end{theorem}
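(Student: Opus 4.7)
My plan is to obtain this result as a fairly direct extension of Theorem~\ref{th:Q-sigma}, with the same three ingredients: (i) the Bahadur representation of each sample quantile, (ii) the iid-sum representation of $\hat{m}(X,n,r)$ with unknown mean that already appears in Part 2 of the proof of Theorem~\ref{th:Q-sigma}, and (iii) the multivariate Central Limit Theorem, followed at the end by the multivariate Delta method to introduce $h$. The extension from $m=1$ to general $m$ is essentially bookkeeping, since the joint asymptotic normality of a vector of sample quantiles is a classical consequence of Bahadur's representation applied coordinatewise.

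First, for each $i=1,\dots,m$, under $(C_1^{~'})$ and $(P)$ at $q_X(p_i)$, I would invoke the Bahadur representation~\eqref{eq:qn_Bahadur} to write
\begin{equation*}
q_n(p_i) - q_X(p_i) = \frac{(1-F_n(q_X(p_i))) - (1-p_i)}{f_X(q_X(p_i))} + R_{n,p_i}, \qquad R_{n,p_i}=o_P(n^{-1/2}).
\end{equation*}
On the dispersion side, I would use the relation derived in Part~2 of the proof of Theorem~\ref{th:Q-sigma} — namely \eqref{eq:rel_sigma_hat_tilde1}--\eqref{eq:rel_sigma_hat_tilde2} for $r=2$ and \eqref{eq:theta_representation} for $r=1$ — combining them into the single identity
\begin{equation*}
\hat{m}(X,n,r) - m(X,r) = \tilde{m}(X,n,r) - m(X,r) + (2-r)(2F_X(\mu)-1)(\bar{X}_n-\mu) + o_P(n^{-1/2}).
\end{equation*}
At this stage, everything relevant has been reduced to an $(m+1)$-dimensional iid sum of the random vector whose first $m$ coordinates are $\mathbf{1}_{(X_k>q_X(p_i))}$ and whose last coordinate is $|X_k-\mu|^r + (2-r)(2F_X(\mu)-1)X_k$. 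An application of the multivariate CLT, followed by left-multiplication by $\mathrm{diag}(1/f_X(q_X(p_1)),\dots,1/f_X(q_X(p_m)),1)$ and use of Slutsky's theorem to absorb the $R_{n,p_i}$ and the $o_P(n^{-1/2})$ term, delivers the joint asymptotic normality of $\sqrt{n}(\textbf{q}_n(\textbf{p})-\textbf{q}_X(\textbf{p}),\hat{m}(X,n,r)-m(X,r))$ with the covariance matrix $\Sigma^{(m,r)}$.

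Then I would identify the entries of $\Sigma^{(m,r)}$. The off-diagonal quantile block is the classical one: for $i\le j$ one has $\Cov(\mathbf{1}_{(X>q_X(p_i))},\mathbf{1}_{(X>q_X(p_j))}) = \min(p_i,p_j)-p_ip_j = p_i(1-p_j)$, which after division by $f_X(q_X(p_i))f_X(q_X(p_j))$ gives exactly $\Sigma^{(m)}_{ij}$. The quantile--dispersion cross-terms forming $\textbf{s}(X,r)$ are computed exactly as in the derivation of \eqref{eq:cov-hist-general2} in Theorem~\ref{th:Q-sigma}, applied separately for each $p_i$; the bottom-right entry coincides with $\Sigma^{(r)}_{22}$ from \eqref{eq:cov-hist-general1}. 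Finally, for the functional version, I would apply the multivariate Delta method at the point $(\textbf{q}_X(\textbf{p}),m(X,r))$: the Jacobian $J(h)$ exists by assumption, and the asymptotic covariance of $\sqrt{n}\{h(\textbf{q}_n(\textbf{p}),\hat{m}(X,n,r)) - h(\textbf{q}_X(\textbf{p}),m(X,r))\}$ is then $J(h)\,\Sigma^{(m,r)}\,J(h)^t$, as claimed.

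There is no real obstacle beyond that; the only step requiring care is the simultaneous handling of the remainder terms $(R_{n,p_1},\dots,R_{n,p_m})$ and the $o_P(n^{-1/2})$ contribution coming from the dispersion decomposition, which must be discarded jointly via Slutsky, but this is routine once the representation has been set up coordinatewise. Note also that the restriction to $r\in\{1,2\}$ in the statement is only for simplicity: combining the above with Proposition~\ref{prop:Abs_central_mom_asympt} would yield the analogous result for any integer $r\ge 1$, just as in Theorem~\ref{th:qn-abs-central-moment}.
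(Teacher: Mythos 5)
Your proposal is correct and follows exactly the route the paper intends: the paper gives no separate proof of Theorem~\ref{th:Q-sigma-functional-vectorversion}, presenting it as a direct extension of Theorem~\ref{th:Q-sigma} via coordinatewise Bahadur representations, the multivariate CLT, and the multivariate Delta method, which is precisely what you lay out (including the correct identification $\Cov(\1_{(X>q_X(p_i))},\1_{(X>q_X(p_j))})=p_i(1-p_j)$ for $i\le j$ and the reuse of the cross-covariance computation from \eqref{eq:cov-hist-general2}). No gaps.
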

As a corollary of the theorem, we can state how the result explicitly looks like if we go back to the one-dimensional sample quantile case, considering $r=1,2$, as in Theorem~\ref{th:Q-sigma} but with a general function $h(x,y)$.
\begin{corollary}
Consider an iid sample with parent rv $X$ having existing (unknown) mean $\mu$, variance $\sigma^2$ and a function $h(x,y) = \begin{pmatrix}
h_1 (x,y) \\ h_2(x,y)
\end{pmatrix} $ with continuous real-valued components $h_1(x,y), h_2(x,y)$ and existing partial derivatives denoted by $\partial_i h_j, i,j \in \{1,2\}$. Assume conditions $(C_1^{~'}), (P)$ at 
$q_X(p)$ each, and $(M_r)$ for $r=1,2$ respectively as well as $(P)$ at $\mu$ for $r=1$.
Then, the joint behaviour of the functional $h$ of the sample quantile $q_n(p)$ (for $p \in (0,1)$) and of the measure of dispersion $\hat{m}(X,n,r)$ (defined in Table~\ref{tbl-notation}) is asymptotically normal:
$$
\sqrt{n} \, h\begin{pmatrix} q_n (p)  \\ \hat{m}(X,n,r)   \end{pmatrix} -  h\begin{pmatrix} \hat{q}_X (p)  \\ m(X,r)   \end{pmatrix} \; \underset{n\to\infty}{\overset{d}{\longrightarrow}} \; \mathcal{N}(0, \Sigma^{(h,r)}), 
$$
where the asymptotic covariance matrix $\displaystyle \Sigma^{(h,r)}=(\Sigma^{(h,r)}_{ij}, 1\le i,j\le 2)$ satisfies, denoting by abuse of notation, $ h_1 = h_1 (q_X(p),m(X,r))$ and $h_2 = h_2 (q_X(p),m(X,r))$,
\begin{align*}
\Sigma^{(h,r)}_{11}&=\Var(q_n (p))\, \left( \partial_1 h_1 \right)^2+ 2  \partial_1 h_1 \partial_2 h_1 \Cov(q_n (p),\hat{m}(X,n,r)) + \Var(\hat{m}(X,n,r))\, \left( \partial_2 h_1 \right)^2;  
\\ \Sigma^{(h,r)}_{22}&=\Var(\hat{m}(X,n,r))\, \left( \partial_2 h_2 \right)^2+ 2  \partial_1 h_2 \partial_2 h_2 \Cov(q_n (p),\hat{m}(X,n,r)) + \Var(q_n (p))\, \left( \partial_1 h_2 \right)^2; 
\\ \Sigma^{(h,r)}_{12} & = \Sigma^{(h,r)}_{21} 
\\ &= \Cov(q_n (p),\hat{m}(X,n,r)) \left( \partial_1 h_1 \partial_2 h_2 + \partial_2 h_1 \partial_1 h_2 \right)
+ \Var(q_n (p))\, \partial_1 h_1 \partial_1 h_2  + \Var(\hat{m}(X,n,r))\, \partial_2 h_1 \partial_2 h_2,  
\end{align*}
\vspace*{-1cm}
\begin{align*}
\quad  \text{with~} \quad \Var(q_n (p)) &= \frac{p(1-p)}{f_X^2(q_X(p))}, \quad   \Var(\hat{m}(X,n,r) = \Var(\lvert X - \mu \rvert^r +(2-r)(2F_X(\mu)-1)X), \text{~and~}
\\ \Cov(q_n (p),\hat{m}(X,n,r)) &=  \frac{\tau_r (\lvert X - \mu \rvert,p) +(2-r)(2F_X(\mu)-1) \tau_1 (p)}{f_X(q_X(p))}.
\end{align*}
The asymptotic correlation between the functional $h$ of the measure of dispersion and the sample quantile can be deduced from the above expressions.
In the specific case of having $\partial_2 h_1 = \partial_1 h_2 =0$, it is identical -up to its sign - whatever the choice of $h$ (under that restriction), namely
$$
\lim_{n \rightarrow \infty} \Cor\left(h\left(q_n(p), \hat{m}(X,n,r) \right)\right) =  \frac{\tau_r (\lvert X - \mu \rvert,p)  +(2-r)(2F(\mu)-1) \tau_1 (p)}{\sqrt{\Var(\lvert X - \mu \rvert^r) p(1-p)}}  \times \sgn(\partial_1 h_1 \partial_2 h_2).
$$
\end{corollary}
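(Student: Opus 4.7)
The plan is to obtain this corollary as a direct application of the bivariate Delta method to Theorem~\ref{th:Q-sigma}. First, I would apply Theorem~\ref{th:Q-sigma} with $h_1, h_2$ being the identity functions, which yields
\[
\sqrt{n} \begin{pmatrix} q_n(p) - q_X(p) \\ \hat{m}(X,n,r) - m(X,r) \end{pmatrix} \overset{d}{\longrightarrow} \mathcal{N}(0,\Sigma^{(r)}),
\]
where the entries of $\Sigma^{(r)}$ are $V_q := \Var(q_n(p)) = p(1-p)/f_X^2(q_X(p))$, $V_m := \Var(\hat{m}(X,n,r)) = \Var(|X-\mu|^r + (2-r)(2F_X(\mu)-1)X)$, and $C := \Cov(q_n(p),\hat{m}(X,n,r)) = (\tau_r(|X-\mu|,p) + (2-r)(2F_X(\mu)-1)\tau_1(p))/f_X(q_X(p))$ (in the asymptotic sense, suppressing the limit $\lim_{n\to\infty}\sqrt{n}\cdots\sqrt{n}$ for readability).

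Next, I would invoke the bivariate Delta method for the function $h:\mathbb{R}^2\to\mathbb{R}^2$ whose Jacobian at the point $(q_X(p), m(X,r))$ is
\[
J = \begin{pmatrix} \partial_1 h_1 & \partial_2 h_1 \\ \partial_1 h_2 & \partial_2 h_2 \end{pmatrix}.
\]
Under the stated moment and smoothness conditions on $X$ (together with the existence of the partial derivatives, as required), this yields
\[
\sqrt{n}\,\bigl(h(q_n(p),\hat{m}(X,n,r)) - h(q_X(p),m(X,r))\bigr) \overset{d}{\longrightarrow} \mathcal{N}(0,\,J\,\Sigma^{(r)}\,J^{t}),
\]
so $\Sigma^{(h,r)} = J\,\Sigma^{(r)}\,J^{t}$.

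The remaining work is the purely mechanical computation of this matrix product. Multiplying out, the $(1,1)$ entry gives $V_q(\partial_1 h_1)^2 + 2\,\partial_1 h_1\,\partial_2 h_1\,C + V_m(\partial_2 h_1)^2$, the $(2,2)$ entry gives the symmetric expression with $h_2$ in place of $h_1$, and the $(1,2)$ entry gives $V_q\,\partial_1 h_1\,\partial_1 h_2 + C(\partial_2 h_1\,\partial_1 h_2 + \partial_1 h_1\,\partial_2 h_2) + V_m\,\partial_2 h_1\,\partial_2 h_2$, exactly matching the stated formulae once $V_q, V_m, C$ are replaced by their explicit expressions from Theorem~\ref{th:Q-sigma}.

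For the final assertion on the correlation under the restriction $\partial_2 h_1 = \partial_1 h_2 = 0$, the Jacobian becomes diagonal, so $\Sigma^{(h,r)}_{11} = V_q(\partial_1 h_1)^2$, $\Sigma^{(h,r)}_{22} = V_m(\partial_2 h_2)^2$ and $\Sigma^{(h,r)}_{12} = C\,\partial_1 h_1\,\partial_2 h_2$. Dividing and pulling out $|\partial_1 h_1\,\partial_2 h_2|$ from the denominator gives the $\sgn(\partial_1 h_1\,\partial_2 h_2)$ factor and the quotient $C/\sqrt{V_q V_m}$, which is precisely the asymptotic correlation of Theorem~\ref{th:Q-sigma}. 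There is no real obstacle here: the entire proof is a two-step application of Theorem~\ref{th:Q-sigma} and the Delta method; the only point requiring attention is to ensure the derivatives exist at the evaluation point $(q_X(p),m(X,r))$, which is built into the corollary's hypotheses (and as noted in Section~\ref{ssec:discussion}, existence only at the evaluation point suffices).
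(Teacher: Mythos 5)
Your proposal is correct and follows essentially the same route as the paper: the corollary is obtained by applying the multivariate Delta method with the full $2\times 2$ Jacobian of $h$ to the base bivariate normality of $\bigl(q_n(p),\hat{m}(X,n,r)\bigr)$ with covariance matrix $\Sigma^{(r)}$ (the paper phrases this as the $m=1$ specialisation of Theorem~\ref{th:Q-sigma-functional-vectorversion}, you start from Theorem~\ref{th:Q-sigma} with identity functions, but the computation $J\,\Sigma^{(r)}J^{t}$ is identical and your entries match). Your derivation of the correlation as $C/\sqrt{V_q V_m}$ also correctly tracks \eqref{eq:cor-hist-general}, which incidentally shows that the denominator in the corollary's final display should read $\sqrt{\Var\bigl(\lvert X-\mu\rvert^r+(2-r)(2F_X(\mu)-1)X\bigr)\,p(1-p)}$ rather than $\sqrt{\Var(\lvert X-\mu\rvert^r)\,p(1-p)}$.
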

\end{appendices}
\end{document}